\documentclass{amsart}
\usepackage{amscd,amsmath,amssymb,amsfonts}
\usepackage[cmtip, all]{xy}
\usepackage[OT2,T1]{fontenc}
\newcommand\textcyr[1]{{\fontencoding{OT2}\fontfamily{wncyr}\selectfont #1}}

\newtheorem{thm}{Theorem}[section]
\newtheorem{prop}[thm]{Proposition}
\newtheorem{lem}[thm]{Lemma}
\newtheorem{cor}[thm]{Corollary}

\renewcommand{\theclaim}{\kern-3pt}

\newtheorem{IntroThm}{Theorem}
\newtheorem{IntroProp}[IntroThm]{Proposition}

\newtheorem{IntroConj}[IntroThm]{Conjecture}

\theoremstyle{definition}
\newtheorem{Def}[thm]{Definition}

\theoremstyle{remark}
\newtheorem{rem}[thm]{Remark}
\newtheorem{rems}[thm]{Remarks}
\newtheorem{IntroRem}{Remark}
\newtheorem{IntroRems}[IntroRem]{Remarks}
\newtheorem{ex}[thm]{Example}

\numberwithin{equation}{section}

\newcommand{\sA}{{\mathcal A}}
\newcommand{\sB}{{\mathcal B}}
\newcommand{\sC}{{\mathcal C}}

\newcommand{\sE}{{\mathcal E}}
\newcommand{\sF}{{\mathcal F}}
\newcommand{\sG}{{\mathcal G}}
\newcommand{\sH}{{\mathcal H}}

\newcommand{\sK}{{\mathcal K}}

\newcommand{\sO}{{\mathcal O}}

\newcommand{\sR}{{\mathcal R}}
\newcommand{\sS}{{\mathcal S}}
\newcommand{\sT}{{\mathcal T}}

\newcommand{\sX}{{\mathcal X}}

\newcommand{\A}{{\mathbb A}}

\newcommand{\F}{{\mathbb F}}
\newcommand{\G}{{\mathbb G}}

\newcommand{\N}{{\mathbb N}}
\renewcommand{\P}{{\mathbb P}}
\newcommand{\Q}{{\mathbb Q}}
\newcommand{\R}{{\mathbb R}}
\newcommand{\mS}{{\mathbb S}}

\newcommand{\Z}{{\mathbb Z}}

\renewcommand{\phi}{\varphi}

\newcommand{\codim}{{\rm codim}}

\newcommand{\Hom}{{\rm Hom}}
\newcommand{\End}{{\rm End}}

\newcommand{\im}{{\rm im}}
\newcommand{\Spec}{\operatorname{Spec}}

\newcommand{\Char}{\operatorname{char}}
\newcommand{\Tr}{{\rm Tr}}
\newcommand{\Gal}{{\rm Gal}}

\newcommand{\0}{\emptyset}
\newcommand{\sHom}{{\mathcal{H}{om}}}

\newcommand{\id}{{\operatorname{id}}}

\newcommand{\op}{{\text{\rm op}}}
\newcommand{\<}{\mathopen<}
\renewcommand{\>}{\mathclose>}

\newcommand{\del}{\partial}

\renewcommand{\max}{{\operatorname{\rm max}}}

\newcommand{\Spt}{{\mathbf{Spt}}}
\newcommand{\Spc}{{\mathbf{Spc}}}
\newcommand{\Sm}{{\mathbf{Sm}}}

\newcommand{\hocolim}{\mathop{{\rm hocolim}}}
\renewcommand{\lim}{\operatornamewithlimits{\varprojlim}}
\newcommand{\colim}{\operatornamewithlimits{\varinjlim}}

\newcommand{\Ho}{{\mathbf{Ho}}}

\newcommand{\fin}{{\operatorname{\rm fin}}}
\newcommand{\SH}{{\operatorname{\sS\sH}}}

\newcommand{\eff}{{\mathop{eff}}}

\newcommand{\DM}{{DM}}
\newcommand{\GW}{\operatorname{GW}}

\newcommand{\Nis}{{\operatorname{Nis}}}

\newcommand{\ds}{{/\kern-3pt/}}

\newcommand{\hofib}{{\mathop{\rm{hofib}}}}

\newcommand{\et}{{\operatorname{\acute{e}t}}}

\newcommand{\Mod}{{\operatorname{Mod}}}

\newcommand{\s}{\tilde{s}}

\newcommand{\Tate}{\text{Tate}}
\renewcommand{\:}{\kern-1.5pt:\kern-1.5pt}

\newcommand{\Fil}{\operatorname{Fil}}
\newcommand{\gr}{\text{gr}}
\newcommand{\cd}{\operatorname{cd}}
\newcommand{\cfin}{{\operatorname{coh. fin}}}
\newcommand{\trdim}{\text{tr.\,dim}}

\begin{document}

\title{Convergence of Voevodsky's slice tower}
\author{Marc Levine}
\address{Universit\"at Duisburg-Essen\\
Fakult\"at Mathematik, Campus Essen\\
45117 Essen\\
Germany}
\email{marc.levine@uni-due.de}
\thanks{Research supported by the Alexander von Humboldt Foundation}

\keywords{Morel-Voevodsky
stable homotopy category, slice filtration, motivic homotopy theory}

\subjclass[2000]{Primary 14C25, 19E15; Secondary 19E08 14F42, 55P42}
 
\renewcommand{\abstractname}{Abstract}
\begin{abstract}  
We consider Voevodsky's slice tower for a finite spectrum $\sE$ in the motivic stable homotopy category over a perfect field $k$. In case $k$ has finite cohomological dimension (in characteristic two, we also require that $k$ is infinite), we show that the slice tower converges, in that the induced filtration on the bi-graded homotopy sheaves  $\Pi_{a,b}f_n\sE$ is finite, exhaustive and separated at each stalk. This partially verifies a conjecture of Voevodsky. 
 \end{abstract}
\date{\today}
\maketitle
\tableofcontents

\section*{Introduction}  We continue our investigation, begun in \cite{LevineGW}, of the slice filtration on the bi-graded homotopy sheaves $\Pi_{*,*}(\sE)$ for objects $\sE$ in the motivic stable homotopy category $\SH(k)$. We refer the reader to \S\ref{sec:Background} for the notation to be used in this introduction.

Let $k$ be a perfect field, let $\SH(k)$ denote Voevodsky's motivic stable homotopy category of $T$-spectra over $k$, $\SH$ the classical stable homotopy category of spectra. For a spectrum $E\in\SH$, the  Postnikov (pre)tower of $E$, 
\[
\ldots\to E^{(n+1)}\to E^{(n)}\to \ldots\to E
\]
consists of the $n-1$-connected covers $E^{(n)}\to E$ of $E$, that is, $\pi_mE^{(n)}\to \pi_mE$ is an isomorphism for $m\ge n$ and  $\pi_mE^{(n)}=0$ for $m<n$. Sending $E$ to $E^{(n)}$ defines a functor from $\SH$ to the full subcategory $\Sigma^n\SH^\eff$ of $n-1$-connected spectra that is right adjoint to the inclusion  $\Sigma^n\SH^\eff\to \SH$.

Replacing  $\Sigma^n\SH^\eff$ with a certain triangulated subcategory $\Sigma^n\SH_T^\eff(k)$ of $\SH(k)$ that measures a kind of ``$\P^1$-connectedness'' (in a suitable sense, see \cite{VoevOpen, VoevSlice}, \cite{Pelaez, PelaezCR}) or \S\ref{sec:SliceTower} of this paper), Voevodsky has defined a motivic analog of the Postnikov tower; for  an object $\sE$ of $\SH(k)$ this yields the {\em Tate-Postnikov tower} (or slice tower)
\[
\ldots\to f_{n+1}\sE\to f_n\sE\to\ldots\to \sE
\]
for $\sE$. For integers $a,b$, we have the stable  homotopy sheaf $\Pi_{a,b}(\sE)$, defined as the Nisnevich sheaf associated to the presheaf
\[
U\in\Sm/k\mapsto [\Sigma^a_{S^1}\Sigma^b_{\G_m}\Sigma^\infty_TU_+,\sE]_{\SH(k)}
\]
(note that the indexing is not the standard one). The Tate-Postnikov tower for $\sE$ gives rise to the filtration
\[
\Fil^n_\Tate\Pi_{a, b}(\sE):=\im(\Pi_{a,b}f_n\sE\to \Pi_{a, b}\sE).
\]

Let $\SH_\fin(k)\subset \SH(k)$ be the thick subcategory of $\SH(k)$ generated by the objects $\Sigma_T^n\Sigma^\infty_TX_+$, with $X$ smooth and projective over $k$, $n\in\Z$. For example, the motivic sphere spectrum $\mS_k:=\Sigma^\infty_T\Spec k_+$ is in $\SH_\fin(k)$.
 
Voevodsky has stated the following conjecture:
\begin{IntroConj}[\hbox{\cite[conjecture 13]{VoevOpen}}]\label{Conj:Main} Let $k$ be a perfect field. Then for  $\sE\in \SH_\fin(k)$, the Tate-Postnikov tower of $\sE$ is convergent in the following sense: for all $a,b,n\in\Z$, one has
\[
\cap_mF^m_\Tate\Pi_{a,b}f_n\sE=0.
\]
\end{IntroConj}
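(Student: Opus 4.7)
The plan is to reduce the convergence assertion to a connectivity estimate for the stages of the slice tower, and then to translate that sheaf-level statement into the stalkwise finiteness asserted in the theorem. Since $\SH_\fin(k)$ is, by definition, the thick triangulated subcategory of $\SH(k)$ generated by the spectra $\Sigma^n_T\Sigma^\infty_T X_+$ with $X$ smooth projective and $n\in\Z$, and since each $f_n$ is an exact functor of triangulated categories, the class of $\sE$ for which Conjecture~\ref{Conj:Main} holds is closed under cofiber sequences, retracts, and $T$-suspensions. It therefore suffices to treat $\sE=\Sigma^\infty_T X_+$ with $X$ smooth projective over $k$; for such an $\sE$ we have $\sE\in\SH^\eff(k)$, so $f_m\sE=\sE$ for $m\le 0$, which handles exhaustiveness of the induced filtration.

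The core of the argument will be a connectivity bound $f_m\sE\in\SH(k)_{\ge m}$ in Morel's homotopy $t$-structure, yielding vanishing of the Nisnevich sheaf $\Pi_{a,b}f_m\sE$ for $m>a$. The inclusion $\Sigma^m_T\SH^\eff(k)\subset\SH(k)_{\ge m}$ follows from Morel's stable $\A^1$-connectivity theorem combined with the fact that smashing with $\G_m$ preserves connectivity in Morel's $t$-structure; it is precisely here that the hypothesis on the base field in characteristic two enters. Given this, the finite cohomological dimension of $k$ is used to upgrade sheaf-level vanishing to the finer stalkwise statement: for a point $x$ with residue field $F$ one has $\cd(F)\le\cd(k)+\trdim(F/k)<\infty$, and the induced filtration on $(\Pi_{a,b}f_n\sE)_x$ is controlled by a bounded-length Nisnevich descent spectral sequence whose $E_2$-page involves Galois cohomology of $F$ with coefficients in the homotopy sheaves of the slices $s_j\sE=\cofib(f_{j+1}\sE\to f_j\sE)$ for $n\le j<m$.

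The main obstacle is the passage from qualitative connectivity to quantitative stalkwise finiteness. Morel's theorem gives vanishing of $\Pi_{a,b}f_m\sE$ as a Nisnevich sheaf for $m>a$, which already yields the separatedness asserted in Conjecture~\ref{Conj:Main}; however, to conclude that the induced filtration is \emph{finite} at each stalk one must show that contributions from the slices die out uniformly at each Henselian stalk after finitely many steps. Carrying this out requires a careful identification of the homotopy sheaves of the slices $s_j\sE$ as objects of a suitable category of homotopy modules, together with finite-cohomological-dimension descent estimates at residue fields of points of $\Sm/k$, and this is the technical heart of what must be established.
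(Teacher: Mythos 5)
The statement you are trying to prove is a \emph{conjecture} that the paper does not prove — in fact, the paper explicitly refutes it. Immediately after stating Conjecture~\ref{Conj:Main}, the introduction records (crediting Igor Kriz) that the conjecture \emph{as stated is false}: taking $\sE=\mS_k/\ell$ for a prime $\ell\neq2$ and $k=\R$, one computes $F^n_\Tate\Pi_{0,0}\sE(\R)=\im\bigl(I(\R)^n\to\GW(\R)/\ell\bigr)=\Z/\ell$ for all $n\ge1$, so $\cap_m F^m_\Tate\Pi_{0,0}\sE\neq0$. No proof of the stated conjecture can therefore be correct. What the paper actually proves (Theorem~\ref{thm:Main}) carries additional hypotheses — $k$ of finite cohomological dimension, inverting the exponential characteristic — and asserts stalkwise \emph{finiteness} rather than sheaf-level separatedness.

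Two specific steps in your argument fail, and it is instructive to see exactly how the counterexample defeats them. First, your opening reduction — from general $\sE\in\SH_\fin(k)$ to $\sE=\Sigma^\infty_T X_+$, using that $f_n$ is exact and $\SH_\fin(k)$ is thick — is precisely the kind of ``triangulated'' argument the paper warns against (``The convergence property is thus not a `triangulated' one in general''). Separatedness of the \emph{image} filtration $F^*_\Tate$ is not inherited along cofiber sequences: the Moore spectrum counterexample is literally the cofiber of $\ell:\mS_k\to\mS_k$, and separatedness for $\mS_k$ does not pass to $\mS_k/\ell$. Second, the claimed inclusion $\Sigma^m_T\SH^\eff(k)\subset\SH(k)_{\ge m}$ is false. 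The left-hand side is a localizing, hence triangulated, subcategory and is therefore closed under the desuspension $[-1]$, whereas $\SH(k)_{\ge m}$ is not; no nonzero triangulated subcategory can sit inside a bounded-below piece of a $t$-structure. Although the \emph{generators} $\Sigma^q_T\Sigma^\infty_TX_+$ ($q\ge m$) do satisfy $\Pi_{a,b}=0$ for $a<m$, the subcategory they generate does not. Indeed, if your inclusion held, then $\Pi_{a,b}f_m\sE=0$ for $m>a$ would force $F^m_\Tate\Pi_{a,b}f_n\sE=0$ for $m>a$ over an arbitrary perfect field, again contradicting the $k=\R$, $\sE=\mS_k/\ell$ example. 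The paper's argument instead bounds things only after inverting $p$, under a finite cohomological dimension assumption, by combining the homotopy coniveau tower with Milnor--Witt $K$-theory and Bloch--Kato, precisely because the naive connectivity estimate is unavailable.
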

The cases $\sE=\Sigma^q_{\G_m}\mS_k$, $a=n=0$ gives some evidence for this conjecture, as we shall now explain. 

For $k$ a perfect field, Morel has given a natural isomorphism of 
$\Pi_{0,-p}(\mS_k)$ with the {\em Milnor-Witt sheaf} $\sK^{MW}_{p}$; this is a certain sheaf on $\Sm/k$ with value on each field $F$ over $k$ given by the Milnor-Witt group $K^{MW}_{p}(F)$.\footnote{A presentation of the graded ring $K^{MW}_*(F)$ may be found in \cite[definition 3.1]{MorelA1}.}  For   $F$ a field,  $K^{MW}_{0}(F)$ is canonically isomorphic to the Grothendieck-Witt group $\GW(F)$ of non-degenerate symmetric bilinear forms over $F$ \cite[lemma 3.10]{MorelA1}.  More generally, Morel has constructed a natural isomorphism\footnote{This follows from  \cite[theorem 6.13, theorem 6.40]{MorelA1}, using the argument of \cite[theorem 6.43]{MorelA1}.} for $p, q\in\Z$
\[
\Pi_{0,p}(\Sigma^q_{\G_m}\mS_k)\cong \sK^{MW}_{q-p}.   
\]

The isomorphism $K^{MW}_{0}(F)\cong \GW(F)$ makes $K^{MW}_*(F)$ a $\GW(F)$-module; let $I(F)\subset \GW(F)$ denote the augmentation ideal. Our main result of {\it loc. cit.} is 
\begin{IntroThm}[\hbox{\cite[theorem 1]{LevineGW}}]\label{IntroThm:LevineGW} Let $F$ be a perfect field extension of $k$ of characteristic $\neq2$. Then 
\[
\Fil^n_\Tate\Pi_{0,p}(\Sigma^q_{\G_m}\mS_k)(F)= I(F)^MK^{MW}_{q-p}(F)\subset K^{MW}_{q-p}(F)=\Pi_{0,p}(\Sigma^q_{\G_m}\mS_k)(F)
\]
where $M=0$ if $n\le p$ or $n\le q$, and $M=\min(n-p, n-q)$ if $n\ge p$ and $n\ge q$.
\end{IntroThm}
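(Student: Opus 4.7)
The strategy is to induct on $n$, using the fundamental triangle of the slice tower $f_{n+1}\sE \to f_n\sE \to s_n\sE$ applied to $\sE := \Sigma^q_{\G_m}\mS_k$. Write $N := q-p$, so $\Pi_{0,p}(\sE)(F) = K^{MW}_N(F)$ by Morel's theorem.

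The two trivial cases $M = 0$ have distinct justifications. When $n\le q$, one checks that $\sE = \Sigma^q_{\G_m}\mS_k$ lies in the subcategory $\Sigma^n\SH_T^\eff(k)$, using the decomposition $T\simeq S^1\wedge\G_m$; hence $f_n\sE = \sE$ and the filtration equals the full group $K^{MW}_N(F) = I^0 K^{MW}_N(F)$. When $n\le p$, one instead shows that $\Pi_{0,p}(\sE/f_n\sE) = 0$, so that $\Pi_{0,p}(f_n\sE)\to\Pi_{0,p}(\sE)$ is surjective; this follows from a motivic connectivity argument applied to the slices $s_0\sE, \dots, s_{n-1}\sE$ appearing in the truncation $\sE/f_n\sE$, invoking Morel's $\A^1$-connectivity theorem.

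The inductive step assumes $n \ge \max(p,q)$. By the $S^1$--$\G_m$ duality coming from $T\simeq S^1\wedge\G_m$, one may reduce to the case $p\le q$, where $M = n-q$. Applying $\Pi_{0,p}$ to the slice triangle yields the exact sequence
\[
\Pi_{0,p}(f_{n+1}\sE)\to \Pi_{0,p}(f_n\sE)\to \Pi_{0,p}(s_n\sE).
\]
Since the slice functor is compatible with $\G_m$-suspension, $s_n(\Sigma^q_{\G_m}\mS_k)\simeq \Sigma^q_{\G_m}s_{n-q}(\mS_k)$, so $\Pi_{0,p}(s_n\sE)(F)$ is governed by the slices of the sphere spectrum. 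In the range needed, these slices are known to be motivic Eilenberg-MacLane spectra with coefficients in the classical stable stems $\pi^s_*(\mS)$; combined with Voevodsky's proof of the Milnor conjecture, which identifies $I^m(F)/I^{m+1}(F)\cong K^M_m(F)/2$ for $F$ of characteristic $\neq 2$, this computation should match $\Pi_{0,p}(s_n\sE)(F)$ with the graded piece $I^{n-q}K^{MW}_N(F)/I^{n-q+1}K^{MW}_N(F)$ of the $I$-adic filtration on Milnor--Witt K-theory.

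The main obstacle is showing that the image of $\Pi_{0,p}(f_n\sE)\to \Pi_{0,p}(\sE)$ is \emph{exactly} $I^{n-q}K^{MW}_N(F)$ rather than merely contained in it. Containment follows inductively once one knows that the slice contribution $\Pi_{0,p}(s_n\sE)$ is annihilated by $I$. The reverse inclusion, namely surjectivity onto each graded piece $I^m/I^{m+1}$, is the technical heart: it requires constructing enough explicit elements in the image, and here the multiplicative structure of the slice tower together with Morel's presentation of $K^{MW}_*$ via Milnor symbols and Pfister forms is essential. The characteristic $\neq 2$ hypothesis enters twice: to invoke the Milnor conjecture, and to permit the decomposition of $\sK^{MW}$ into its Milnor K-theory and Witt-ring pieces, which can then be analyzed separately.
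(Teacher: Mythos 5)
This theorem is cited verbatim from \cite{LevineGW}; the present paper does not reprove it but does recall, in \S\ref{sec:HCT}--\ref{sec:Bottom}, the machinery used there: the homotopy coniveau model $E^{(q)}(X)$ for the Tate--Postnikov tower, the resulting simplicial filtration $\Fil^{simp}_*$, and the transfer maps $\Tr^*_{F(w)/F}$ built from Morel--Voevodsky purity, which together identify the bottom of the filtration with the Milnor--Witt filtration $F^n_{MW^{\Tr}}$. Your plan is a genuinely different route, organized around the abstract slice triangle $f_{n+1}\sE\to f_n\sE\to s_n\sE$ and the computation of the slices of $\mS_k$. As a route it is attractive but, as written, it has gaps that go beyond technical polish.

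First, the reduction: the stated ``$S^1$--$\G_m$ duality coming from $T\simeq S^1\wedge\G_m$'' does not give a reduction to $p\le q$. The slice filtration is defined asymmetrically via $T$-suspensions, and the honest reduction available is Lemma~\ref{lem:DegreeShift}, which lets you trade $\Sigma^q_{\G_m}$ and a shift in $n$ for each other; this collapses the two-variable formula to a one-variable one but does not eliminate the case distinction in the formula for $M$. You need to argue both cases separately.

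Second, the slices of $\mS_k$ are not ``motivic Eilenberg--MacLane spectra with coefficients in the classical stable stems''; they are motivic EM spectra whose coefficients are the $E_2$-page of the motivic Adams--Novikov (or May) spectral sequence (R\"ondigs--Spitzweck--\O{}stv\ae{}r), a much more involved object. That identification also postdates the theorem you are trying to reprove, and more importantly does not hand you $I^{n-q}/I^{n-q+1}$ directly: you would still need a nontrivial computation matching $\Pi_{0,p}(s_n\sE)(F)$ to the Milnor K-theory / $I$-adic graded pieces via the Milnor conjecture, and you would need to know the relevant portion of the slice spectral sequence has no differentials. Your plan never addresses degeneration, and without it the images of $\Pi_{0,p}(f_n\sE)\to\Pi_{0,p}(\sE)$ can be strictly smaller than the $E_1$-page suggests.

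Third, you correctly flag the exact identification of the image as the crux, but ``constructing enough explicit elements'' is a placeholder, not an argument. This is precisely what the cited proof supplies through the homotopy coniveau model: lemmas \ref{lem:Homotopy1}--\ref{lem:generic} give geometric control over the bottom simplicial layer, and the transfer/purity construction of \S\ref{sec:Bottom} produces the required classes in $F^n_{MW^{\Tr}}$. If you want to bypass that machinery with the slice spectral sequence alone, you need a concrete mechanism producing elements deep in the filtration; the action of $\theta_1([u])\in\Pi_{0,-1}\mS_k$ on the tower gives one inclusion, but the other inclusion — showing the filtration is no larger than the $I$-adic one — is exactly where the coniveau/moving-lemma analysis is doing the work, and nothing in your outline replaces it.
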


The following consequence of theorem~\ref{IntroThm:LevineGW} gives some evidence for Voevodsky's convergence conjecture:
\begin{IntroProp}\label{IntroProp:converge}  Let $k$ be a perfect field with $\Char k\neq2$.  For all $p,q\ge0$, and all perfect field extensions $F$ of $k$, we have
\[
\cap_nF^n_\Tate\Pi_{0,p}\Sigma_{\G_m}^q\mS_k(F)=0.
\]
\end{IntroProp}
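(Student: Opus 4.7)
The plan is to combine Theorem~\ref{IntroThm:LevineGW} with the Arason--Pfister Hauptsatz, which, for any field $F$ with $\Char F\ne 2$, gives $\bigcap_{M\ge 0} I(F)^M = 0$ in $W(F)$: any element of the intersection is represented by an anisotropic form lying in every $I^M(F)$, but the Hauptsatz forces its dimension to be at least $2^M$ for all $M$, so it must vanish.

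First I apply Theorem~\ref{IntroThm:LevineGW}: for $n\ge\max(p,q)$ one has
\[
\Fil^n_\Tate\Pi_{0,p}(\Sigma^q_{\G_m}\mS_k)(F) = I(F)^{n-\max(p,q)}\,K^{MW}_{q-p}(F),
\]
and the exponent $n-\max(p,q)$ tends to $\infty$ with $n$. Writing $r:=q-p$, the problem reduces to showing $\bigcap_{M\ge 1} I(F)^M\,K^{MW}_r(F) = 0$. I then split into cases according to the sign of $r$. If $r\ge 0$, I use Morel's description of $K^{MW}_r(F)$ as the pullback $K^M_r(F)\times_{K^M_r(F)/2}I^r(F)$, on which $\alpha\in GW(F)=K^{MW}_0(F)$ acts componentwise, via the rank map $GW(F)\to K^M_0(F)=\Z$ on the first factor and via the natural surjection $GW(F)\to W(F)$ on the second. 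Since $I(F)$ is precisely the kernel of the rank map, it annihilates $K^M_r(F)$, so for $M\ge 1$ any element of $I(F)^M K^{MW}_r(F)$ has zero projection to $K^M_r(F)$ and projection lying in $I^{r+M}(F)$ on the $I^r(F)$-factor. Taking the intersection over $M$ and applying $\bigcap_M I^{r+M}(F)=0$ yields $0$. For $r<0$, one has $K^{MW}_r(F)=W(F)$ as a $GW(F)$-module via $GW(F)\to W(F)$, so $I(F)^M K^{MW}_r(F)$ equals $I(F)^M\subset W(F)$, and the Hauptsatz finishes the case.

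The main obstacle is really bookkeeping: one must verify that Morel's pullback identification is compatible with the $GW(F)$-module structure in the two factors as claimed (so that $I(F)$ acts as zero on $K^M_r$ and as the fundamental ideal on $W$). Once this is set up the calculation is immediate, with the Hauptsatz doing the real work.
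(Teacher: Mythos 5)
Your proof is correct and follows essentially the same route as the paper: reduce via Theorem~\ref{IntroThm:LevineGW} to separatedness of the $I(F)$-adic filtration on $K^{MW}_{q-p}(F)$, split on the sign of $q-p$ using Morel's fiber-product description for non-negative degree and $K^{MW}_m(F)\cong W(F)$ for negative degree, and finish with Arason--Pfister. The only cosmetic difference is that you spell out the componentwise $\GW(F)$-module structure on the pullback to derive the identity $I(F)^nK^{MW}_m(F)=I(F)^{n+m}\subset W(F)$, which the paper simply records directly.
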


\begin{proof} In light of theorem~\ref{IntroThm:LevineGW},  the assertion is that the $I(F)$-adic filtration on $K^{MW}_{q-p}(F)$ is separated. By \cite[th\'eor\`eme 5.3]{MorelWitt}, for $m\ge0$, $K^{MW}_m(F)$ fits into a cartesian square of $\GW(F)$-modules
\[
\xymatrix{
K^{MW}_m(F)\ar[r]\ar[d]&K^M_m(F)\ar[d]^{Pf}\\
I(F)^m\ar[r]_-q&I(F)^m/I(F)^{m+1},
}
\]
where $K^M_m(F)$ is the Milnor $K$-group, $q$ is the quotient map and $Pf$ is the map sending a symbol $\{u_1,\ldots,u_m\}$ to the class of the Pfister form $\<\<u_1,\ldots,u_m\>\>$ mod $I(F)^{m+1}$.  For $m<0$, $K^{MW}_m(F)$ is isomorphic to the Witt group $W(F)$  of $F$,  that is, the quotient of $\GW(F)$ by the ideal generated by the hyperbolic form $x^2-y^2$. Also, the map $\GW(F)\to W(F)$ gives an isomorphism of $I(F)^r$ with its image in $W(F)$ for all $r\ge1$. Thus, for $n\ge1$, 
\[
I(F)^nK^{MW}_m(F)=\begin{cases}I(F)^n\subset W(F)&\text{ for }m<0\\
I(F)^{n+m}\subset  W(F)&\text{ for }m\ge0.
\end{cases}
\]
The fact that $\cap_nI(F)^n=0$ in $W(F)$ is a theorem of Arason and Pfister \cite[Korollar 1]{ArasonPfister}.
\end{proof}

\begin{IntroRems} 1. The proof in  \cite{MorelWitt} that $K^{MW}_m(F)$ fits into a cartesian square as above relies on the Milnor conjecture.\\
2. As pointed out to me by Igor Kriz, Voevodsky's convergence conjecture in the generality as stated above is false. In fact, take $\sE$ to be the Moore spectrum $\mS_k/\ell$ for some prime $\ell\neq 2$. Since $\Pi_{a,q}\mS_k=0$ for $a<0$,   proposition~\ref{prop:Vanishing1} below shows that $\Pi_{a,q}f_n\mS_k=0$ for $a<0$, and  thus we have the right exact sequence for all $n\ge0$
\[
\Pi_{0,0}f_n\mS_k\xrightarrow{\times \ell} \Pi_{0,0}f_n\mS_k\to  \Pi_{0,0}f_n\sE\to0.
\]
In particular, we have
\[
F^n_\Tate\Pi_{0,0}\sE(k)=im\left(F^n_\Tate\Pi_{0,0}\mS_k(k)\to \Pi_{0,0}\mS_k(k)/\ell\right)=im\left(I(k)^n\to \GW(k)/\ell\right).
\]
Take $k=\R$. Then $\GW(\R)=\Z\oplus\Z$, with virtual rank and virtual index giving the two factors. The augmentation ideal $I(\R)$ is thus isomorphic to $\Z$ via the index and it is not hard to see that $I(\R)^{n}=(2^{n-1})\subset\Z=I(\R)$. Thus $\Pi_{0,0}\sE=\Z/\ell\oplus \Z/\ell$ and the filtration $F^n_\Tate\Pi_{0,0}\sE$  is constant, equal to $\Z/\ell=I(\R)/\ell$, and is therefore not separated.

The convergence property is thus not a ``triangulated" one in general, and therefore seems to be a subtle one. However, if the $I$-adic filtration on $\GW(F)$ is finite for all finitely generated $F$ over $k$ (possibly of varying length depending on $F$), then the augmentation ideal in $\GW(F)$ is  two-primary torsion. Our computations (at least in characteristic zero) show that the filtration $F^*_\Tate\Pi_{0,p}\Sigma^\infty_T\G_m^{\wedge q}$ is in this case at least locally finite, and thus has better triangulated properties. In particular, for $\ell\neq2$,
\[
\Pi_{0,0}(\mS_k/\ell)=\Z/\ell,\ F^n_\Tate\Pi_{0,0}(\mS_k/\ell)=0\text{ for }n>0.
\]
One can therefore ask if Voevodsky's convergence conjecture is true if one assumes the finiteness of the $I(F)$-adic filtration on $\GW(F)$ for all finitely generated fields $F$ over $k$. The main theorem of this paper is a partial answer to the convergence question along these lines. 
\end{IntroRems}

 \begin{IntroThm}\label{IntroThm:Main} Let $k$ be a perfect field  of finite cohomological dimension and let $p$ denote the exponential characteristic.\footnote{That is,  $p=\Char k$ if $\Char k>0$, $p=1$ if $\Char k=0$.}   Take $\sE$ in  $\SH_\fin(k)$ and take $x\in X\in\Sm/k$ with $X$ irreducible. Let $d=\dim_kX$.  Then for every $r,q\in\Z$, there is an integer $N=N(\sE, r,d,q)$   such that 
\[
(\Fil_\Tate^n\Pi_{r,q}\sE)_x[1/p]=0
\]
for all $n\ge N$.  In particular, if $F$ is a field extension of $k$ of finite transcendence dimension $d$ over $k$, then $\Fil_\Tate^n\Pi_{r,q}\sE(F)[1/p]=0$ for all $n\ge N$. 
 \end{IntroThm}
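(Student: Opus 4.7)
The approach is to reduce to a single generator of $\SH_\fin(k)$, translate the filtration bound into vanishing of individual slices, and then exploit the description of the slices via motivic cohomology together with the finite cohomological dimension hypothesis.

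First, $\SH_\fin(k)$ is by definition the thick subcategory generated by the objects $\Sigma_T^m\Sigma_T^\infty Y_+$ with $Y/k$ smooth projective and $m\in\Z$. Since the functors $f_n$ are triangulated and the filtration $\Fil^n_\Tate$ on $\Pi_{r,q}$ is compatible with distinguished triangles and retracts, a five-lemma argument applied stalkwise reduces the claim to proving the existence of a suitable $N$ for each such generator, with the dependence on $Y$ and $m$ absorbed into the dependence of $N$ on $\sE$. The cofiber sequences $f_{n+1}\sE\to f_n\sE\to s_n\sE$ then give long exact sequences on stalks of $\Pi_{r,q}$; combined with proposition~\ref{prop:Vanishing1} to kill the initial stages below the natural connectivity of $\sE$, it suffices to show that, after inverting $p$, $(\Pi_{r,q} s_n\sE)_x$ vanishes for all $n$ larger than some bound $N(\sE, r, d, q)$, and then to telescope across the remaining slices.

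For $\sE = \Sigma_T^m\Sigma_T^\infty Y_+$ with $Y$ smooth projective, the slices $s_n\sE$ are identified, after inverting the exponential characteristic $p$, with $\Sigma_T^n HM_n$ for certain effective Voevodsky motives $M_n$ built from $Y$; this combines Voevodsky's slice computations, Pelaez's construction, and subsequent work removing the characteristic hypothesis away from $p$. Under this identification, $(\Pi_{r,q}s_n\sE)_x[1/p]$ becomes a motivic cohomology group of the local ring $\sO_{X,x}$, in a bidegree shifted by $n$, with coefficients in $M_n[1/p]$. Finite cohomological dimension of $k$, together with $\dim_k X = d$, produces a vanishing line for such motivic cohomology groups whose bidegree range is bounded in terms of $\cd k$ and $d$; since the relevant bidegree shifts linearly with $n$, these groups vanish for $n\gg 0$.

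The main obstacle is this last step: producing a uniform, effective motivic-cohomological vanishing statement for stalks at points of smooth $k$-schemes of dimension $d$ that accounts both for $\cd k$ and for the growth of the coefficient motives $M_n$ from the slices of the compact object $\sE$. Making $N$ depend only on $(\sE, r, d, q)$ and not on the particular point $x$ requires this vanishing to be uniform in $x$ once $d$ is fixed, which is where the structure of the slice functors on $\SH_\fin(k)$ and the finite cohomological dimension hypothesis must be carefully combined. The need to invert $p$ throughout is structural, coming from the current state of the identification of slices with motivic cohomology in positive characteristic.
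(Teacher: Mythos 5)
The proposal contains a fundamental gap at the stage you call ``telescope across the remaining slices.'' Showing that $(\Pi_{r,q}s_n\sE)_x[1/p]=0$ for all $n\ge N$ does \emph{not} imply that $(\Fil^n_\Tate\Pi_{r,q}\sE)_x[1/p]=0$ for $n\ge N$. The long exact sequences coming from $f_{n+1}\sE\to f_n\sE\to s_n\sE$ only give that the maps $\Pi_{r,q}f_{n+1}\sE\to\Pi_{r,q}f_n\sE$ become isomorphisms once $n\ge N$ (granting vanishing of $\Pi_{r,q}s_n\sE$ and $\Pi_{r+1,q}s_n\sE$), and hence that the filtration $\Fil^n_\Tate\Pi_{r,q}\sE=\im(\Pi_{r,q}f_n\sE\to\Pi_{r,q}\sE)$ \emph{stabilizes}, not that it becomes zero. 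The difference between stabilization and vanishing is precisely the content of the convergence question: it amounts to showing that the stalk of $\Pi_{r,q}$ of the homotopy limit $\holim_n f_n\sE$ maps to zero in $\Pi_{r,q}\sE$, and that is what the theorem asserts, so the telescope argument is circular. The counterexample discussed in the Introduction (the Moore spectrum $\mS_\R/\ell$ over $\R$, where $\Fil^n_\Tate\Pi_{0,0}$ is constant and nonzero) illustrates exactly this stabilization-without-vanishing phenomenon; slice information alone cannot distinguish such cases.

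What is missing is an independent handle on $\Pi_{r,q}f_n\sE$ for $n$ large, not just on the successive quotients $s_n\sE$. The paper's proof supplies this through the homotopy coniveau tower model for the slice tower (Section~\ref{sec:HCT}), which yields the simplicial filtration and, via lemma~\ref{lem:Induction}, reduces the problem to the ``bottom'' piece $\Fil^{simp}_n(E)\pi_r f_nE(F)$. This piece is then identified with a Milnor--Witt transfer filtration (theorem~\ref{thm:GWSliceRevisited}), whose vanishing for $n$ large is proved in lemma~\ref{lem:Vanishing2} using Morel's description of $K^{MW}_n$, Bloch--Kato, the finite cohomological dimension of $k$, and the $\SH(k)^+\cong\DM(k)_\Q$ decomposition to control the $\Q$-local part. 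The induction on $r$ in proposition~\ref{prop:MainInd} then makes the bound uniform. None of this machinery appears in your proposal, and your use of motivic cohomology vanishing lines attacks only the slices, which, for the reason above, is not enough.

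A smaller issue: the reduction to generators via a ``five-lemma argument applied stalkwise'' needs care, because $\Fil^n_\Tate$ is defined as an image and images do not propagate cleanly through long exact sequences. This can be repaired by working with the statement that $(\Pi_{r,q}f_M\sE)_x[1/p]=0$ for $M\ge N$ (which is what corollary~\ref{cor:Main} actually proves and which is stable under cofiber sequences and retracts), but doing so makes it even clearer that the slice-vanishing strategy alone does not close the argument.
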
 
For a more detailed and perhaps more general statement, we refer the reader to theorem~\ref{thm:Main}.

\begin{IntroRems}1. The proof of theorem~\ref{IntroThm:Main} relies on the Bloch-Kato conjecture. \\
2.  As we have seen, Voevodsky's convergence conjecture is not true for all base fields $k$. An interesting class of fields strictly larger than the class of fields of finite cohomological dimension is those of finite virtual cohomological dimension (e.g., $\R$). We suggest the following formulation:
\begin{IntroConj} Let $k$ be a field of finite virtual 2-cohomological dimension and finite $p$-cohomological dimension for primes $p>2$. Then the $I(k)$-completed slice tower is convergent.
\end{IntroConj}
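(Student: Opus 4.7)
\emph{Proof proposal.} The plan is to reduce the conjecture to Theorem~\ref{IntroThm:Main} by separating the odd-primary and $2$-primary contributions, handling the latter by Galois descent from $k' := k(\sqrt{-1})$. The $I(k)$-completion should be viewed as the device that tames the Witt-theoretic obstruction to convergence identified in the remark following Proposition~\ref{IntroProp:converge}.

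Fixing a stalk $x\in X\in \Sm/k$, I would first handle each odd prime $\ell$ by passing to the $\ell$-completion $\sE^{\wedge}_{\ell}$. Since the hypothesis gives $\cd_\ell(k)<\infty$, and the proof of Theorem~\ref{IntroThm:Main} invokes cohomological dimension only prime by prime through the Bloch-Kato input, the same argument should go through for $\sE^\wedge_\ell$ to yield $(\Fil^n_\Tate\Pi_{r,q}\sE^{\wedge}_{\ell})_x=0$ for $n\ge N_\ell(\sE,r,q,\dim_k X)$. Since $I(k)$-completion acts trivially after inverting $2$ (the element $\langle -1\rangle - 1 \in I(k)$ is torsion of $2$-power order on any module with $2$ invertible on the augmentation quotient), this step accounts for the entire odd-primary component of the $I(k)$-completed filtration.

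For the prime $2$, I would base change to $k'$, which has finite $2$-cohomological dimension by the virtual-cd hypothesis, and apply Theorem~\ref{IntroThm:Main} over $k'$ to obtain convergence of the slice tower of $\sE_{k'}\in\SH_\fin(k')$ at every stalk above $x$. The Galois group $\Gal(k'/k)\cong C_2$ then furnishes a descent spectral sequence of the form
\[
E_2^{s,t}=H^s(C_2,\Pi_{r,q}(f_n\sE)_{k'}) \Rightarrow \Pi_{r,q}(f_n\sE),
\]
and the desired finite convergence would follow by proving that, after $I(k)$-completion, this spectral sequence has bounded abutment filtration at each stalk. The main obstacle is precisely this last point: one needs to show that the $I(k)$-adic topology on each slice graded piece agrees, up to a finite indexing shift, with the Tate cohomology topology for the $C_2$-action, so that the finiteness over $k'$ propagates to the $I(k)$-completed filtration over $k$. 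The natural framework is Bachmann's real \'etale (equivalently, $\rho$-completed) motivic stable homotopy theory, in which $I(k)$-completion corresponds to sheafification in the real \'etale topology of $\Sm/k$ and the controlling cohomological dimension is the virtual $2$-cohomological dimension of $k$, finite by hypothesis. With such a comparison in place, one could rerun the proof of Theorem~\ref{IntroThm:Main} with real \'etale cohomological dimension in place of ordinary cohomological dimension; the most delicate new input will be a real \'etale analogue at the prime $2$ of the Bloch-Kato finiteness used in Theorem~\ref{IntroThm:Main}, which is where the genuine new work would lie.
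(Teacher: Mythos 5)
This statement is a \emph{conjecture} in the paper, not a theorem: the author proposes it as a reformulation of Voevodsky's convergence conjecture for the wider class of fields of finite virtual $2$-cohomological dimension, and offers no proof. So there is no argument in the paper to compare yours against; your proposal has to stand or fall on its own terms.

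Read that way, what you have is a reasonable research outline rather than a proof, and you say as much yourself. The odd-primary reduction is plausible: the paper's Theorem~\ref{IntroThm:Main} already works one prime at a time through the Bloch--Kato input, so for $\ell>2$ with $\cd_\ell(k)<\infty$ one expects an $\ell$-complete version to go through, and the $I(k)$-adic filtration is essentially $2$-adic in nature (the paper uses $2I^n\subset I^{n+1}$ and $I^{N+1}=0$ for $N=\cd_2$), so $I(k)$-completion should indeed be invisible after inverting $2$. The base change to $k'=k(\sqrt{-1})$ to kill the virtual-cd issue is also the natural move. But the two steps that would actually prove the conjecture are precisely the ones you flag as unresolved: (i) descending convergence of the filtration through the $C_2$-Galois descent spectral sequence requires a bounded comparison between the $I(k)$-adic filtration on each slice and the filtration coming from the $C_2$-action, and you have not supplied this comparison; (ii) the real-\'etale / $\rho$-complete reformulation needs, as you note, a real-\'etale analogue of the Bloch--Kato finiteness at the prime $2$, which is genuinely new input. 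Until those two points are filled in, this is a program, not a proof — which is consistent with the paper leaving the statement as a conjecture.
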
\noindent
3.  It would be interesting to be able to say something about the $p$-torsion in $(\Fil_\Tate^*\Pi_{r,q}\sE)_x$.
\end{IntroRems}

The paper is organized as follows: We set the notation in \S \ref{sec:Background}. In \S\ref{sec:SliceTower}  we recall some basic facts about the slice tower, the truncation functors $f_n$ in $\SH(k)$ and $\SH_{S^1}(k)$, and the associated filtration $\Fil_\Tate^*\Pi_{a,b}$.  We recall the construction and basic properties of  the homotopy coniveau tower, a simplicial model for the slice tower in $\SH_{S^1}(k)$, in \S \ref{sec:HCT}. In \S \ref{sec:SpecSeq} we use the simplicial nature of the homotopy coniveau tower to analyze the terms in the slice tower. This leads to the main inductive step in our argument (lemma~\ref{lem:Induction}), and isolates the particular piece that we need to study. This is analyzed further in  \S \ref{sec:Bottom}, where we more precisely identify this piece in terms of a $\sK^{MW}_*$-module structure on the bi-graded homotopy sheaves (see theorem~\ref{thm:GWSliceRevisited}). In \S \ref{sec:MotCoh} we use a decomposition theorem of Morel and results of Cisinski-D\'eglise to prove some boundedness properties of the homotopy sheaves $\Pi_{p,q}\sE$ and their $\Q$-localizations $\Pi_{p,q}\sE_\Q$ for $\sE$ in $\SH_\fin(k)$, under the assumption that the base-field $k$ has finite 2-cohomological dimension.  In the final  section \ref{sec:Finale}, we assemble all the pieces and prove our main result. We conclude with two  appendices; the first   collects some results on norm maps for finite field extensions that are used throughout the paper and the second assembles some basic facts on the localization of compactly generated triangulated categories with respect to a collection of non-zero integers. 

I am grateful to the referee for a number of comments and suggestions for improving an earlier version of this paper. 

\section{Background and notation} \label{sec:Background} Unless we specify otherwise, $k$ will be a fixed perfect base field, without restriction on the characteristic. For details on the following constructions, we refer the reader to \cite{GoerssJardine, Jardine, Jardine2, MorelA1, MorelConn,  MorelLec, MorelVoev}.

We write $[n]$ for the set $\{0,\ldots,n\}$ with the standard order (including $[-1]=\0$) and let $\Delta$ be the category with objects $[n]$, $n=0,1,\ldots$, and morphisms $[n]\to[m]$ the order-preserving maps of sets. Given a category $\sC$, the category of simplicial objects in $\sC$ is as usual the category of functors $\Delta^\op\to\sC$, and the category of cosimplicial objects the functor category $\sC^\Delta$. 

$\Spc$ will denote the category of simplicial sets, $\Spc_\bullet$ the category of pointed simplicial sets, $\sH:=\Spc[WE^{-1}]$ the classical unstable homotopy category and $\sH_\bullet:=\Spc_\bullet[WE^{-1}]$ the pointed version; here $WE$ is the usual class of weak equivalences, that is, morphisms $A\to B$ that induce an isomorphism on all $\pi_n$, for all choice of base-point.  $\Spt$ is the category of spectra, that is, spectrum objects in $\Spc_\bullet$ with respect to the left suspension functor $\Sigma^\ell_{S^1}:=S^1\wedge(-)$. With $sWE$ denoting the class of  stable weak equivalences, that is, morphisms $f:E\to F$ in $\Spt$ that induce an isomorphism on all stable homotopy groups,  $\SH:=\Spt[sWE^{-1}]$ is the classical stable homotopy category. 

For a simplicial object in $\Spc$, resp. $\Spc_\bullet$, resp. $\Spt$, $S:\Delta^\op\to \Spc, \Spc_\bullet, \Spt$, we let $|S|\in\Spc, \Spc_\bullet, \Spt$ denote respective homotopy colimit $\hocolim_{\Delta^\op}S$.

The motivic versions are as follows: $\Sm/k$ is the category of smooth finite type $k$-schemes.  $\Spc(k)$ is the category of $\Spc$-valued presheaves on $\Sm/k$, $\Spc_\bullet(k)$ the $\Spc_\bullet$-valued presheaves, and $\Spt_{S^1}(k)$ the $\Spt$-valued presheaves. These all come with ``motivic''  model structures as simplicial model categories (see for example \cite{Jardine2}); we denote the corresponding homotopy categories by $\sH(k)$, $\sH_\bullet(k)$ and $\SH_{S^1}(k)$, respectively. Sending $X\in \Sm/k$ to the sheaf of sets  on $\Sm/k$ represented by $X$ (which we also denote by  $X$)  gives an embedding of $\Sm/k$ to $\Spc(k)$; we have the similarly defined embedding of the category of smooth pointed schemes over $k$ into $\Spc_\bullet(k)$. Sending a (pointed) simplicial set $A$ to the constant presheaf with value $A$ (also denoted by $A$) defines an embedding of $\Spc$ in $\Spc(k)$ and of $\Spc_\bullet$ in $\Spc_\bullet(k)$. 

Let $\G_m$ be the pointed $k$-scheme $(\A^1\setminus0,1)$. We let $T:=\A^1/(\A^1\setminus\{0\})$ and let $\Spt_T(k)$ denote the category of $T$-spectra, i.e.,  spectra in $\Spc_\bullet(k)$ with respect to the left $T$-suspension functor $\Sigma^\ell_T:= T\wedge(-)$.  $\Spt_T(k)$ has a motivic  model structure (see \cite{Jardine2}) and $\SH(k)$ is the homotopy category. We can also form the category of spectra in $\Spt_{S^1}(k)$ with respect to $\Sigma^\ell_T$;  with an appropriate model structure the resulting homotopy category is equivalent to $\SH(k)$. We will  identify these two homotopy categories without further mention. 

For each $\sA\in \Spc_\bullet(k)$, the suspension functor $\Sigma_\sA:\Spc_\bullet(k)\to \Spc_\bullet(k)$,   $\Sigma_\sA(\sB):=\sB\wedge \sA$, extends to the suspension functor $\Sigma_\sA:\Spt_{S^1}(k)\to 
\Spt_{S^1}(k)$ or  $\Sigma_\sA:\Spt_T(k)\to 
\Spt_T(k)$. For $\sA$ cofibrant, this gives the suspension functors $\Sigma_\sA:\sH_\bullet(k)\to \sH_\bullet(k)$,  $\Sigma_\sA:\SH_{S^1}(k)\to \SH_{S^1}(k)$ and  $\Sigma_\sA:\SH(k)\to \SH(k)$ by applying $\Sigma_\sA$ to a cofibrant replacement.

Both $\SH_{S^1}(k)$ and $\SH(k)$ are triangulated categories with suspension functor $\Sigma_{S^1}$. On $\sH_\bullet(k)$, $\SH_{S^1}(k)$ and $\SH(k)$, we have $\Sigma_T\cong \Sigma_{S^1}\circ \Sigma_{\G_m}$; the suspension functors $\Sigma_T$ and $\Sigma_{\G_m}$ on $\SH(k)$ are invertible.  For $\sA\in \Spc_\bullet(k)$, we have an enriched Hom on   $\Spt_{S^1}(k)$ and $\Spt_T(k)$  with values in spectra; we denote the enriched Hom functor by $\sHom(\sA,-)$. This passes to the homotopy categories $\sH_\bullet(k)$,  $\SH_{S^1}(k)$ and $\SH(k)$ to give for $\sA\in \sH_\bullet(k)$ an enriched Hom  $\sHom(\sA,-)$ with values in $\SH$. For $X\in \Sm/k$, $E\in \Spt_{S^1}(k)$, $\sHom(X_+, E)=E(X)$. 

We have the   triangle of  infinite suspension functors $\Sigma^\infty$ and their right adjoints $\Omega^\infty$
\[
\xymatrix{
\sH_\bullet(k)\ar[r]^{\Sigma^\infty_{S^1}}\ar[rd]_{\Sigma^\infty_T}&\SH_{S^1}(k)\ar[d]^{\Sigma^\infty_T}\\
&\SH(k)
}\quad
\xymatrix{
\sH_\bullet(k)&\SH_{S^1}(k)\ar[l]_{\Omega^\infty_{S^1}}\\
&\SH(k)\ar[u]_{\Omega^\infty_T}\ar[ul]^{\Omega^\infty_T}
}
\]
both commutative up to natural isomorphism. These are all left, resp. right derived versions of Quillen adjoint pairs of functors on the underlying model categories. 

For $\sX\in \sH_\bullet(k)$, we have the bi-graded homotopy sheaf $\Pi_{a,b}\sX$, defined for $a, b\ge0$, as the Nisnevich sheaf associated to the presheaf on $\Sm/k$
\[
U\mapsto \Hom_{\sH_\bullet(k)}(\Sigma^{a}_{S^1}\Sigma^b_{\G_m}U_+,\sX);
\]
note the perhaps non-standard indexing. We have the bi-graded homotopy sheaves $\Pi_{a,b}E$ for $E\in\SH_{S^1}(k)$, $b\ge0$, $a\in\Z$, and $\Pi_{a,b}\sE$ for $\sE\in\SH(k)$,  $a,b\in\Z$, by taking the Nisnevich sheaf associated to
\[
U\mapsto \Hom_{\SH_{S^1}(k)}(\Sigma^{a}_{S^1}\Sigma^b_{\G_m}\Sigma^\infty_{S^1}U_+,E)
\text{ or }
U\mapsto \Hom_{\SH(k)}(\Sigma^{a}_{S^1}\Sigma^b_{\G_m}\Sigma^\infty_TU_+,\sE),
\]
as the case may be. We write $\pi_n$ for $\Pi_{n,0}$; for $E\in \Spt_{S^1}(k)$ fibrant, $\pi_nE$ is the Nisnevich sheaf associated to the presheaf $U\mapsto \pi_n(E(U))$.

$\SH(k)$ has the set of compact generators 
\[
\{\Sigma^n_{S^1}\Sigma^m_T\Sigma^\infty_{S^1}X_+, n,m \in\Z,  X\in\Sm/k\}
\]
and 
$\SH_{S^1}(k)$ has the set of compact generators 
\[
\{\Sigma^n_{S^1}\Sigma^m_T\Sigma^\infty_{S^1}X_+, n\in\Z, m\ge0, X\in\Sm/k\}.
\]
For $\SH(k)$, this is \cite[theorem 9.2]{DuggerIsaksen}; the proof of this result goes through without change to yield the statement for $\SH_{S^1}(k)$. As these triangulated categories are both homotopy categories of stable model categories, both admit arbitrary small coproducts. 

For $F$ a finitely generated field extension of $k$, we may view $\Spec F$ as the generic point of some $X\in\Sm/k$ (since $k$ is perfect). Thus, for a Nisnevich sheaf $\sS$ on $\Sm/k$, we may define $\sS(F)$ as the stalk of $\sS$ at $\Spec F\in X$. For an arbitrary field extension $F$  of $k$ (not necessarily finitely generated over $k$), we define $\sS(F)$  as the colimit over $\sS(F_\alpha)$, as $F_\alpha$ runs over subfields of $F$ containing $k$ and finitely generated over $k$.  For a finitely generated field $F$ over $k$, we consider objects such as $\Spec F$, or $\A^n_F$ as pro-objects in $\Spc(k)$ by the usual system of finite-type models; the same holds for related objects such as $\Spec F_+$ in $\sH_\bullet(k)$ or $\Sigma^\infty_{S^1}\Spec F_+$ in $\SH_{S^1}(k)$, etc. We extend this to arbitrary field extensions of $k$ by taking the system of finitely generated subfields. We will usually not explicitly insert the ``pro-'' in the text, but all such objects, as well as morphisms and isomorphisms between them, should be so understood.

\section{Voevodsky's slice tower}\label{sec:SliceTower} 
We begin by recalling definition and basic properties of the Tate-Postnikov tower in $\SH_{S^1}(k)$ and  in $\SH(k)$. We then define the main object of our study: the  filtration on the bi-graded homotopy sheaves of a $T$-spectrum or an $S^1$-spectrum induced by the respective Tate-Postnikov towers.

For $n\ge0$, we let $\Sigma_T^n\SH_{S^1}(k)$ be the localizing subcategory of $\SH_{S^1}(k)$ generated by the (compact) objects $\Sigma^m_T\Sigma^\infty_{S^1}X_+$, with $X\in \Sm/k$ and $m\ge n$. We note that $\Sigma^0_T\SH_{S^1}(k)=\SH_{S^1}(k)$. The inclusion functor $i_n:\Sigma_T^n\SH_{S^1}(k)\to \SH_{S^1}(k)$ admits, by results of Neeman \cite[theorem 4.1]{NeemanGrothDual}, a right adjoint $r_n$; define the functor
$f_n:\SH_{S^1}(k)\to\SH_{S^1}(k)$ by $f_n:=i_n\circ r_n$. The co-unit for the adjunction gives us the natural morphism
\[
\rho_n:f_nE\to E
\]
for $E\in \SH_{S^1}(k)$; similarly, the inclusion $\Sigma_T^m\SH_{S^1}(k)\subset \Sigma_T^n\SH_{S^1}(k)$ for $n<m$ gives the natural transformation  $f_mE\to f_nE$, forming the {\em Tate-Postnikov tower}
\[
\ldots\to f_{n+1}E\to f_nE\to\ldots\to f_0E=E;
\]
we define $f_n:=\id$ for $n<0$. We complete $f_{n+1}E\to f_nE$ to a distinguished triangle
\[
f_{n+1}E\to f_nE\to s_nE\to f_{n+1}E[1];
\]
this distinguished triangle actually characterizes $s_nE$ up to {\em unique} isomorphism, hence this defines a  distinguished triangle that is functorial in $E$. The object $s_nE$ is the {\em $n$th slice} of $E$.

There is an analogous construction in $\SH(k)$: For $n\in\Z$, let 
\[\Sigma^n_T\SH^\eff(k)\subset\SH(k)
\]
be the localizing category generated by the $T$-suspension spectra $\Sigma^m_T\Sigma^\infty_TX_+$, for $X\in\Sm/k$ and $m\ge n$; write $\SH^\eff(k)$ for $\Sigma^0_T\SH^\eff(k)$. As above, the inclusion $i_n:\Sigma^n_T\SH^\eff(k)\to\SH(k)$ admits a right adjoint $r_n$, giving us the truncation functor $f_n$, $n\in\Z$,  and the Tate-Postnikov tower
\[
\ldots\to f_{n+1}\sE\to f_n\sE\to\ldots\to \sE.
\]
We define the layer $s_n\sE$ by a distinguished triangle as above. For   integers $N\ge n$, we let $\rho_{n,N}:f_N\to f_n$ and $\rho_n:f_n\to \id$ denote the canonical natural transformations. We mention the following elementary but useful result.

\begin{lem} \label{lem:SliceCompat} For integers $N, n$, the diagram of natural endomorphisms of $\SH(k)$
\[
\xymatrix{
f_n\circ f_N\ar[d]_{f_n(\rho_{N})}\ar[r]^{\rho_n(f_N)}&f_N\ar[d]^{\rho_N}\\
f_n\ar[r]_{\rho_n}&\id}
\]
commutes. Moreover, for $N\ge n$,  the map $\rho_n(f_N)$ is a natural isomorphism, and for $N\le n$, the map 
$f_n(\rho_{N})$ is a natural isomorphism. The same holds with $\SH_{S^1}(k)$ replacing $\SH(k)$.
\end{lem}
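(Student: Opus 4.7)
The plan splits into three nearly independent steps. The commutativity of the square is automatic: its four arrows are precisely the components of the naturality square for the natural transformation $\rho_n : f_n \Rightarrow \id$ evaluated at the morphism $\rho_N : f_N \to \id$ of $\SH(k)$, so nothing needs to be said beyond this observation.

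For the claim that $\rho_n(f_N \sE)$ is invertible when $N \ge n$, I would exploit the fact that the inclusion $i_n : \Sigma^n_T \SH^\eff(k) \hookrightarrow \SH(k)$ is fully faithful (it is the inclusion of a full subcategory). A standard triangle-identity argument then shows that the counit $\rho_n(\sF) : f_n \sF = i_n r_n \sF \to \sF$ is an isomorphism for every $\sF \in \Sigma^n_T \SH^\eff(k)$. Since $N \ge n$ forces the inclusion $\Sigma^N_T \SH^\eff(k) \subset \Sigma^n_T \SH^\eff(k)$, the object $f_N \sE$ belongs to $\Sigma^n_T \SH^\eff(k)$ and the claim follows.

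For the claim that $f_n(\rho_N)$ is invertible when $N \le n$, the plan is to complete the counit to a distinguished triangle
\[
f_N \sE \xrightarrow{\rho_N} \sE \longrightarrow c_N \sE \longrightarrow f_N \sE[1],
\]
and show that the cofibre $c_N \sE$ is killed by $f_n$. Applying the exact functor $r_N$ and using $r_N \circ i_N \cong \id$ (by full faithfulness of $i_N$) yields $r_N c_N \sE = 0$; equivalently, $\Hom_{\SH(k)}(X, c_N \sE) = 0$ for every $X \in \Sigma^N_T \SH^\eff(k)$. The hypothesis $n \ge N$ gives $\Sigma^n_T \SH^\eff(k) \subset \Sigma^N_T \SH^\eff(k)$, so this orthogonality persists against the generators of $\Sigma^n_T \SH^\eff(k)$, forcing $r_n c_N \sE = 0$ and hence $f_n c_N \sE = 0$. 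Applying the triangulated functor $f_n$ to the distinguished triangle then produces the desired isomorphism $f_n(\rho_N) : f_n f_N \sE \iso f_n \sE$. The same formal argument transfers verbatim to $\SH_{S^1}(k)$, working with the localizing subcategories $\Sigma^n_T \SH_{S^1}(k)$ in place of the effective subcategories of $\SH(k)$. There is no serious obstacle; the one point requiring mild care is the orthogonality dictionary $r_N Y = 0 \iff \Hom(X, Y) = 0$ for all $X \in \Sigma^N_T \SH^\eff(k)$, which follows at once from the adjunction $i_N \dashv r_N$ together with compactness of the generators.
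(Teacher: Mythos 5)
Your proof of the commutativity (naturality square for $\rho_n$ applied to $\rho_N$) and of the $N\ge n$ case (full faithfulness of $i_n$ plus a triangle identity makes the counit an isomorphism on $\Sigma^n_T\SH^\eff(k)$) matches the paper's argument, which phrases the same fact in terms of the universal property of $\rho_n(f_N\sE)$. For the $N\le n$ case, however, you take a genuinely different route: the paper argues by uniqueness of universal arrows, showing that $\rho_N(\sE)\circ\rho_n(f_N\sE)$ and $\rho_n(\sE)$ both represent the universal map into $\sE$ from $\Sigma^n_T\SH^\eff(k)$, whence $f_n(\rho_N)$ must be the unique comparison isomorphism. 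You instead complete $\rho_N$ to a distinguished triangle with cofibre $c_N\sE$, note that $r_N(\rho_N)$ is an isomorphism (again by full faithfulness), conclude that $c_N\sE$ lies in $\Sigma^N_T\SH^\eff(k)^\perp\subset\Sigma^n_T\SH^\eff(k)^\perp$, and hence $f_nc_N\sE=0$; exactness of $f_n$ then forces $f_n(\rho_N)$ to be invertible. Both arguments are correct. Yours has the small advantage of being mechanically triangulated and not requiring one to track which arrow realises the uniqueness isomorphism; the paper's stays entirely at the level of adjunction counits and avoids introducing the cofibre. Your one auxiliary claim---that $r_N c_N\sE=0$ is equivalent to orthogonality against $\Sigma^N_T\SH^\eff(k)$---is justified exactly as you say, using that $\Sigma^N_T\SH^\eff(k)$ is generated by a set of compact objects, so no gap there.
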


\begin{proof} The first assertion is just the naturality of $\rho_n$ with respect to the morphism $\rho_N:f_N\to\id$. 

Suppose $N\ge n$. Then $\Sigma^N_T\SH^\eff(k)\subset \Sigma^n_T\SH^\eff(k)$ and thus for all $\sE\in \SH(k)$,  $\id:f_N\sE\to f_N\sE$ satisfies the universal property of $\rho_n(f_N\sE):f_n(f_N\sE)\to f_N\sE$, namely, $f_N\sE$ is in $\Sigma^n_T\SH^\eff(k)$ and $\id:f_N\sE\to f_N\sE$ is universal for maps $T\to f_N\sE$ with $T\in  \Sigma^n_T\SH^\eff(k)$. Thus, $\rho_n(f_N\sE)$ is an isomorphism.

If $N\le n$, then for $\sE\in \SH(k)$, $f_n(f_N\sE)$ is in $ \Sigma^n_T\SH^\eff(k)$ and $\rho_n(f_N\sE):f_n(f_N\sE)\to f_N\sE$ is universal for maps $T\to f_N\sE$ with $T\in  \Sigma^n_T\SH^\eff(k)$. Since $ \Sigma^n_T\SH^\eff(k)\subset  \Sigma^N_T\SH^\eff(k)$, the universal property of $\rho_N(\sE):f_N\sE\to \sE$ shows that $\rho_N\circ \rho_n(f_N\sE):f_n(f_N\sE)\to \sE$ is universal for maps $T\to \sE$ with $T\in  \Sigma^n_T\SH^\eff(k)$, and thus $f_n(\rho_N)$ is an isomorphism. The proof for $\SH_{S^1}(k)$ is the same.
\end{proof}

\begin{lem}\label{lem:SliceIso} For $n\in\Z$, there is a natural isomorphism
\begin{equation}\label{eqn:SliceIso}
f_n\Omega^\infty_T\sE\cong \Omega^\infty_Tf_n\sE.
\end{equation}
\end{lem}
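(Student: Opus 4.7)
The plan is to identify $\Omega^\infty_T f_n\sE$ as a realization of $f_n\Omega^\infty_T\sE$ using the adjunction $\Sigma^\infty_T\dashv\Omega^\infty_T$ together with the compatibility of $\Sigma^\infty_T$ with the $T$-effective subcategories on either side.

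First I would show that $\Sigma^\infty_T$ sends $\Sigma^n_T\SH_{S^1}(k)$ into $\Sigma^n_T\SH^\eff(k)$. Being a triangulated left adjoint, $\Sigma^\infty_T$ is exact and preserves arbitrary coproducts, so it suffices to check this inclusion on compact generators; but $\Sigma^\infty_T$ acts by $\Sigma^m_T\Sigma^\infty_{S^1}X_+\mapsto \Sigma^m_T\Sigma^\infty_TX_+$, matching the given generating sets of the two subcategories.

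With this in hand, for $A\in\Sigma^n_T\SH_{S^1}(k)$ and $\sE\in\SH(k)$, combining the adjunction with the universal property of $\rho_n\:f_n\sE\to\sE$ yields the natural chain
\begin{align*}
\Hom(A,\Omega^\infty_T f_n\sE)
&\cong\Hom(\Sigma^\infty_T A,f_n\sE)\\
&\cong\Hom(\Sigma^\infty_T A,\sE)\\
&\cong\Hom(A,\Omega^\infty_T\sE),
\end{align*}
the middle step using $\Sigma^\infty_T A\in\Sigma^n_T\SH^\eff(k)$. Thus $\Omega^\infty_T\rho_n\:\Omega^\infty_T f_n\sE\to\Omega^\infty_T\sE$ is universal in $\SH_{S^1}(k)$ among maps from objects of $\Sigma^n_T\SH_{S^1}(k)$.

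The main obstacle is to verify that $\Omega^\infty_T f_n\sE$ itself lies in $\Sigma^n_T\SH_{S^1}(k)$; once that is known, the universal property above forces a canonical natural isomorphism $f_n\Omega^\infty_T\sE\iso\Omega^\infty_T f_n\sE$. Because $\Omega^\infty_T$ is only a right adjoint, it need not preserve coproducts, so checking this on the generators is not immediate. The cleanest route is Pelaez's levelwise model of the $T$-slice tower: representing $\sE$ by a fibrant $T$-spectrum $(\sE_0,\sE_1,\ldots)$ with $\Omega^\infty_T\sE=\sE_0$, one realizes $f_n\sE$ as a $T$-spectrum whose zeroth level is the $\SH_{S^1}(k)$-slice truncation of $\sE_0$, making both the membership $\Omega^\infty_T f_n\sE\in\Sigma^n_T\SH_{S^1}(k)$ and the desired identification tautological.
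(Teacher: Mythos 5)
Your adjunction chain for $n\ge 0$ is essentially the paper's argument: identify $\Omega^\infty_T\rho_n$ as universal among maps from $\Sigma^n_T\SH_{S^1}(k)$ via the adjunction $\Sigma^\infty_T\dashv\Omega^\infty_T$ and the compatibility $\Sigma^\infty_T(\Sigma^n_T\SH_{S^1}(k))\subset\Sigma^n_T\SH^\eff(k)$, then separately establish the membership $\Omega^\infty_Tf_n\sE\in\Sigma^n_T\SH_{S^1}(k)$. For the latter the paper cites its own predecessor, \cite[theorem 7.4.1]{LevineHC}, which gives precisely the levelwise description of $f_n\sE$ as a $T$-spectrum that you allude to (you attribute a version of it to Pelaez); in the paper the levelwise model is used only for the membership claim, with the universal property handled by the adjunction computation, whereas you float the option of reading off the whole identification levelwise. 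Either version is fine, but note that invoking it as ``tautological'' skips over the point that one needs the levelwise truncation to land in a fibrant (or at least quasi-fibrant) $T$-spectrum so that its zeroth level genuinely computes $\Omega^\infty_T$ of the derived truncation; that is part of what \cite[theorem 7.4.1]{LevineHC} provides.

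There is one genuine gap: the lemma asserts the isomorphism for all $n\in\Z$, but your argument only treats $n\ge 0$. For $n<0$ the subcategory $\Sigma^n_T\SH_{S^1}(k)$ is not even defined (the paper only introduces it for $n\ge 0$), and $f_n=\id$ on $\SH_{S^1}(k)$ by convention, while $f_n$ on $\SH(k)$ is nontrivial. So the content of the statement in that range is that $\Omega^\infty_T\sE\to\Omega^\infty_Tf_n\sE$ (induced by $f_0\sE\to f_n\sE$, say) is an isomorphism, which requires a separate — if short — argument: for every $F\in\SH_{S^1}(k)$, $\Sigma^\infty_TF$ lies in $\SH^\eff(k)\subset\Sigma^n_T\SH^\eff(k)$ for $n\le 0$, so the adjunction gives $\Hom(F,\Omega^\infty_Tf_0\sE)\cong\Hom(\Sigma^\infty_TF,f_0\sE)\cong\Hom(\Sigma^\infty_TF,f_n\sE)\cong\Hom(F,\Omega^\infty_Tf_n\sE)$, and Yoneda concludes. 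The paper carries out exactly this reduction; your write-up needs to add it.
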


\begin{proof}  
First suppose that $n\ge0$.  It follows from \cite[theorem 7.4.1]{LevineHC} that $\Omega^\infty_Tf_n\sE$ is in  $\Sigma_T^n\SH_{S^1}(k)$ and thus we need only show that $\Omega^\infty\rho_n:\Omega^\infty_Tf_n\sE\to \Omega^\infty_T\sE$ satisfies the universal property of $f_n\Omega^\infty_T\sE\to \Omega^\infty_T\sE$. $\Sigma^n_T\SH^\eff(k)$ is generated as a localizing subcategory of $\SH_{S^1}(k)$  by objects $\Sigma^n_TG$, $G\in \SH_{S^1}(k)$, so it suffices to check for objects of this form.  We have
\begin{multline*}
\Hom_{\SH_{S^1}(k)}(\Sigma_T^nG, \Omega^\infty_Tf_n\sE)\cong
\Hom_{\SH(k)}(\Sigma_T^\infty\Sigma_T^nG,  f_n\sE)\\
\cong \Hom_{\SH(k)}(\Sigma_T^n\Sigma_T^\infty G,  f_n\sE)
\xymatrix{\ar[r]_\sim^{\rho_{n*}}&}\Hom_{\SH(k)}(\Sigma_T^n\Sigma_T^\infty G,  \sE)\\
\cong \Hom_{\SH(k)}(\Sigma_T^\infty\Sigma_T^n G,  \sE)\cong \Hom_{\SH_{S^1}(k)}(\Sigma_T^nG, \Omega^\infty_T\sE).
\end{multline*}
It is easy to check that this sequence of isomorphisms is induced by $(\Omega^\infty_T\rho_n)_*$.

Now suppose that $n<0$. Then $f_n\Omega^\infty_T\sE\cong f_0\Omega^\infty_T\sE\cong \Omega^\infty_Tf_0\sE$, so it suffices to show that the map $f_0\sE\to f_n\sE$ induces an isomorphism
$\Omega^\infty_Tf_0\sE\to \Omega^\infty_Tf_n\sE$. But for $F\in \SH_{S^1}(k)$, $\Sigma^\infty_TF$ is in $\SH^\eff(k)$ and 
\begin{multline*}
\Hom_{\SH_{S^1}(k)}(F, \Omega^\infty_Tf_0\sE)\cong
\Hom_{\SH(k)}(\Sigma_T^\infty F,  f_0\sE)\\
\xymatrix{\ar[r]^{\rho_{n,0}}_\sim&} \Hom_{\SH(k)}( \Sigma_T^\infty F,  f_n\sE)\cong\Hom_{\SH_{S^1}(k)}(F, \Omega^\infty_Tf_n\sE).
\end{multline*}
\end{proof}

For $E\in \SH_{S^1}(k)$, we have (by  \cite[theorem 7.4.2]{LevineHC}) the canonical isomorphism
\begin{equation}\label{eqn:SliceIso2}
\Omega^r_{\G_m}f_nE\cong f_{n-r}\Omega^r_{\G_m}E
\end{equation}
for $r\ge0$. As $\Omega_{\G_m}:\SH(k)\to \SH(k)$ is an auto-equivalence,  and restricts to an equivalence
\[
\Omega_{\G_m}:\Sigma^n_T\SH^\eff(k)\to \Sigma^{n-1}_T\SH^\eff(k),
\]
the analogous identity in $\SH(k)$ holds as well, for all $r\in\Z$.

\begin{Def} For $a\in \Z$, $b\ge0$, $E\in \SH_{S^1}(k)$, define the filtration $F^*_\Tate\Pi_{a,b}E$ of $\Pi_{a,b}E$  by
\[
F^n_\Tate\Pi_{a,b}E:=\text{im}(\Pi_{a,b}f_nE\to  \Pi_{a,b}E);\ n\in \Z.
\]
Similarly, for $\sE\in \SH(k)$, $a,b,n\in\Z$, define
\[
F^n_\Tate\Pi_{a,b}\sE:=\text{im}(\Pi_{a,b}f_n\sE\to  \Pi_{a,b}\sE).
\]
\end{Def}
The main object of this paper is to understand $F^n_\Tate\Pi_{a,b}E$ for suitable $E$. For later use, we note the following:

\begin{lem} \label{lem:DegreeShift} 1. For $E\in\SH_{S^1}(k)$, $n,p,a,b\in\Z$ with $p, b,  b-p\ge0$,  the adjunction isomorphism $\Pi_{a,b}E\cong \Pi_{a, b-p}\Omega^p_{\G_m}E$ induces an isomorphism
\[
F^n_\Tate\Pi_{a,b}E\cong F^{n-p}_\Tate\Pi_{a,b-p}\Omega^p_{\G_m}E.
\]
Similarly, for $\sE\in \SH(k)$, $n,p,a,b\in \Z$,  the adjunction isomorphism $\Pi_{a,b}\sE\cong \Pi_{a.b-p}\Omega^p_{\G_m}\sE$ induces an isomorphism
\[
F^n_\Tate\Pi_{a,b}\sE\cong F^{n-p}_\Tate\Pi_{a,b-p}\Omega^p_{\G_m}\sE.
\]
2. For $\sE\in \SH(k)$, $a,b,n\in\Z$, with $b\ge0$, we have a canonical isomorphism
\[
\phi_{\sE,a,b,n}: \Pi_{a,b}f_n\sE\to \Pi_{a,b}f_n\Omega^\infty_T\sE,
\]
inducing an isomorphism $F^n_\Tate\Pi_{a,b}\sE\cong F^n_\Tate\Pi_{a,b}\Omega_T^\infty\sE$.
\end{lem}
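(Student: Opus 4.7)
The plan is to derive both assertions from the isomorphisms \eqref{eqn:SliceIso} and \eqref{eqn:SliceIso2} already established, together with the usual $(\Sigma,\Omega)$-adjunctions; the only real content is to check naturality with respect to the structure maps $\rho_n$.

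\emph{Part 1.} Apply the triangulated functor $\Omega^p_{\G_m}$ to $\rho_n\colon f_n E\to E$ and use \eqref{eqn:SliceIso2} to identify $\Omega^p_{\G_m}f_n E$ with $f_{n-p}\Omega^p_{\G_m}E$. The resulting morphism $f_{n-p}\Omega^p_{\G_m}E\to\Omega^p_{\G_m}E$ must agree with $\rho_{n-p}$, since both are universal for maps from $\Sigma^{n-p}_T\SH_{S^1}(k)$ into $\Omega^p_{\G_m}E$. Applying $\Pi_{a,b-p}$ and invoking the bi-graded $\Sigma^p_{\G_m}\dashv\Omega^p_{\G_m}$ adjunction (which needs $b-p\ge 0$) produces a commutative square
\[
\xymatrix{
\Pi_{a,b-p}f_{n-p}\Omega^p_{\G_m}E\ar[r]\ar[d]^\cong & \Pi_{a,b-p}\Omega^p_{\G_m}E\ar[d]^\cong\\
\Pi_{a,b}f_nE\ar[r] & \Pi_{a,b}E,
}
\]
whose horizontal images are $F^{n-p}_\Tate\Pi_{a,b-p}\Omega^p_{\G_m}E$ on top and $F^n_\Tate\Pi_{a,b}E$ on the bottom. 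For $\sE\in\SH(k)$ the functor $\Omega_{\G_m}$ is an equivalence restricting to $\Sigma^n_T\SH^\eff(k)\iso\Sigma^{n-1}_T\SH^\eff(k)$, so the same argument applies for every $p\in\Z$ with no positivity hypothesis.

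\emph{Part 2.} Combine the isomorphism $f_n\Omega^\infty_T\sE\cong\Omega^\infty_T f_n\sE$ of Lemma~\ref{lem:SliceIso} with the $\Sigma^\infty_T\dashv\Omega^\infty_T$ adjunction. Since $b\ge 0$, the test objects $\Sigma^a_{S^1}\Sigma^b_{\G_m}\Sigma^\infty_{S^1}U_+$ lie in $\SH_{S^1}(k)$ and have $T$-suspension $\Sigma^a_{S^1}\Sigma^b_{\G_m}\Sigma^\infty_T U_+$; hence the adjunction yields a natural identification $\Pi_{a,b}\Omega^\infty_T(-)\cong\Pi_{a,b}(-)$ on $\SH(k)$. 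Composing gives
\[
\phi_{\sE,a,b,n}\colon\Pi_{a,b}f_n\sE\iso\Pi_{a,b}\Omega^\infty_T f_n\sE\iso\Pi_{a,b}f_n\Omega^\infty_T\sE.
\]
The proof of Lemma~\ref{lem:SliceIso} constructs its isomorphism precisely so that $\Omega^\infty_T(\rho_n)$ corresponds to $\rho_n(\Omega^\infty_T\sE)$, so the compatibility with the comparison map to $\Pi_{a,b}\sE=\Pi_{a,b}\Omega^\infty_T\sE$ is automatic, and $\phi_{\sE,a,b,n}$ carries $F^n_\Tate\Pi_{a,b}\sE$ isomorphically onto $F^n_\Tate\Pi_{a,b}\Omega^\infty_T\sE$.

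The whole argument is formal; the only delicate point is that the canonical isomorphisms \eqref{eqn:SliceIso} and \eqref{eqn:SliceIso2} intertwine the appropriate $\rho_n$'s, but in each case this is forced by the universal property defining the truncation functors as right adjoints.
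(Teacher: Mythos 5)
Your proof is correct and follows the same route as the paper: Part~1 uses the canonical isomorphism $\Omega^p_{\G_m}f_nE\cong f_{n-p}\Omega^p_{\G_m}E$ of \eqref{eqn:SliceIso2} together with the $(\Sigma_{\G_m},\Omega_{\G_m})$-adjunction, and Part~2 uses $f_n\Omega^\infty_T\sE\cong\Omega^\infty_Tf_n\sE$ of Lemma~\ref{lem:SliceIso} together with the $(\Sigma^\infty_T,\Omega^\infty_T)$-adjunction. You spell out the naturality with respect to $\rho_n$ a bit more explicitly than the paper does, but the substance is identical.
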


\begin{proof} (1) By \eqref{eqn:SliceIso2}, adjunction induces isomorphisms
\begin{multline*}
F^n_\Tate\Pi_{a,b}E:=\text{im}(\Pi_{a,b}f_nE\to \Pi_{a,b}E)\cong \text{im}(\Pi_{a,b-p}\Omega^p_{\G_m}f_nE\to \Pi_{a,b-p}\Omega^p_{\G_m}E)\\
=  \text{im}(\Pi_{a,b-p} f_{n-p}\Omega^p_{\G_m}E\to \Pi_{a,b-p}\Omega^p_{\G_m}E)
=F^{n-p}_\Tate\Pi_{a,b-p}\Omega^p_{\G_m}E.
\end{multline*}
The proof for $\sE\in \SH(k)$ is the same.

For (2), the isomorphism $\phi_{\sE,a,b,n}$ arises from \eqref{eqn:SliceIso} and the adjunction isomorphism
\begin{align*}
\Hom_{\SH_{S^1}(k)}(\Sigma^{a}_{S^1}\Sigma^b_{\G_m}\Sigma^\infty_{S^1}U_+, f_n\Omega^\infty_T\sE)&\cong
\Hom_{\SH_{S^1}(k)}(\Sigma^{a}_{S^1}\Sigma^b_{\G_m}\Sigma^\infty_{S^1}U_+, \Omega^\infty_Tf_n\sE)\\
&\cong
\Hom_{\SH(k)}(\Sigma^{a}_{S^1}\Sigma^b_{\G_m}\Sigma^\infty_TU_+, f_n\sE).
\end{align*}
\end{proof}

\section{The homotopy coniveau tower}\label{sec:HCT}  Our computations rely heavily on our model for the Tate-Postnikov tower in $\SH_{S^1}(k)$, which we briefly recall (for details, we refer the reader to \cite{LevineHC}). 

We start with the cosimplicial scheme $n\mapsto \Delta^n$, with $\Delta^n$ the {\em algebraic $n$-simplex} $\Spec k[t_0,\ldots, t_n]/\sum_it_i-1$. The cosimplicial structure is given by sending a map $g:[n]\to [m]$ to the map $\Delta(g):\Delta^n\to \Delta^m$ determined by
 \[
 \Delta(g)^*(t_i)=\begin{cases}\sum_{j, g(j)=i}t_j&\text{ if }g^{-1}(i)\neq\0\\0&\text{ else.}\end{cases}
 \]
 A {\em face} of $\Delta^m$ is a closed subscheme $F$ defined by equations $t_{i_1}=\ldots=t_{i_r}=0$; we let $\partial\Delta^n\subset \Delta^n$ be the closed subscheme defined by $\prod_{i=0}^nt_i=0$, i.e.,  $\partial\Delta^n$ is the union of all the proper faces.

 Take $X\in \Sm/k$. We let $\sS_X^{(q)}(m)$ denote the set of closed subsets $W\subset X\times\Delta^m$ such that 
 \[
 \codim_{X\times F}W\cap X\times F\ge q
 \]
 for all faces $F\subset \Delta^m$ (including $F=\Delta^m$). We make $\sS_X^{(q)}(m)$ into a partially ordered set via inclusions of closed subsets. Sending $m$ to  $\sS_X^{(q)}(m)$ and $g:[n]\to [m]$ to $\Delta(g)^{-1}:\sS_X^{(q)}(m)\to \sS_X^{(q)}(n)$ gives us the simplicial poset $\sS_X^{(q)}$.

 Now take $E\in \Spt_{S^1}(k)$. For $X\in\Sm/k$ and closed subset $W\subset X$, we have the spectrum with supports $E^W(X)$ defined as the homotopy fiber of the restriction map $E(X)\to E(X\setminus W)$. This construction is functorial in the pair $(X,W)$, where we define a map $f:(Y,T)\to (X,W)$ as a morphism $f:Y\to X$ in $\Sm/k$ with $f^{-1}(W)\subset T$. We usually denote the map induced by $f:(Y,T)\to (X,W)$ by $f^*:E^W(X)\to E^T(Y)$, but for $f=\id_X:(X,T)\to (X,W)$, $i:W\to T$ the resulting inclusion, we write $i_*:E^W(X)\to E^T(X)$ for $\id_X^*$.

 Define
 \[
 E^{(q)}(X,m):=\hocolim_{W\in \sS_X^{(q)}(m)}E^W(X\times\Delta^m).
 \]
 The fact that $m\mapsto \sS_X^{(q)}(m)$ is a simplicial poset, and $(Y,T)\mapsto E^T(Y)$ is a functor from the category of pairs to spectra shows that $m\mapsto  E^{(q)}(X,m)$ defines a simplicial spectrum. We define the spectrum $E^{(q)}(X)$ by
  \[
 E^{(q)}(X):=|m\mapsto  E^{(q)}(X,m)|:= \hocolim_{\Delta^\op}  E^{(q)}(X,-).
 \]
 
 For $q\ge q'$, the inclusions $\sS_X^{(q)}(m)\subset \sS_X^{(q')}(m)$ induce a map of simplicial posets $\sS_X^{(q)}\subset \sS_X^{(q')}$ and thus a morphism of spectra $i_{q',q}:E^{(q)}(X)\to E^{(q')}(X)$.  Since $E^{(0)}(X,0)=E(X)$,  we have the canonical map
 \[
 \epsilon_X:E(X)\to  E^{(0)}(X),
 \]
 which is a weak equivalence if $E$ is homotopy invariant. Together, this forms the {\em augmented homotopy coniveau tower} 
 \[
E^{(*)}(X):=\ldots\to E^{(q+1)}(X)\xrightarrow{i_{q}}E^{(q)}(X)\xrightarrow{i_{q-1}}\ldots E^{(1)}(X)
 \xrightarrow{i_{0}}E^{(0)}(X)\xleftarrow{\epsilon_X}E(X)
\]
with $i_q:=i_{q,q+1}$. Thus, for homotopy invariant $E$, we have the
homotopy coniveau tower in $\SH$
 \[
E^{(*)}(X):= \ldots\to E^{(q+1)}(X)\xrightarrow{i_{q}}E^{(q)}(X)\xrightarrow{i_{q-1}}\ldots E^{(1)}(X)
 \xrightarrow{i_{0}}E^{(0)}(X)\cong E(X).
\]

Letting $\Sm\ds k$ denote the subcategory of $\Sm/k$ with the same objects and with morphisms the smooth morphisms, it is not hard to see that sending $X$ to $E^{(*)}(X)$ defines a functor from  $\Sm\ds k^\op$ to augmented towers of spectra.

 On the other hand, for $E\in \Spt_{S^1}(k)$, we have the (augmented) Tate-Postnikov tower
 \[
 f_*E:=\ldots\to f_{q+1}E\to f_qE\to\ldots\to f_0E\cong E
 \]
 in $\SH_{S^1}(k)$, which we may evaluate at $X\in\Sm/k$, giving the  tower $f_*E(X)$ in  $\SH$, augmented over $E(X)$.

Call $E\in \Spt_{S^1}(k)$ {\em quasi-fibrant} if, for $E\to E^{fib}$ a fibrant replacement in the motivic model structure, the map $E(X)\to E^{fib}(X)$
is a stable weak equivalence in $\Spt$ for all $X\in \Sm/k$.  As a general rule, we will represent an $E\in \SH_{S^1}(k)$ by a fibrant object in $\Spt_{S^1}(k)$, also denoted $E$, without making explicit mention of this choice.

As a direct consequence of  \cite[theorem 7.1.1]{LevineHC} we have
 \begin{thm} \label{thm:HC} Let $E$ be a quasi-fibrant object in $\Spt_{S^1}(k)$, and take $X\in \Sm/k$. Then there is an isomorphism of augmented towers in $\SH$
 \[
( f_*E)(X)\cong E^{(*)}(X)
\]
over the identity on $E(X)$, which is natural with respect to smooth morphisms in $\Sm/k$.
 \end{thm}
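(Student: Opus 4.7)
The plan is to derive this result directly from \cite[theorem 7.1.1]{LevineHC}, so the task reduces to matching the formalism of that cited theorem to the setup here. First I would recall that the homotopy coniveau tower $E^{(*)}(-)$ was constructed in \cite{LevineHC} precisely as a presheaf-of-spectra model for the truncation functors $f_q$ on $\SH_{S^1}(k)$, where $f_q$ is built from the localizing subcategory $\Sigma_T^q\SH_{S^1}(k)$ generated by $\Sigma_T^m\Sigma^\infty_{S^1}X_+$ for $m\ge q$. The cited theorem packages the identification of $f_qE$ with $E^{(q)}$ at the level of presheaves, compatible with the transition maps $i_{q-1,q}$ and with the augmentation to $E$.

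Next I would explain the two things one must verify to deduce the statement as formulated here. The first is that $E^{(q)}(X)$ lies in $\Sigma_T^q\SH_{S^1}(k)$: this comes from the codimension condition $\codim_{X\times F}W\cap X\times F\ge q$ in the definition of $\sS_X^{(q)}(m)$, combined with a Gysin-type analysis of the supported spectra $E^W(X\times\Delta^m)$ that presents them (up to weak equivalence) as $q$-fold $\G_m$-loops of associated normal bundle Thom spectra. The second is the universal property: the augmentation $E^{(q)}(X)\to E^{(0)}(X)\simeq E(X)$ is terminal among maps from objects of $\Sigma_T^q\SH_{S^1}(k)$ (evaluated at $X$). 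The quasi-fibrancy hypothesis is exactly what ensures one may work with $E$ directly, without passing to a fibrant replacement: it guarantees that $E(X\times\Delta^m)$ and $E^W(X\times\Delta^m)$ compute the correct values, so that the homotopy colimit $\hocolim_{W\in\sS_X^{(q)}(m)}E^W(X\times\Delta^m)$ agrees with the value computed from any fibrant model.

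Naturality with respect to smooth morphisms $f:Y\to X$ is immediate once the identification is in place: smoothness of $f$ is what ensures that $f\times\id_{\Delta^m}$ preserves codimension, so that pullback gives a map of simplicial posets $\sS_X^{(q)}\to \sS_Y^{(q)}$ and hence a morphism $E^{(q)}(X)\to E^{(q)}(Y)$, compatible with the transition maps and with the augmentation. The functoriality for $f_*E$ in $X$ under smooth maps is built into its construction in $\SH_{S^1}(k)$, and one checks that the isomorphism produced by the cited theorem intertwines these two functorialities.

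The main (and only real) obstacle is the universal property underlying theorem 7.1.1 of \cite{LevineHC}, which in turn rests on the moving lemmas developed in that paper for simplicial spectra with supports — without these, the comparison between the slice-theoretic definition of $f_qE$ and the geometric construction $E^{(q)}$ would not be available. All other steps are formal consequences of definitions and the quasi-fibrancy hypothesis.
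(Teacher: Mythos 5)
Your proposal is correct and takes the same route as the paper: the paper states the theorem as a direct consequence of \cite[theorem 7.1.1]{LevineHC} and offers no further argument, and you likewise reduce to that cited theorem, supplying the (accurate) unpacking of what it asserts and why the quasi-fibrancy and smooth-naturality hypotheses feed into it. The elaboration is sound, though strictly more than the paper itself records.
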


  In particular, we may use the model $E^{(q)}(X)$ to understand $(f_qE)(X)$.

\begin{rem} For $X, Y\in \Sm/k$ with given $k$-points $x\in X(k)$, $y\in Y(k)$, we have a natural isomorphism in $\SH_{S^1}(k)$
\[
\Sigma^\infty_{S^1}(X\wedge Y)\oplus \Sigma^\infty_{S^1}(X\vee Y)\cong \Sigma^\infty_{S^1}(X\times Y),
\]
using the additivity of the category  $\SH_{S^1}(k)$. Thus, $\Sigma^\infty_{S^1}(X\wedge Y)$ is a canonically defined summand of $\Sigma^\infty_{S^1}(X\times Y)$. In particular for $E$ a quasi-fibrant object of $\Spt_{S^1}(k)$, we have a natural isomorphism in $\SH$
\[
\sHom(X\wedge Y,E)\cong \hofib\left(E(X\times Y)\to \hofib(E(X)\oplus E(Y)\to E(k))\right)
\]
where the maps are induced by the evident restriction maps. In particular, we may define $E(X\wedge Y)$ via the above isomorphism, and our comparison results for Tate-Postnikov tower and homotopy coniveau tower extend to values at smash products of smooth pointed schemes over $k$.
\end{rem}

\section{The simplicial filtration}\label{sec:SpecSeq}
In this section, we study the filtration on  $\pi_rf_nE(X)$ induced by the simplicial structure of the model $E^{(n)}(X)$.

\begin{lem}\label{lem:Homotopy1} Let $S$ be a smooth $k$-scheme, $W\subset S\times\A^1$ a closed subset such that $p:W\to S$ is finite. Let $E\in\Spt_{S^1}(k)$ be quasi-fibrant. Then the map induced by the inclusion $i:W\to p^{-1}(p(W))$ induces the zero map 
\[
i_*:\pi_*(E^W(S\times\A^1))\to \pi_*(E^{p^{-1}(p(W))}(S\times\A^1)).
\]
\end{lem}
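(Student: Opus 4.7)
The plan is to construct an explicit $\A^1$-homotopy realizing $i_*$ as a composition of maps whose composite is zero on $\pi_*$. The key ingredient is the multiplicative contraction $\mu: S \times \A^1 \times \A^1 \to S \times \A^1$ defined by $\mu(s,t,u) = (s, ut)$, which interpolates between the identity (at $u=1$) and the collapse onto the zero section $(s,t) \mapsto (s,0)$ (at $u=0$). Crucially, if $\mu(s,t,u) \in W$, then $s \in p(W)$, so $\tilde W := \mu^{-1}(W) \subset p(W) \times \A^1 \times \A^1 = p_{12}^{-1}(W')$, where $W' := p^{-1}(p(W))$. The pullback $\mu^*$ followed by enlarging supports gives
\[
\Phi: E^W(S \times \A^1) \xrightarrow{\mu^*} E^{\tilde W}(S \times \A^1 \times \A^1) \to E^{p_{12}^{-1}(W')}(S \times \A^1 \times \A^1).
\]

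By $\A^1$-homotopy invariance applied to the cylindrical support $p_{12}^{-1}(W')$, the restrictions $i_0^*, i_1^* : E^{p_{12}^{-1}(W')}(S \times \A^1 \times \A^1) \to E^{W'}(S \times \A^1)$ are both inverses to the pullback $p_{12}^*$, hence canonically equal in $\SH$. At $u=1$, $\mu$ is the identity and $\tilde W|_{u=1} = W$, so $i_1^* \circ \Phi = i_*$. At $u=0$, $\mu$ factors through the zero section $\sigma_0: S \to S \times \A^1$, $s \mapsto (s,0)$, so $i_0^* \circ \Phi$ factors as
\[
E^W(S \times \A^1) \xrightarrow{\sigma_0^*} E^{W_0}(S) \xrightarrow{\pi_S^*} E^{W_0 \times \A^1}(S \times \A^1) \to E^{W'}(S \times \A^1),
\]
where $W_0 = \sigma_0^{-1}(W) = \{s : (s,0) \in W\}$ and the last arrow is the inclusion of supports. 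Identifying through the $\A^1$-invariance equivalence $E^{W'}(S \times \A^1) \simeq E^{p(W)}(S)$ then yields $i_* \simeq j_* \circ \sigma_0^*$ in $\SH$, where $j_*: E^{W_0}(S) \to E^{p(W)}(S)$ is the support-inclusion associated to $W_0 \subset p(W)$.

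To finish, I would show this reduced composition vanishes on $\pi_*$. The same argument applied with $\mu_c(s,t,u) = (s, ut + c(1-u))$ shows $i_* \simeq j_{c*} \circ \sigma_c^*$ for every $c \in k$, giving considerable flexibility. Using Nisnevich descent on $S$ (allowed since quasi-fibrant $E$ satisfies Nisnevich descent), one can reduce to the local situation at a point of $p(W)$, where the finite scheme $W \to p(W)$ decomposes into disjoint finite local pieces and the problem reduces to the case where $W$ is a graph of a section. In that special case, translation puts us in the situation $W = S \times \{0\} \subset S \times \A^1 = W'$, and the vanishing of the forget-supports map $E^{S \times \{0\}}(S \times \A^1) \to E(S \times \A^1)$ on $\pi_*$ follows from the split injectivity of $E(S \times \A^1) \to E(S \times \G_m)$ — indeed, the $k$-point $1 \in \G_m$ provides a section $S \to S \times \G_m$ whose composite with the inclusion into $S \times \A^1$ gives, via $\A^1$-invariance, an equivalence $E(S \times \A^1) \simeq E(S) \simeq E(S \times \A^1)$ factoring through $E(S \times \G_m)$.

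The main obstacle is the Nisnevich-local reduction from a general finite $W$ to the graph-of-section case: over small fields one cannot globally find a section of the projection that avoids $W$, so the argument must be carried out on henselizations, where a finite morphism decomposes into local pieces and further structural analysis of ramification may be needed. This step is where the bulk of the technical work lies, while the homotopy-theoretic skeleton above is the conceptually clean part of the argument.
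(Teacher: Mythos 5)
Your first paragraph is correct: the multiplicative contraction $\mu(s,t,u)=(s,ut)$, supports enlarged to $W'\times\A^1$, with $i_0^*=i_1^*$ by $\A^1$-invariance of the cylinder support, does prove that $i_*$ agrees on $\pi_*$ with the composite through the zero section. The graph-of-section special case is also correctly disposed of (forget-supports is split on homotopy by the unit section of $\G_m$). The problem is the bridge between them, which you flag yourself as ``the main obstacle,'' and it is not a technicality that can be filled in as outlined. Your reduction leaves you needing to show that $j_*\circ\sigma_0^*\colon\pi_*E^W(S\times\A^1)\to\pi_*E^{p(W)}(S)$ vanishes, but this is simply the original statement rewritten (via $\A^1$-invariance it is the map induced by $(s,t)\mapsto(s,0)$ between the same pair of spectra with supports). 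There is no genuine decrease in complexity. The proposed Nisnevich-local reduction to a ``graph of a section'' cannot work as stated: if $W\to p(W)$ has nontrivial residue field extensions (e.g.\ $S=\Spec F$ and $W$ a point with $[k(W):F]>1$), no Nisnevich refinement of $S$ can trivialize this, since Nisnevich covers only split finite \emph{\'etale} covers after henselization and $W\to p(W)$ is finite but a priori ramified and with inseparable residue extensions. So the case you can actually handle is a strictly smaller one, and the gap is exactly where the content lies.

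The paper's proof sidesteps this entirely by compactifying in the $\A^1$-direction. Because $W$ is finite over $S$, $W$ is already closed in $S\times\P^1$ and disjoint from the $\infty$-section. Restricting the enlarged-supports class from $S\times\P^1$ to the open chart around $\infty$ therefore kills it; unwinding the commutative diagrams and the $\A^1\setminus\{0\}$-localization sequence then shows that $i_*$ lands in the image of the pushforward from $Z\times\{0\}$, and that pushforward vanishes because restriction to $Z\times\{1\}$ is an isomorphism by homotopy invariance and factors through $Z\times\{0\}\cap Z\times\{1\}=\emptyset$. This handles an arbitrary finite $W$ uniformly, over any base field, with no local analysis. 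If you want to salvage your approach, the lesson from the paper is to deform not inside $S\times\A^1$ but inside its $\P^1$-compactification, so that finiteness of $W$ over $S$ can be leveraged through the infinity section; your multiplicative contraction lives entirely in the affine chart and never sees that structure.
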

\begin{proof} We steal a proof of Morel:  Let $Z=p(W)$, and let $j_0:S\times\A^1\to S\times\P^1$ be the standard open neighborhood of $S\times0$ in $S\times\P^1$.  Since $W$ is finite over $S$, $W$ is closed in $S\times\P^1$, so we have the following commutative diagram
\begin{equation}\label{eqn:Eqn1}
\xymatrix{
\pi_r(E^W(S\times\P^1))\ar[r]^{\bar{i}_*}\ar[d]_{j_0^*}& \pi_r(E^{Z\times\P^1}(S\times\P^1))\ar[d]^{j_0^*}\\
\pi_r(E^W(S\times\A^1))\ar[r]_{i_*}& \pi_r(E^{Z\times\A^1}(S\times\A^1)),
}
\end{equation}
where $\bar{i}:W\to Z\times\P^1$ is the inclusion. Let $i_\infty:S\to S\times\P^1$ be the infinity section. Since $W\cap S\times\infty=\0$, the composition
\[
\pi_r(E^W(S\times\P^1))\xrightarrow{\bar{i}_*}\pi_r(E^{Z\times\P^1}(S\times\P^1))\xrightarrow{i_\infty^*}\pi_r(E^{Z\times\infty}(S\times\infty))
\]
is the zero map. Letting $j_\infty:S\times\A^1\to S\times\P^1$ be the standard open neighborhood of $S\times\infty$ in $S\times\P^1$, the restriction map
\[
i_\infty^*:\pi_r(E^{Z\times\A^1}(S\times\A^1))\to \pi_r(E^{Z\times\infty}(S\times\infty))
\]
is an isomorphism, hence 
\[
j_\infty^*\circ\bar{i}_*:\pi_r(E^W(S\times\P^1))\to  \pi_r(E^{Z\times\A^1}(S\times\A^1))
\]
is the zero map. Write $\tilde{j}_\infty$ for the inclusions of  $S\times \P^1\setminus\{0,\infty\}$ into $j_0(S\times \A^1)$ and $\tilde{j}_0$ for the inclusions of  $S\times \P^1\setminus\{0,\infty\}$ into $j_\infty(S\times \A^1)$. Combining \eqref{eqn:Eqn1} with the commutativity of the diagram
\[
\xymatrix{
\pi_r(E^W(S\times\P^1))\ar[r]^{j_\infty^*\circ\bar{i}_*}\ar[d]_{j_0^*}& \pi_r(E^{Z\times\A^1}(S\times\A^1))\ar[d]^{\tilde{j}_0^*}\\
\pi_r(E^W(S\times\A^1))\ar[r]_-{\tilde{j}_\infty^*\circ i_*}& \pi_r(E^{Z\times\P^1\setminus\{0,\infty\}}(S\times\P^1\setminus\{0,\infty\}))
}
\]
we see that  $\tilde{j}_\infty^*\circ i_*=0$.  From the long exact localization sequence
\begin{multline*}
\ldots\to  \pi_r(E^{Z\times0}(S\times\A^1)) \xrightarrow{i_{0*}} \pi_r(E^{Z\times\A^1}(S\times\A^1))\\\xrightarrow{\tilde{j}_\infty^*}\pi_r(E^{Z\times\A^1\setminus\{0\}}(S\times\A^1\setminus\{0\}))\to\ldots
\end{multline*}
we see that
\[
 i_*(\pi_r(E^W(S\times\A^1)))\subset i_{0*}( \pi_r(E^{Z\times0}(S\times\A^1)))\subset \pi_r(E^{Z\times\A^1}(S\times\A^1)).
 \]
 But  $i_{0*}:\pi_r(E^{Z\times0}(S\times\A^1))\to \pi_r(E^{Z\times\A^1}(S\times\A^1))$
 is the zero map, since $i_1^*:\pi_r(E^{Z\times\A^1}(S\times\A^1))\to \pi_r(E^{Z\times1}(S\times1))$ is an isomorphism and $i_1^*\circ i_{0*}=0$.
 \end{proof}

\begin{lem} \label{lem:Homotopy2} Suppose $F$ is   infinite. Take $W\in \sS_F^{(n)}(p)$ and suppose $\codim_{\Delta^p_F}(W)>n$. Then the canonical map
$E^W(\Delta^p_F)\to E^{(n)}(\Spec F,p)$ induces the zero map on $\pi_*$. \end{lem}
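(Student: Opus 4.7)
The plan is to reduce the lemma to a direct application of Lemma~\ref{lem:Homotopy1}. For any closed subset $W' \subseteq \Delta^p_F$ with $W \subseteq W'$ and $W' \in \sS^{(n)}_F(p)$, the canonical map factors as
\[
E^W(\Delta^p_F) \xrightarrow{i_*} E^{W'}(\Delta^p_F) \to E^{(n)}(\Spec F, p),
\]
so it suffices to produce such a $W'$ for which the first map $i_*$ vanishes on $\pi_*$ by Lemma~\ref{lem:Homotopy1}.

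First, from $\codim_{\Delta^p_F}(W) > n$ one has $\dim W \leq p - n - 1$. Identifying $\Delta^p_F \cong \A^p_F$ via a choice of affine coordinates and using that $F$ is infinite, I would choose an $F$-linear projection $\pi: \A^p_F \to \A^{p-1}_F$ in general position. Together with a linear section this realizes $\Delta^p_F$ as the trivial bundle $p: S \times_F \A^1_F \to S$ with $S := \A^{p-1}_F$ (pro-smooth over $k$ after reducing to a finitely generated subfield of $F$), and for generic $\pi$ the restriction $\pi|_W : W \to \pi(W)$ is a finite morphism. Setting $W' := \pi^{-1}(\pi(W)) = p^{-1}(p(W))$, Lemma~\ref{lem:Homotopy1} then applies directly to yield the vanishing of $i_*$ on $\pi_*$.

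The core technical step will be verifying that $W'$ belongs to $\sS^{(n)}_F(p)$. For each face $G \subseteq \Delta^p$, choosing $\pi$ so that $\pi|_G$ is finite onto its image and so that $\pi(W)$ and $\pi(G)$ meet in general position inside $\A^{p-1}_F$ yields
\[
\dim(W' \cap G) = \dim\bigl((\pi|_G)^{-1}(\pi(W))\bigr) \leq \dim\pi(W) + \dim\pi(G) - (p-1) \leq \dim G - n,
\]
so that $\codim_G(W' \cap G) \geq n$, with the intersection being empty whenever this bound turns negative. In particular the case $G = \Delta^p$ gives $\codim_{\Delta^p_F}(W') \geq n$.

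The principal obstacle is arranging all of these generic-position conditions for every face $G$ simultaneously with a single projection $\pi$. Each face contributes finitely many nonvanishing polynomial conditions on the coefficients of $\pi$, together defining a Zariski-open subvariety of the affine parameter space of $F$-linear projections $\A^p_F \to \A^{p-1}_F$. Since $\Delta^p$ has only finitely many faces and $F$ is infinite, this open subvariety contains an $F$-rational point, furnishing the desired projection and completing the proof.
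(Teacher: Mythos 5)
Your proof is correct and follows essentially the same route as the paper: identify $\Delta^p_F$ with affine space, choose a general linear projection so that $W'=L^{-1}(L(W))$ still lies in $\sS_F^{(n)}(p)$, factor the canonical map through $E^{W'}(\Delta^p_F)$, and apply Lemma~\ref{lem:Homotopy1} after straightening $L$ into a product projection. The only difference is that the paper disposes of the verification that $W'\in\sS_F^{(n)}(p)$ with the phrase ``for $L$ suitably general,'' whereas you spell out the face-by-face dimension count; that count is fine, though one should note that $\pi(W)\cap\pi(G)$ always contains the forced piece $\pi(W\cap G)$, which genericity cannot move off, and then observe that its dimension is $\le\dim G - n$ precisely because $W\in\sS_F^{(n)}(p)$, so that both the forced and the generic contributions stay within the required bound.
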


\begin{proof}  We identify $\Delta^p$ with $\A^p$  via the barycentric coordinates $t_1,\ldots, t_p$. Suppose $W$ has dimension $d<p-n$. Then $d\le p-1$ and, as $F$ is infinite,  a general linear projection $L:\A^p\to \A^{p-1}$ restricts to $W$ to a finite morphism $W\to \A^{p-1}$. In addition,  $W':=L^{-1}(L(W))$ is in $\sS_F^{(n)}(p)$ for $L$ suitably general. Letting $i:W\to W'$ be the inclusion, it suffices to show that the map
\[
i_*:\pi_*E^W(\Delta^p_F)\to \pi_* E^{W'}(\Delta^p_F)
\]
is the zero map. Via an affine linear change of coordinates on $\Delta^p$, we may identify $\Delta^p$ with $\A^{p-1}\times\A^1$ and  $L:\A^p\to \A^{p-1}$ with the projection $\A^{p-1}\times\A^1\to \A^{p-1}$. The result thus follows from lemma~\ref{lem:Homotopy1}.
\end{proof}

Let $(\Delta_F^p,\del\Delta^p)^{(n)}$ be the set of codimension $n$ points $w$ of $\Delta^p_F$ such that $\overline{\{w\}}$ is in $\sS_F^{(n)}(p)$.

\begin{lem}\label{lem:generic}
Let $F$ be an infinite field. Then the restriction maps
\[
E^W(\Delta^p_F)\to \oplus_{w\in (\Delta^p_F,\del\Delta^p)^{(n)}\cap W}E^w(\Spec\sO_{\Delta^p_F,w})
\]
for $W\in \sS_F^{(n)}(p)$ defines an injection
\[
\pi_r(E^{(n)}(F, p))\to \oplus_{w\in (\Delta^p_F,\del\Delta^p)^{(n)}}\pi_rE^w(\Spec\sO_{\Delta^p_F,w})
\]
for each $r\in\Z$.
\end{lem}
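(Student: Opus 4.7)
The plan is to peel off the codimension $n$ generic points of the support one at a time, using the localization distinguished triangle for spectra with supports, until what remains has codimension strictly greater than $n$; at that point lemma~\ref{lem:Homotopy2} forces the image in $\pi_rE^{(n)}(F,p)$ to vanish.

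First I would unwind the filtered homotopy colimit defining $E^{(n)}(F,p)$: an element $\alpha$ in the kernel of the displayed restriction map is represented by some $\tilde\alpha\in\pi_rE^W(\Delta^p_F)$ with $W\in\sS_F^{(n)}(p)$. Let $w_1,\ldots,w_k$ be the generic points of the codimension $n$ components of $W$; each closure $\overline{\{w_j\}}$ is an irreducible component of $W$, hence lies in $\sS_F^{(n)}(p)$, so $w_j\in(\Delta^p_F,\del\Delta^p)^{(n)}$. A brief local argument---any proper generalization of $w_j$ lying in $W$ would force an irreducible component of $W$ to have codimension less than $n$---shows $W\cap\Spec\sO_{\Delta^p_F,w_j}=\{w_j\}$, so $\pi_rE^W(\Spec\sO_{\Delta^p_F,w_j})=\pi_rE^{w_j}(\Spec\sO_{\Delta^p_F,w_j})$. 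Writing this stalk as $\colim_{U\ni w_j}\pi_rE^{W\cap U}(U)$, the vanishing hypothesis produces for each $j$ a Zariski open $U_j\ni w_j$ with $\tilde\alpha|_{U_j}=0$ in $\pi_rE^{W\cap U_j}(U_j)$; since $w_j\notin\overline{\{w_i\}}$ for $i\neq j$, I may shrink $U_j$ to be disjoint from every $\overline{\{w_i\}}$ with $i\neq j$.

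Setting $Z_1:=\Delta^p_F\setminus U_1$, the localization distinguished triangle
\[
E^{W\cap Z_1}(\Delta^p_F)\to E^W(\Delta^p_F)\to E^{W\cap U_1}(U_1)
\]
yields a lift $\beta_1\in\pi_rE^{W\cap Z_1}(\Delta^p_F)$ of $\tilde\alpha$. The closed subset $W\cap Z_1\subset W$ lies in $\sS_F^{(n)}(p)$, and its codimension $n$ generic points are exactly $w_2,\ldots,w_k$. Moreover, the two restriction maps $\pi_rE^{W\cap Z_1}(\Delta^p_F)\to\pi_rE^{w_j}(\Spec\sO_{\Delta^p_F,w_j})$---one direct, one factored through $\pi_rE^W(\Delta^p_F)$---coincide, because $W$ and $W\cap Z_1$ have the same localization $\{w_j\}$ at every surviving $w_j$; thus $\beta_1$ still vanishes at each $w_j$ for $j\geq 2$. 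Iterating $k$ times produces a chain $W=W^{(0)}\supset W^{(1)}\supset\cdots\supset W^{(k)}$ in $\sS_F^{(n)}(p)$ and an element $\beta_k\in\pi_rE^{W^{(k)}}(\Delta^p_F)$ whose canonical image in $\pi_rE^W(\Delta^p_F)$ is $\tilde\alpha$. Since $W^{(k)}$ has no codimension $n$ component, $\codim_{\Delta^p_F}W^{(k)}>n$, and lemma~\ref{lem:Homotopy2} (where the assumption that $F$ is infinite enters) shows that the image of $\beta_k$, and therefore of $\alpha$, in $\pi_rE^{(n)}(F,p)$ is zero.

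The main obstacle is tracking the residual vanishing after each peeling: one must verify at every stage that $W^{(i)}\cap\Spec\sO_{\Delta^p_F,w_j}=\{w_j\}$ for each $w_j$ still present, so that the two restriction maps to the stalk at $w_j$ really do agree and the inductive vanishing propagates to $\beta_{i+1}$. This is ultimately a local geometric check based on the codimension constraints built into $\sS^{(n)}_F(p)$, but it is the crux that makes the iterative peeling well-defined.
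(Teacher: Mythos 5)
Your proof is correct, and it reaches the same terminus as the paper---lemma~\ref{lem:Homotopy2}---but by a genuinely different route. The paper first proves that pure codimension~$n$ subsets are cofinal in $\sS_F^{(n)}(p)$ (using that $\Delta^p_F$ is affine to cut $W$ out by a general complete intersection of $n$ functions), and then passes the localization sequences for pairs $W_0\subset W$ to the filtered colimit to obtain a single long exact sequence
\[
\cdots\to \pi_rE^{(n)}(F,p)_0\xrightarrow{i_{0*}} \pi_rE^{(n)}(F,p)\to \oplus_{w}\pi_rE^w(\Spec\sO_{\Delta^p_F,w})\to\cdots,
\]
where $E^{(n)}(F,p)_0$ is the hocolim over supports of codimension $>n$; injectivity then drops out because lemma~\ref{lem:Homotopy2} kills $i_{0*}$. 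You instead take a class $\tilde\alpha$ in the kernel, peel off the finitely many codimension-$n$ generic points $w_1,\ldots,w_k$ of its support one at a time via successive localization triangles, and track the residual vanishing through the diagram showing the two restrictions to the stalk at each remaining $w_j$ agree; after $k$ peelings the support has codimension $>n$ and lemma~\ref{lem:Homotopy2} finishes. Your route avoids both the cofinality-of-complete-intersections step and the identification of the colimit of the third terms with $\oplus_w\pi_rE^w(\Spec\sO_{\Delta^p_F,w})$, at the cost of a more delicate pointwise induction: you must, as you note, re-select the vanishing opens at every stage since the lift $\beta_i$ a priori only vanishes at the stalks, not on the original $U_j$. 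Both arguments are sound; the paper's is more structural (it produces the exact sequence), while yours is more elementary and arguably reveals more directly where the finiteness of the number of generic points is used.
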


\begin{proof} Take $W\in \sS_F^{(n)}(p)$. Since $\Delta^p_F$ is affine, we can find a $W'\in \sS_F^{(n)}(p)$ of pure codimension $n$ with $W'\supset W$: just take a sufficiently general collection of $n$ functions $f_1,\ldots, f_n$ vanishing on $W$ and let $W'$ be the common zero locus of the $f_i$. Thus the set of pure codimension $n$ subsets $W'$  of $\Delta^p_F$ with $W'\in \sS_F^{(n)}(p)$ is cofinal in  $\sS_F^{(n)}(p)$. 

Let $W\in  \sS_F^{(n)}(p)$ have pure codimension $n$ on $\Delta^p_F$ and let $W_0\subset W$ be any closed subset. Then $W_0$ is also in 
$\sS_F^{(n)}(p)$ and we have the long exact localization sequence
\[
\ldots\to \pi_rE^{W_0}(\Delta^p_F)\xrightarrow{i_{W_0*}} \pi_rE^W(\Delta^p_F)\to \pi_rE^{W\setminus W_0}(\Delta^p_F\setminus W_0)\to\ldots
\]

Let $\sS_F^{(n)}(p)_0\subset  \sS_F^{(n)}(p)$ be the set of all $W_0\in \sS_F^{(n)}(p)$ with $\codim_{\Delta^p_F}W_0>n$. Let
\[
E^{(n)}(F,p)_0=\hocolim_{W_0\in \sS_F^{(n)}(p)_0}E^{W_0}(\Delta^p_F).
\]
Passing to the limit over the above localization sequences gives us the long exact sequence
\[
\ldots\to \pi_rE^{(n)}(F,p)_0\xrightarrow{i_{0*}} \pi_rE^{(n)}(F,p)\to \oplus_{w\in (\Delta_F^p,\del\Delta^p)^{(n)}} \pi_rE^{w}((\Spec\sO_{\Delta^p_F,w}))\to\ldots
\]
By lemma~\ref{lem:Homotopy2}, the  map $i_{0*}$ is the zero map, which proves the lemma.
\end{proof}

Let $S:\Delta^\op\to \Spt$ be a simplicial spectrum, $|S|=\hocolim_{\Delta^\op}S\in \Spt$ the associated spectrum, giving us the spectral sequence
\[
E^1_{p,q}=\pi_qS(p)\Longrightarrow \pi_{p+q}|S|.
\]
This spectral sequence induces an increasing filtration $\Fil^{simp}_*\pi_r|S|$ on $\pi_r|S|$. We have the $q$-truncated simplicial spectrum $S_{\le q}$ and $\Fil^{simp}_q\pi_r|S|$ is just the image of $\pi_r|S_{\le q}|$ in $\pi_r|S|$. In particular $\Fil^{simp}_{-1}\pi_r|S|=0$ and $\cup_{q=0}^\infty\Fil^{simp}_q\pi_r|S|=\pi_rS$, so the spectral sequence is weakly convergent, and is strongly convergent if for instance there is an integer $q_0$ such that $S(p)$ is $q_0$-connected for all $p$.

The isomorphism of theorem~\ref{thm:HC} thus gives us the weakly convergent spectral sequence
\begin{equation}\label{eqn:SimpSpecSeq}
E^1_{p,q}(X,E,n)= \pi_q E^{(n)}(X,p)\Longrightarrow\pi_{p+q}f_nE(X)
\end{equation}
which is strongly convergent if   $\pi_qE^{(n)}(X,p)=0$ for $q\le q_0$, independent of $p$.  This defines the increasing filtration $\Fil^{simp}_*(E)\pi_rf_nE(X)$ of  $\pi_rf_nE(X)$ with associated graded $\gr^{simp}_p(E)\pi_rf_nE(X)=E^\infty_{p,r-p}$.

\begin{lem} \label{lem:Structure} Suppose that $k$ is infinite and  that $\Pi_{a,*}E(K)=0$ for $a<0$ and all fields $K$ over $k$.  Let $F\supset k$ be a field extension of $k$. Then
\begin{enumerate}
\item $E^1_{p,r-p}(F, E,n)=0$ for  $p> r+n$ and $\Fil^{simp}_{r+n}(E)\pi_rf_nE(F)=
\pi_rf_nE(F)$.
\item $E^1_{p,q}(F, E,n)$ is isomorphic to a subgroup of 
$\oplus_{w\in (\Delta^p_F,\del\Delta^p)^{(n)}}\Pi_{q+n,n}E(F(w))$.
\item The spectral sequence \eqref{eqn:SimpSpecSeq} is strongly convergent.
\end{enumerate}
\end{lem}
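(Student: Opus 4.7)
The natural starting point is part (2), since (1) and (3) follow once one knows what the $E^1$-terms look like. Lemma~\ref{lem:generic} applies because $k$, and hence $F$, is infinite, producing an injection
\[
E^1_{p,q}(F,E,n) = \pi_q E^{(n)}(F,p) \hookrightarrow \bigoplus_{w\in (\Delta^p_F,\del\Delta^p)^{(n)}} \pi_q E^w(\Spec\sO_{\Delta^p_F,w}).
\]
It then remains to identify each summand. Each $w$ is a codimension-$n$ point of $\Delta^p_F \cong \A^p_F$; the local inclusion $w \hookrightarrow \Spec\sO_{\Delta^p_F,w}$ is a regular closed embedding in an essentially smooth local scheme, and its normal bundle is a rank-$n$ vector bundle on $\Spec F(w)$, hence trivial. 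Motivic purity (extended to essentially smooth henselian local schemes, or equivalently via Nisnevich excision reducing to the Morel--Voevodsky purity theorem) then gives a weak equivalence $E^w(\Spec\sO_{\Delta^p_F,w}) \simeq \sHom(\Sigma^\infty_{S^1}(\Spec F(w)_+ \wedge T^n),\, E)$ in $\SH$. Unwinding, and using $T^n = S^n \wedge \G_m^{\wedge n}$, this gives $\pi_q E^w(\Spec\sO_{\Delta^p_F,w}) \cong \Pi_{q+n,n}E(F(w))$, which is part (2).

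For parts (1) and (3), the hypothesis $\Pi_{a,*}E(K)=0$ for $a<0$ combined with (2) yields
\[
E^1_{p,q}(F,E,n) = 0 \quad \text{whenever } q < -n,
\]
uniformly in $p$. Thus each $E^{(n)}(F,p)$ is $(-n-1)$-connected, with $q_0 = -n-1$ independent of $p$. By the convergence criterion recorded just before the lemma, this uniform connectivity guarantees strong convergence of \eqref{eqn:SimpSpecSeq}, proving (3). Specializing the vanishing to $q = r-p$ gives $E^1_{p,r-p}=0$ for $p > r+n$, and strong convergence then forces $\Fil^{simp}_{r+n}(E)\pi_r f_n E(F) = \pi_r f_n E(F)$, which is (1).

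The only non-formal ingredient is the purity identification in part (2): one needs to know that motivic purity applies to the regular closed embedding $w \hookrightarrow \Spec\sO_{\Delta^p_F,w}$ of an essentially smooth local $k$-scheme, and one needs to be careful that the trivialization of the normal bundle, the identification with $\Omega^n_T E(F(w))$, and the passage to the pro-object defining $E(F(w))$ are all compatible. Everything else is a direct application of the convergence hypothesis on $E$ together with the general behavior of simplicial spectral sequences with uniformly connective terms.
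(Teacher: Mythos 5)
Your proof follows the same route as the paper: apply Lemma~\ref{lem:generic} together with Morel--Voevodsky purity at each codimension-$n$ point $w$ to identify each summand with $\Pi_{q+n,n}E(F(w))$, proving (2), and then deduce (1) and (3) from (2) by the connectivity hypothesis on $E$. The only cosmetic difference is that you derive (1) via strong convergence (i.e. after (3)) whereas the paper obtains it directly from weak convergence and the $E^1$-vanishing for $p>r+n$; both are fine.
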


\begin{proof} Since the spectral sequence is weakly convergent, to prove (3) it suffices to show that $E^1_{p,q}(F, E,n)=0$ for $q<-n$ and for 
 (1) it suffices to show that $E^1_{p,r-p}=0$ for $p> r+n$. These both follows from (2)  as our hypothesis implies that $\Pi_{a,*}E(F(w))=0$ for $a<0$, $w\in \Delta^p_F$. 

For (2), lemma~\ref{lem:generic} gives us an inclusion
\[
E^1_{p,q}=\pi_{q}E^{(n)}(F,p)\subset  \oplus_{w\in (\Delta^p_F,\del\Delta^p)^{(n)}}\pi_{q}E^w(\Spec\sO_{\Delta^p_F,w}).
\]
Take $w\in (\Delta^p_F,\del\Delta^p)^{(n)}$. By the Morel-Voevodsky purity isomorphism \cite[\hbox{\it loc.cit.}]{MorelVoev}, we have $E^w(\Spec\sO_{\Delta^p_F,w})\cong \sHom(\Sigma^n_{S^1}\Sigma^n_{\G_m} w_+,E)$, hence
\[
\pi_{q}E^w(\Spec\sO_{\Delta^p_F,w})\cong \Pi_{q+n,n}E(F(w)),
\]
which proves (2).
\end{proof}

For a field extension $K$ of $k$, we write $\trdim_kK$ for the transcendence dimension of $K$ over $k$. 

\begin{lem}\label{lem:Induction} Let $E$ be in $\SH_{S^1}(k)$ and suppose  $\Pi_{a,*}E(K)=0$ for $a<0$ and all fields $K$ over $k$. Let $p$ be the exponential characteristic of $k$.  Let $r$ be an integer. Suppose we have functions 
\[
(d,q)\mapsto N_j(d,q; E)\ge0;\  d, q\ge0, j=0,\ldots, r-1,
\]
such that, for each  field extension  $K$ of $k$ with $\trdim_kK\le d$, each $j=0,\ldots, r-1$,   and all integers  $q, M\ge0$, $m\ge N_j(d, q; E)$, 
we have
\begin{equation}\label{eqn:IndAssump}
F^m_\Tate \Pi_{j,q}f_ME(K)[1/p]=0.  
\end{equation}
Let $F$ be a field extension of $k$ with $\trdim_kF\le d$ and fix an integer $n\ge0$. For $r>0$, let $N=\max_{j=0}^{r-1}N_j(r-j+d,n; E)$; for $r\le 0$, set $N=0$. Then for  all integers $m\ge N$,  $n\ge0$,  , we have
\[
\pi_r(f_n(f_mE))(F))[1/p]= \Fil^{simp}_n(f_mE)\pi_r(f_n(f_mE))(F)[1/p].
\]
\end{lem}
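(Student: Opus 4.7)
The plan is to apply the simplicial spectral sequence \eqref{eqn:SimpSpecSeq} with the $S^1$-spectrum $f_m E$ in place of $E$, and to show that after inverting the exponential characteristic every $E^1$-term contributing to $\pi_r f_n(f_m E)(F)$ above simplicial filtration level $n$ already vanishes. Strong convergence will then give the desired equality.

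First I would verify that $f_m E$ inherits the connectivity hypothesis $\Pi_{a,*}(f_m E)(K) = 0$ for $a < 0$ and all fields $K$ over $k$; this is the standard fact that the $\SH_{S^1}(k)$-truncation $f_m$ preserves $(-1)$-connectivity in the $S^1$-direction, in the spirit of the $\mathrm{Vanishing1}$ proposition referenced in the introduction. Granted this, lemma~\ref{lem:Structure} applies to $f_m E$ and one gets a strongly convergent spectral sequence (I use $a$ for the simplicial index to avoid clashing with the exponential characteristic $p$):
$$E^1_{a,q}(F, f_m E, n) \;=\; \pi_q (f_m E)^{(n)}(F,a) \Longrightarrow \pi_{a+q} f_n(f_m E)(F).$$
By lemma~\ref{lem:SliceCompat}, $f_m(f_m E) \cong f_m E$, so the inductive assumption \eqref{eqn:IndAssump} applied with $M=m$ reads simply
$$\Pi_{j,q}(f_m E)(K)[1/p] \;=\; 0 \quad\text{whenever } 0\le j\le r-1,\ q\ge 0,\ \trdim_k K \le d',\ m \ge N_j(d',q;E).$$

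It then suffices to show $E^1_{a, r-a}(F, f_m E, n)[1/p] = 0$ for every $a > n$. If $a > r+n$, lemma~\ref{lem:Structure}(1) already gives the vanishing integrally. For $r > 0$ and $n < a \le r+n$, set $j := r+n-a$, so $0 \le j < r$. Lemma~\ref{lem:Structure}(2) embeds $E^1_{a, r-a}(F, f_m E, n)$ into $\bigoplus_{w \in (\Delta^a_F, \partial\Delta^a)^{(n)}} \Pi_{j, n}(f_m E)(F(w))$, and each such $w$ has codimension $n$ in $\Delta^a_F$, so $\trdim_k F(w) \le (a-n)+d = (r-j)+d$. The reformulated hypothesis with $d' = r-j+d$ and $q = n$ then yields $\Pi_{j, n}(f_m E)(F(w))[1/p] = 0$ as soon as $m \ge N_j(r-j+d, n; E)$, which is guaranteed by $m \ge N$. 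For $r \le 0$ one has $r+n \le n$, so lemma~\ref{lem:Structure}(1) directly forces $\Fil^{simp}_{r+n}(f_m E)\pi_r f_n(f_m E)(F) = \pi_r f_n(f_m E)(F)$ without any need to invert anything.

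The main obstacle is essentially bookkeeping: aligning the transcendence-degree bound $\trdim_k F(w) \le r-j+d$ with the precise shape $N = \max_{0 \le j < r} N_j(r-j+d, n; E)$ in the statement, and confirming that the connectivity hypothesis propagates from $E$ to $f_m E$ so that lemma~\ref{lem:Structure} remains applicable. Once these are in place, the spectral-sequence computation itself is routine.
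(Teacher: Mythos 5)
Your proposal mirrors the paper's own argument almost exactly: propagate the connectivity bound to $f_mE$, collapse the Tate filtration $F^m_\Tate\Pi_{j,q}f_mE$ to the whole group via lemma~\ref{lem:SliceCompat}, feed this into lemma~\ref{lem:Structure} to kill all $E^1$-terms of simplicial degree strictly between $n$ and $r+n$, and conclude by strong convergence. The bookkeeping $j = r+n-a$, $\trdim_k F(w)\le r-j+d$ is identical.

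There is, however, one genuine gap. Lemma~\ref{lem:Structure} (and the lemmas~\ref{lem:Homotopy2} and~\ref{lem:generic} feeding into it) carries the hypothesis that $k$ is \emph{infinite}: one needs enough general linear projections $\A^p\to\A^{p-1}$ to push a codimension-$>n$ support to a cylinder. You apply lemma~\ref{lem:Structure} unconditionally, so as written your argument only proves the statement for infinite perfect $k$. The paper handles finite $k$ by a separate reduction: fix a prime $\ell$, pass to the pro-$\ell$ extension $k_\ell$ (which is infinite), prove the result there, then descend to $k$ after inverting $\ell$ using the transfer of proposition~\ref{prop:InvertP}(2); repeating with a second prime $\ell'\neq\ell$ gives the result integrally (after inverting the exponential characteristic). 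Without this reduction your proof is incomplete. A smaller point: the connectivity-preservation you need ($\Pi_{a,*}E=0$ for $a<0$ implies the same for $f_mE$) is not the content of proposition~\ref{prop:Vanishing1}; it is \cite[proposition 3.2]{LevineGW}, which the paper cites at exactly this step.
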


\begin{proof} We delete the ``$[1/p]$" from the notation in the proof, using the convention that we have inverted the exponential characteristic $p$ throughout.  

If $k$ is a finite field, fix a prime $\ell$ and let $k_\ell$ be the union of all $\ell$-power extensions of $k$. If we know the result for $k_\ell$, then using  proposition~\ref{prop:InvertP}(2)  for each $k\subset k'\subset k_\ell$ with $k'$ finite over $k$ proves the result for $k$, after inverting $\ell$. Doing the same for some $\ell'\neq \ell$, we reduce to the case of an infinite field $k$.

By \cite[proposition 3.2]{LevineGW}, the hypothesis  $\Pi_{a,*}E(K)=0$ for $a<0$ and all $K$  implies $\Pi_{a,*}f_m E(K)=0$ for $a<0$, all $K$,  and all $m\ge0$. In particular, $\pi_r(f_n(f_mE))(F))=0$ for $r<0$ and all $n,m\ge0$, so for $r<0$, the lemma is trivially true. We therefore assume $r\ge0$. In addition, it  follows from lemma~\ref{lem:Structure}(1) that  the spectral  sequence \eqref{eqn:SimpSpecSeq},  $E^*_{*,*}(F, f_mE,n)$, is strongly convergent for all $m, n\ge0$ and $E^1_{p, r-p}(F, f_mE,n)=0$ for $p>r+n$. Since $\Fil^{simp}_*(f_m E)\pi_r(f_n(f_mE))(F)$ is by definition the filtration on $\pi_r(f_n(f_mE))(F)$ induced by this spectral sequence, we need only show that
\[
E^1_{p, r-p}(F, f_mE, n)=0\text{ for }n<p\le r+n\text{ and } m\ge N.
\]
In particular, the result is proved for $r=0$; we now assume $r>0$.

Let $p$ be an integer, $n<p \le r+n$. By lemma~\ref{lem:Structure}(2), 
\[
E^1_{p,r-p}(F, f_mE,n)\subset \oplus_{w\in (\Delta^p_F,\del\Delta^p)^{(n)}}\Pi_{r-p+n,n}f_mE(k(w)).
\]
For $w\in (\Delta^p_F,\del\Delta^p)^{(n)}$,  $w$ has codimension $n$ on $\Delta^p_F$, hence $\trdim_Fk(w)=p-n$  and thus   $\trdim_kk(w)\le p-n+d$. We have  $m\ge N_{r-p+n}(p-n+d, n; E)$ since $0\le r-p+n<r$, and so our hypothesis \eqref{eqn:IndAssump} implies
\[
F^m_\Tate \Pi_{r-p+n,n}f_mE(k(w))=0.
\]
But $F^m_\Tate \Pi_{a, b}f_mE(k(w))=\im(\Pi_{a,b}f_mf_mE(k(w))\xrightarrow{\rho_m(f_mE)_*} \Pi_{a,b}f_mE(k(w)))$ by definition,  and  $\rho_m(f_mE)$ is an isomorphism (lemma~\ref{lem:SliceCompat}), hence 
\[
F^m_\Tate \Pi_{a,b}f_mE(k(w))=\Pi_{a,b}f_mE(k(w)).
\]
Thus $\Pi_{r-p+n,n}f_mE(k(w))=0$ and hence $E^1_{p,r-p}=0$ for $n<p \le r+n$, as desired. 
\end{proof}

\section{The bottom of the filtration}\label{sec:Bottom}
In this section, $k$ will be a fixed perfect base field. We study the subgroup $\Fil^{simp}_n(E)\pi_rf_nE(F)$  isolated in lemma~\ref{lem:Induction}.

\begin{lem}\label{lem:TrivVan} Let $E$ be in $\SH_{S^1}(k)$.  Then
$\Fil^{simp}_{n-1}(E)\pi_rf_nE(F)=0$ for all fields $F$ over $k$.
\end{lem}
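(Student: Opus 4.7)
The plan is to reduce the statement to a trivial dimension count on the indexing poset $\sS_F^{(n)}(p)$ for $p<n$. By theorem~\ref{thm:HC} (applied after taking a quasi-fibrant representative of $E$), the simplicial filtration $\Fil^{simp}_*(E)\pi_r f_n E(F)$ is exactly the filtration on $\pi_r|S|$ induced by the simplicial skeleta of the simplicial spectrum $S \: p \mapsto E^{(n)}(F,p)$. Concretely, $\Fil^{simp}_{n-1}(E)\pi_r f_n E(F)$ is the image in $\pi_r|S|$ of $\pi_r|S_{\le n-1}|$, so it suffices to show that the truncated realization $|S_{\le n-1}|$ is contractible.

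For this, I would inspect the poset $\sS_F^{(n)}(p)$ for each $p$ with $0\le p\le n-1$. By definition, elements are closed subsets $W\subset \Delta^p_F$ with $\codim_{\Delta^p_F} W \ge n$ (and an analogous condition on all faces, which becomes vacuous in this range). Since $\dim \Delta^p_F = p \le n-1 < n$, the only closed subset of codimension $\ge n$ is $W=\emptyset$, so $\sS_F^{(n)}(p)=\{\emptyset\}$. The spectrum with supports $E^{\emptyset}(\Delta^p_F)$ is the homotopy fibre of $E(\Delta^p_F)\xrightarrow{\;\id\;} E(\Delta^p_F\setminus\emptyset)$, hence contractible. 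Consequently
\[
E^{(n)}(F,p) \;=\; \hocolim_{W\in \sS_F^{(n)}(p)} E^W(\Delta^p_F) \;\simeq\; *
\]
for all $p\le n-1$.

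Thus the simplicial spectrum $S_{\le n-1}$ is degreewise contractible, so $|S_{\le n-1}|\simeq *$ and its image in $\pi_r|S|\cong \pi_r f_n E(F)$ is zero, which is the desired vanishing. I do not anticipate any real obstacle here: the argument is just the observation that the homotopy coniveau tower's simplicial degrees $p<n$ are empty for codimension reasons, and this is the very source of the indexing convention $f_n\leftrightarrow E^{(n)}$.
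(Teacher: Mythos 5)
Your proof is correct and takes essentially the same approach as the paper: identify $\Fil^{simp}_{n-1}$ as the image of $\pi_r|S_{\le n-1}|$ and observe that $E^{(n)}(F,p)$ is contractible for $p<n$ by a codimension count on $\Delta^p_F$. The only cosmetic difference is that the paper treats $\sS_F^{(n)}(p)$ as the empty poset for $p<n$, while you allow $W=\emptyset$ as an element and note $E^{\emptyset}(\Delta^p_F)\simeq *$; both readings give $E^{(n)}(F,p)\simeq *$ and hence the same conclusion.
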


\begin{proof}  For any $X\in\Sm/k$, $\Fil^{simp}_q\pi_rE^{(n)}(X)$ is by definition the image in $\pi_rE^{(n)}(X)$ of $\pi_r|E^{(n)}(X,-\le q)|$, 
where $E^{(n)}(X,-\le q)$ is the $q$-truncated simplicial spectrum associated to $E^{(n)}(X,-)$. For $X=\Spec F$, we clearly have $\sS_F^{(n)}(p)=\0$ for $p<n$, as $\Delta^p_F$ has no closed subsets of codimension $>p$. Thus $|E^{(n)}(X,-\le q)|$ is the 0-spectrum for $q<n$ and hence $\Fil^{simp}_{n-1}\pi_rE^{(n)}(F)=0$.
\end{proof}

To study the first non-zero layer $\Fil^{simp}_n(E)\pi_rf_nE(F)$ in $\Fil^{simp}_*(E)\pi_rf_nE(F)$, we apply the results of \cite{LevineGW}. For this, we  recall some of these results and constructions.

We let $V_n=(\Delta^1_F\setminus\del\Delta^1)^n$. The function $-t_1/t_0$ on $\Delta^1$ gives an open immersion $\rho_n:V_n\to \A^n$, identifying $V_n$ with $(\A^1\setminus\{0,1\})^n$. 

Suppose that $E$ is an $n$-fold $T$-loop spectrum, that is, there is an object $\omega_T^{-n}E\in\Spt(k)$ and an isomorphism $E\cong \Omega_T^n\omega_T^{-n}E$ in $\SH_{S^1}(k)$. Given an $n$-fold delooping $\omega_T^{-n}E$ of $E$ , we have explained in \cite[\S 5]{LevineGW} how to construct a ``transfer map''
\[
\Tr_{F(w)/F}^*:\pi_*E(w)\to \pi_*E(F),
\]
for each closed point $w\in \A^n_{F}$, separable over $F$.

If now $E=\Omega^\infty_T\sE$ for some $T$-spectrum $\sE\in\SH(k)$, then the bi-graded homotopy sheaves $\Pi_{*,*}E$ admit a canonical right action by the bi-graded homotopy sheaves of the sphere spectrum $\mS_k\in\SH(k)$:
\[
\Pi_{a,b}E\otimes\Pi_{p,q}(\mS_k)\to \Pi_{a+p, b+q}E
\]

Hopkins and Morel \cite[theorem 6.3.3]{MorelLec} have defined a graded ring homomorphism 
\begin{equation}\label{eqn:HopkinsMorel}
\theta_*(F):\oplus_{n\in\Z}K^{MW}_n(F)\to \oplus_{n\in\Z}\Pi_{0,-n}\mS_k(F),
\end{equation}
natural in the field extension $F$ of $k$.\footnote{A result of Morel \cite[corollary 6.41]{MorelA1} implies that  $\theta_*(F)$ is an isomorphism for all fields $F$,  but we will not need this.} Via $\theta_*(F)$, the right $\Pi_{p,q}(\mS_k)$-action gives a right action, natural in $F$,
\[
\Pi_{a,b}E(F)\otimes K^{MW}_*(F) \to \Pi_{a, b-*}E(F).
\]
This gives us the filtration $F_{MW}^n\Pi_{a,b}E(F)$ of $\Pi_{a,b}E$, defined by
\[
F_{MW}^n\Pi_{a,b}E(F):=\text{im}[\Pi_{a,b+n}E(F)\otimes K^{MW}_{n}(F)\to \Pi_{a,b}E(F)];\quad n\ge 0.
\]
Completing with respect to the transfer maps gives us the filtration  $F^{n}_{MW^{Tr}}\Pi_{a,b}E(F)$:
\begin{Def}[see \hbox{\cite[definition 7.9]{LevineGW}}] Let $E=\Omega_T^\infty\sE$ for some $\sE\in \SH(k)$, $F$ a field extension of $k$. Take integers $a, b, n$ with $n, b\ge0$.  Let  $F^{n}_{MW^{Tr}}\Pi_{a,b}E(F)$ denote the subgroup of $\Pi_{a,b}E(F)$ generated by elements of the form
\[
Tr_F(w)^*(x);\quad x\in F_{MW}^{n}\Pi_{a,b}E(F(w))
\]
as $w$ runs over closed points of $\A^n_F$, separable over $F$. We write $F^*_{MW^{Tr}}\pi_rE(F)$ for the filtration $F^*_{MW^{Tr}}\Pi_{r,0}E(F)$ on $\pi_rE(F)=\Pi_{r,0}E(F)$
\end{Def}

Theorem 7.11 of \cite{LevineGW} expresses the ``Tate-Postnikov'' filtration $F_\Tate^*\pi_{0}E(F)$ on $\pi_{0}E(F)$ in terms of the ``Milnor-Witt'' filtration $F_{MW}^*\pi_0E(F)$, under the connectivity assumption $\Pi_{a,*}E=0$ for $a<0$. With some minor changes,  the proof of  this result  goes through to show:

\begin{thm}\label{thm:GWSliceRevisited} Let  $\sE$ be in $\SH(k)$ and let $E=\Omega^\infty_T\sE$. Suppose that $\Pi_{a,*}E=0$ for $a<0$. Let $F$ be a perfect field extension of $k$. Then for all $r\ge0$, $n\ge0$,  we have an equality of subgroups of $\Pi_{r,q}E(F)$:
\[
\rho_n(E)(\Fil^{simp}_n(E)\pi_rf_nE(F))=F^n_{MW^{Tr}}\pi_{r}E(F)
\]
\end{thm}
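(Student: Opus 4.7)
The plan is to adapt the argument of \cite[Theorem 7.11]{LevineGW}, which establishes exactly this statement in the case $r=0$; the geometric inputs---Morel--Voevodsky purity, the homotopy coniveau spectral sequence \eqref{eqn:SimpSpecSeq}, and the transfer construction of \cite[\S 5]{LevineGW}---are uniform in $r\ge 0$, so the generalization is essentially formal once one verifies that the symbol-theoretic identification carries over.

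First I would set up the generic description. By Lemma~\ref{lem:TrivVan} we have $\Fil^{simp}_{n-1}(E)\pi_rf_nE(F)=0$, so $\Fil^{simp}_n(E)\pi_rf_nE(F)$ is the full image in $\pi_rf_nE(F)$ of $\pi_{r-n}E^{(n)}(F,n)$ under the simplicial edge map. By Lemma~\ref{lem:generic} together with the Morel--Voevodsky purity identification already used in Lemma~\ref{lem:Structure}(2), there is an injection
\[
\pi_{r-n}E^{(n)}(F,n)\hookrightarrow\bigoplus_{w\in(\Delta^n_F,\del\Delta^n)^{(n)}}\Pi_{r,n}E(F(w)),
\]
and each such $w$ is separable over $F$ since $F$ is perfect. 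Under the identification $V_n=(\Delta^1_F\setminus\del\Delta^1)^n\cong (\A^1\setminus\{0,1\})^n\subset\A^n_F$ via $t_i\mapsto -t_i/t_0$, a closed point $w\in V_n$ acquires residue coordinates $u_i\in F(w)^\times$ defining a symbol $\sigma_w:=\{u_1,\ldots,u_n\}\in K^{MW}_n(F(w))$.

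For the inclusion $F^n_{MW^{Tr}}\pi_rE(F)\subseteq\rho_n(E)(\Fil^{simp}_n(E)\pi_rf_nE(F))$, given a pair $(w,\alpha)$ with $w\in V_n$ closed and $\alpha\in\Pi_{r,n}E(F(w))$, purity lifts $\alpha$ to a class in $\pi_{r-n}E^w(\Spec\sO_{\Delta^n_F,w})$, which maps canonically to $\pi_{r-n}E^{(n)}(F,n)$ and so to an element of $\Fil^{simp}_n\pi_rf_nE(F)$. A direct computation with the Hopkins--Morel ring map $\theta_*$ and the transfer of \cite[\S 5]{LevineGW} shows that the image of this lift under $\rho_n(E)$ equals $\Tr^*_{F(w)/F}(\alpha\cdot\theta_n(\sigma_w))$. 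Since every generator of $F^n_{MW^{Tr}}\pi_rE(F)$ can be rewritten as a sum of such terms---using the Steinberg and multilinearity relations for $K^{MW}_*$ together with the composability of transfers in towers of finite separable extensions---this yields the first inclusion. Conversely, for $\subseteq$, any $\beta\in\Fil^{simp}_n\pi_rf_nE(F)$ lifts to $\tilde\beta\in\pi_{r-n}E^{(n)}(F,n)$ determined by its components $\beta_w\in\Pi_{r,n}E(F(w))$ under the injection above, and the same computation gives $\rho_n(E)(\beta)=\sum_w\Tr^*_{F(w)/F}(\beta_w\cdot\theta_n(\sigma_w))\in F^n_{MW^{Tr}}\pi_rE(F)$.

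The hard step will be the explicit identification of the composite $\pi_{r-n}E^{(n)}(F,n)\to\pi_rf_nE(F)\xrightarrow{\rho_n(E)}\pi_rE(F)$ with the sum-of-transfers-times-symbols formula, together with the realization of every generator of $F^n_{MW^{Tr}}$ via the $V_n$ construction. Both are carried out in \cite[\S 5--\S 7]{LevineGW}: one unwinds the Morel--Voevodsky purity isomorphism, analyzes the $1$-skeleton boundary maps of $E^{(n)}(F,-)$ (which encode exactly the Steinberg and skew-symmetry relations defining $K^{MW}_*$), and uses the $T$-delooping $\omega_T^{-n}E$ supplied by $E=\Omega^\infty_T\sE$ to construct the transfer. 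None of these arguments use the constraint $r=0$: they depend only on the spectrum structure of $E$ and on the naturality of purity in $r$, so they carry over verbatim. The connectivity hypothesis $\Pi_{a,*}E=0$ for $a<0$ enters only to guarantee the strong convergence of \eqref{eqn:SimpSpecSeq} (via Lemma~\ref{lem:Structure}) and to discard negative-degree contributions to the simplicial filtration.
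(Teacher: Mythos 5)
Your proposal follows the same route as the paper's own (quite terse) sketch: both adapt \cite[Theorem 7.11]{LevineGW} by replacing $\pi_0$ with $\pi_r$ and replacing $F^n_\Tate\pi_0(E)(F)$ with $\rho_n(E)(\Fil^{simp}_n(E)\pi_{r}f_nE(F))$, relying on the fact that the purity, transfer, and homotopy-coniveau ingredients are uniform in $r$. One small point worth making explicit (which the paper does flag, and your write-up is silent on but implicitly respects): the argument should only invoke $\theta_*(F)$ as a graded ring homomorphism sending $[u]\in K^{MW}_1$ to the class of $u:\Spec F\to\G_m$, and not as an isomorphism, so as to avoid the dependence on \cite[Corollary 6.41]{MorelA1}.
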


\begin{proof}[Sketch of proof.] We briefly indicate the changes that need to be made in the arguments for theorem 7.11 {\it loc.\,cit.}: In \cite[prop. 4.3 and thm. 7.6]{LevineGW}, replace $\pi_0$ with $\pi_r$ and $F^n_\Tate\pi_0(E)(F)$ with $\rho_n(E)(\Fil^{simp}_n(E)\pi_{r}f_nE(F))$. Also, instead of using the fact that $\theta_*(F)$ is an isomorphism, we need only use that   $\theta_*(F)$ is a ring homomorphism, and that for $u\in F^\times$, $\theta_1([u])$ is the element of $\Pi_{0,-1}\mS_k(F)$ coming from the map $u:\Spec F\to \G_m$ corresponding to $u$ (see \cite[theorem 6.3.3]{MorelLec}).
\end{proof}

\section{Finite spectra and cohomologically finite spectra}\label{sec:MotCoh}
In this section we will have occasion to use some localizations of $\SH(k)$ with respect to multiplicatively closed subsets of $\Z\setminus\{0\}$. To make our discussion precise, we have collected notations and  some elementary  facts  concerning such localizations in Appendix~\ref{sec:Localization}; we will use these without further mention in this section.

For a field $F$, let  $F^{sep}$ denote the separable closure of $F$ and $G_F$ the absolute Galois group $\Gal(F^{sep}/F)$. For $p$ a prime, let $cd_p(G_F)$ denote  the $p$-cohomological dimension of the profinite group $G_F$, as defined in \cite[I, \S 3.1]{Serre}. We write $cd_p(F)$ for $cd_p(G_F)$, and call $cd_p(F)$ the  {\em $p$-cohomological dimension} of $F$.  The cohomological dimension of $F$, $cd(F)$,  is the supremum of the $cd_p(F)$ over all primes $p$.

We turn to our main topic in this section: the study of finite and cohomologically finite objects in $\SH(k)$.

\begin{Def} 1. Let $\SH(k)_\fin(k)$ be the thick subcategory of  $\SH(k)$ generated by objects $\Sigma_T^n\Sigma^\infty_TX_+$ for $X$ a smooth projective $k$-scheme and $n\in \Z$.\\
2. Let $\SH(k)_\cfin(k)$ be the full subcategory of $\SH(k)$ with objects those $\sE$ such that 
\begin{enumerate}
\item[(i)]  there is an integer $d$ such that, for $n>d$,  $\Pi_{r,n}(\sE)_\Q=0$ for all $r$,
\item[(ii)] there is an integer $c$ such that $\Pi_{r, q}\sE=0$ for $r\le c$, $q\in\Z$.
\end{enumerate}
3. For $\sE\in  \SH(k)_\cfin(k)$,  define $d(\sE)$ to be the infimum among integers $d$  such that $\Pi_{r,n}(\sE)_\Q=0$ for all $r$ and for all $n>d$. \\
4. We say that $\sE\in \SH(k)$ is {\em topologically $c$-connected} if $\Pi_{r, *}\sE=0$ for $r\le c$. We let $c(\sE)$ be the supremum among integers $c$  such that  $\sE$ is topologically $c$-connected. \\ 
5. For $E\in \SH_{S^1}(k)$, we say that $E$ is topologically $c$-connected if $\Pi_{r, n}E=0$ for $r\le c$ and $n\ge0$; we say that $E$ is $c$-connected if $\pi_rE=0$ for $r\le c$.
\end{Def}

\begin{rems} 1. $\SH(k)_\cfin(k)$ is a thick subcategory of $\SH(k)$.\\
2. As $\Sigma_T^n\Sigma^\infty_TX_+$  is compact for all $n\in \Z$, $X\in \Sm/k$,  $\SH(k)_\fin(k)$ is contained in the category $\SH(k)^c$ of compact objects in $\SH(k)$. If $k$ admits resolution of singularities, it is not hard to show that $\Sigma_T^n\Sigma^\infty_TX_+$ is in 
$\SH(k)_\fin(k)$ for all $X\in \Sm/k$, $n\in\Z$; as these objects (and their translates) form a set of compact generators for $\SH(k)$, it follows that 
$\SH(k)_\fin(k)=\SH(k)^c$ if $k$ admits resolution of singularities, that is, if $k$ has characteristic zero.
\end{rems}

\begin{rem}\label{rem:ConnectTopConnect} For a presheaf $A$ of abelian groups on $\Sm/k$, we have the presheaf $A_{-1}$ defined by 
\[
A_{-1}(U):=\ker(i_1^*:A(U\times \A^1\setminus\{0\})\to A(U)).
\]

Let $E$ be in $\SH_{S^1}(k)$. By \cite[lemma 4.3.11]{MorelLec}, there is a natural isomorphism $\pi_n(\Omega_{\G_m}E)\cong (\pi_n(E))_{-1}$. In particular, if $E$ is $c$-connected, then so is $\Omega_{\G_m}E$. As $\Pi_{a,b}E=\Pi_{a,b-1}\Omega_{\G_m}E$ and $\Pi_{r,0}=\pi_r$, we see that  $E$ is $c$-connected if and only if $E$ is topologically $c$-connected. 
\end{rem}

\begin{lem}\label{lem:Boundedness} Take  $\sE\in \SH_\fin(k)$. Then there is an integer $n(\sE)$ such that $\rho_n(\sE):f_n(\sE)\to \sE$ is an isomorphism for all $n\le n(\sE)$.
\end{lem}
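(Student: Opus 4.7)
The plan is to show that the full subcategory $\sC \subset \SH(k)$ of objects $\sE$ for which such an $n(\sE)$ exists is a thick subcategory containing all objects of the form $\Sigma_T^n \Sigma^\infty_T X_+$ for $X$ smooth projective over $k$ and $n \in \Z$; this forces $\SH_\fin(k) \subset \sC$ by the definition of $\SH_\fin(k)$.

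First I would reformulate the condition. By the universal property defining $f_n$ (and as already invoked in the proof of lemma~\ref{lem:SliceCompat}), the counit $\rho_n(\sE)\:f_n\sE \to \sE$ is an isomorphism if and only if $\sE \in \Sigma_T^n\SH^\eff(k)$. Hence
\[
\sC \;=\; \bigcup_{n_0 \in \Z} \Sigma_T^{n_0}\SH^\eff(k),
\]
and $\sE \in \sC$ with parameter $n(\sE) = n_0$ exactly means $\sE \in \Sigma_T^{n_0}\SH^\eff(k)$.

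Next I would check that $\sC$ is thick. Note that for $n \le m$ one has the inclusion $\Sigma_T^m\SH^\eff(k) \subset \Sigma_T^n\SH^\eff(k)$, since the generators $\Sigma_T^j\Sigma^\infty_T Y_+$ with $j \ge m$ are in particular generators with $j \ge n$. So $\sC$ is a filtered union of localizing (hence triangulated and thick) subcategories. Given a distinguished triangle $\sE' \to \sE \to \sE''$ with $\sE' \in \Sigma_T^{n'}\SH^\eff(k)$ and $\sE'' \in \Sigma_T^{n''}\SH^\eff(k)$, setting $n_0 = \min(n',n'')$ places both outer terms in the single triangulated subcategory $\Sigma_T^{n_0}\SH^\eff(k)$, whence $\sE$ is there too, and retracts are handled by thickness of each $\Sigma_T^{n_0}\SH^\eff(k)$.

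It remains to observe that the generators lie in $\sC$. For $X$ smooth projective, $\Sigma^\infty_T X_+ \in \SH^\eff(k) = \Sigma_T^0 \SH^\eff(k)$ by definition, so $\Sigma_T^n \Sigma^\infty_T X_+ \in \Sigma_T^n\SH^\eff(k)$, and one can take $n(\Sigma_T^n\Sigma^\infty_T X_+) = n$. Since $\SH_\fin(k)$ is by definition the thick subcategory generated by these objects, $\SH_\fin(k) \subset \sC$, completing the proof. The statement is a purely formal consequence of the definitions once one keeps straight the reversed direction of the inclusions $\Sigma_T^n \SH^\eff(k) \supset \Sigma_T^m \SH^\eff(k)$ for $n \le m$; there is no real obstacle to overcome.
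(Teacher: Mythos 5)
Your proof is correct and follows essentially the same strategy as the paper: reduce to the generators $\Sigma_T^n\Sigma^\infty_T X_+$ via the thick-subcategory structure and observe these lie in $\Sigma_T^n\SH^\eff(k)$. The paper phrases the reduction more tersely (``as $f_n$ is exact'') and then invokes $f_n\circ\Sigma_T^m = \Sigma_T^m\circ f_{n-m}$, whereas you make the thickness of $\sC = \bigcup_{n_0}\Sigma_T^{n_0}\SH^\eff(k)$ fully explicit via the nested-union observation --- a cleaner presentation of the same idea.
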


\begin{proof}  As $f_n$ is exact, it suffices to prove the result for $\sE=\Sigma_T^m\Sigma^\infty_TX_+$ with $X$ smooth and projective over $k$ and $m\in\Z$. As $f_n\circ \Sigma_T^m=\Sigma_T^m\circ f_{n-m}$, we need only prove the result for 
$\sE=\Sigma^\infty_TX_+$.  But  $\Sigma^\infty_TX_+$ is in $\SH^\eff(k)$,  so  $\rho_n(\Sigma^\infty_TX_+)$ is an isomorphism  for all $n\le 0$.\end{proof}

\begin{lem}\label{lem:CohFinVan} Let $\sE$ be in $\SH(k)_\cfin(k)$. \\
1. For  $U\in \Sm/k$, we have $\Hom_{\SH(k)_\Q}((\Sigma^p_{S^1}\Sigma^q_{\G_m}\Sigma^\infty_TU_+)_\Q,\sE_\Q)=0$
for all $p\in\Z$, $q>d(\sE)$.
\\
2. For all $n> d(\sE)$, $(f_n\sE)_\Q\cong 0$ in $\SH(k)_\Q$.
\end{lem}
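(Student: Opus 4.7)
The plan is to prove (1) by a strongly convergent Nisnevich descent spectral sequence, and then to deduce (2) from (1) using the universal property of $f_n$ together with the compact generation of $\Sigma^n_T\SH^\eff(k)$ and the results of Appendix~\ref{sec:Localization} on $\Q$-localization. The main obstacle will be ensuring the descent spectral sequence converges strongly; this is exactly what the two conditions defining $\SH(k)_\cfin(k)$ are tailored to provide.

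For (1), I will represent $\sE$ by a fibrant object and apply the Nisnevich descent spectral sequence to the presheaf of spectra $U \mapsto (\Omega_{\G_m}^q \sE)(U)$. Using that $\Omega_{\G_m}$ is an auto-equivalence with $\Pi_{a,0}(\Omega_{\G_m}^q \sE) = \Pi_{a,q}\sE$, the spectral sequence for $U \in \Sm/k$ takes the form
\[
E_2^{s,s-p} = H^s_{\Nis}(U, \Pi_{p-s, q}\sE) \Longrightarrow \Hom_{\SH(k)}(\Sigma^p_{S^1}\Sigma^q_{\G_m}\Sigma^\infty_T U_+, \sE).
\]
Condition (ii) in the definition of $\SH(k)_\cfin(k)$ gives $\Pi_{r,q}\sE = 0$ for $r \le c$, confining the nonzero $E_2$-terms to $s < p-c$; together with the bound $s \in [0, \dim U]$ coming from the Nisnevich cohomological dimension of a smooth $k$-scheme, this is a finite range, yielding strong convergence. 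Rationalizing, and using that $-\otimes \Q$ commutes with Nisnevich sheafification and cohomology via filtered colimits, every $E_2$-term becomes $H^s_{\Nis}(U, (\Pi_{p-s, q}\sE)_\Q)$, which vanishes by condition (i) since $q > d(\sE)$. The abutment is therefore zero, and standard compactness identifies it with $\Hom_{\SH(k)_\Q}((\Sigma^p_{S^1}\Sigma^q_{\G_m}\Sigma^\infty_T U_+)_\Q, \sE_\Q)$.

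For (2), observe that $f_n\sE$ lies in $\Sigma^n_T\SH^\eff(k)$, which is itself a compactly generated triangulated category whose compact generators are the objects $\Sigma^j_{S^1}\Sigma^m_{\G_m}\Sigma^\infty_T X_+$ with $j \in \Z$, $m \ge n$, $X \in \Sm/k$. By Appendix~\ref{sec:Localization}, $(\Sigma^n_T\SH^\eff(k))_\Q$ is likewise compactly generated, by the rationalizations of these objects. Hence to prove $(f_n\sE)_\Q \cong 0$ it suffices to check that every Hom from such a generator into $(f_n\sE)_\Q$ is zero. Since the source lies in $\Sigma^n_T\SH^\eff(k)$ (as $m \ge n$), lemma~\ref{lem:SliceCompat}, applied to the universal property of the counit $\rho_n : f_n\sE \to \sE$, identifies
\[
\Hom_{\SH(k)}(\Sigma^j_{S^1}\Sigma^m_{\G_m}\Sigma^\infty_T X_+, f_n\sE) \cong \Hom_{\SH(k)}(\Sigma^j_{S^1}\Sigma^m_{\G_m}\Sigma^\infty_T X_+, \sE),
\]
which, since $m \ge n > d(\sE)$, vanishes after rationalization by (1).
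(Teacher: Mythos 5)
Your proof is correct; the interesting divergence from the paper is in part (1). The paper rationalizes the Gersten--Quillen (coniveau) spectral sequence for $U$, whose $E_1$ page is $\bigoplus_{u\in U^{(a)}}\Pi_{-b,q+a}\sE(k(u))_\Q$ with $0\le a\le\dim U$. Since every twist appearing is $q+a\ge q>d(\sE)$, condition (i) of the definition of $\SH(k)_\cfin(k)$ already makes the rationalized $E_1$ page vanish identically; condition (ii) is never invoked, and strong convergence is for free because the coniveau filtration on $U$ is finite. You instead use the Nisnevich descent (Brown--Gersten) spectral sequence, whose $E_2$ terms are Nisnevich cohomology groups of the homotopy sheaves $\Pi_{r,q}\sE$, and you call on condition (ii) to guarantee strong convergence (it provides boundedness below of $\Omega^q_{\G_m}\sE$, so the spectral sequence converges without appealing separately to hypercompleteness of the Nisnevich topos). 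Both routes are legitimate; the paper's is slightly more economical in its hypotheses. One indexing slip: the terms contributing to $\Hom_{\SH(k)}(\Sigma^p_{S^1}\Sigma^q_{\G_m}\Sigma^\infty_TU_+,\sE)$ are $H^s_{\Nis}(U,\Pi_{p+s,q}\sE)$, not $H^s_{\Nis}(U,\Pi_{p-s,q}\sE)$ --- this changes the bound you extract from condition (ii) (it becomes $s> c-p$ rather than $s<p-c$), but it has no effect on the outcome because the rational vanishing depends only on the twist $q$, not on the first index. For (2) your argument is essentially the paper's: you test against the compact generators of $\Sigma^n_T\SH^\eff(k)_\Q$ and use the adjunction $\Hom_{\SH(k)}(A,f_n\sE)\cong\Hom_{\SH(k)}(A,\sE)$ for $A\in\Sigma^n_T\SH^\eff(k)$, which is exactly what the paper's appeal to the universal property of $\rho_n$ amounts to after applying example~\ref{ex:LocSlice}.
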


\begin{proof} Let $i_n: \Sigma^n_T\SH^\eff(k)\to \SH(k)$ be the inclusion, $r_n$ the right adjoint to $i_n$; recall that $f_n:=i_n\circ r_n$. By example~\ref{ex:LocSlice}, the $\Q$-localization $\Sigma^n_T\SH^\eff(k)_\Q$ of $\Sigma^n_T\SH^\eff(k)$ is the localizing subcategory of $\SH(k)_\Q$ generated by the objects $(\Sigma^q_{\G_m}\Sigma^\infty_TU_+)_\Q$, $q\ge n$, $U\in \Sm/k$, the inclusion $\Sigma^n\SH^\eff(k)_\Q\to \SH(k)_\Q$  is given by $i_{n\Q}$, the right adjoint to $i_{n\Q}$ is $r_{n\Q}$ and $f_{n\Q}(\sE_\Q)=(f_n\sE)_\Q$ for all $\sE\in\SH(k)$.

Using this, we see that (1) implies (2), as (1) implies that for $\sF\in\Sigma^n_T\SH^\eff(k)_\Q$, $n>d(\sE)$, 
\[
\Hom_{\SH(k)_\Q}(\sF,\sE_\Q)=0.
\]
Since  $(f_n\sE)_\Q\to \sE_\Q$  is universal for maps $\sF\to \sE_\Q$ with $\sF\in\Sigma^n_T\SH^\eff(k)_\Q$, it follows that $(f_n\sE)_\Q=0$ for $n>d(\sE)$.

For (1), by  lemma~\ref{lem:Localization}(2), we have
\[
\Hom_{\SH(k)_\Q}((\Sigma^p_{S^1}\Sigma^q_{\G_m}\Sigma^\infty_TU_+)_\Q,\sE_\Q)=
\Hom_{\SH(k)}(\Sigma^p_{S^1}\Sigma^q_{\G_m}\Sigma^\infty_TU_+,\sE)_\Q.
\]
Thus, tensoring the usual  Gersten-Quillen spectral sequence with $\Q$ gives the 
strongly convergent spectral sequence
\[
E_1^{a,b}=\oplus_{u\in U^{(a)}}\Pi_{-b,q+a}\sE(k(u)_\Q\Longrightarrow \Hom_{\SH(k)_\Q}((\Sigma^{-a-b}_{S^1}\Sigma^q_{\G_m}\Sigma^\infty_TU_+)_\Q,\sE_\Q),
\]
concentrated in the range $0\le a\le \dim U$. The assumption $q>d(\sE)$ implies that $E_1^{a,b}=0$ for all $a,b$, proving (1).
\end{proof}

\begin{lem}\label{lem:CohFinTate} Let $\sE$ be in $\SH(k)_\cfin(k)$. 
 Then  $f_n\sE$ is  in $\SH(k)_\cfin(k)$, $d(f_n\sE)\le d(\sE)$ and $c(f_n\sE)\ge c(\sE)$, for all $n\in \Z$.
\end{lem}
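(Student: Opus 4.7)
The plan is to verify the two conditions (i) and (ii) in the definition of $\SH(k)_\cfin(k)$ for $f_n\sE$ in turn, uniformly in $n\in\Z$. Each half reduces to one of the structural tools already proved: Lemma~\ref{lem:CohFinVan}(2) together with the right-orthogonality of the cofiber of $\rho_n:f_n\sE\to\sE$ for condition~(i), and the compatibilities \eqref{eqn:SliceIso2} and Lemma~\ref{lem:SliceIso} together with \cite[proposition 3.2]{LevineGW} for condition~(ii).

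For $d(f_n\sE)\le d(\sE)$: when $n>d(\sE)$, Lemma~\ref{lem:CohFinVan}(2) directly gives $(f_n\sE)_\Q\cong 0$, so all rational homotopy sheaves vanish. Otherwise, complete $\rho_n$ to a distinguished triangle $f_n\sE\to\sE\to C_n\sE$. The universal property defining $f_n\sE$ forces $\Hom_{\SH(k)}(A,C_n\sE)=0$ for every $A\in\Sigma^n_T\SH^\eff(k)$; taking $A=\Sigma^{r-q}_{S^1}\Sigma^q_T\Sigma^\infty_T U_+$ with $q\ge n$ and sheafifying in $U$ shows $\Pi_{r,q}C_n\sE=0$ for all $r$ whenever $q\ge n$. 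The long exact homotopy sheaf sequence of the triangle then gives $\Pi_{r,q}f_n\sE\iso\Pi_{r,q}\sE$ for $q\ge n$, and in particular for $q>d(\sE)\ge n$. Rationalizing and applying hypothesis~(i) on $\sE$ finishes this case.

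For $c(f_n\sE)\ge c(\sE)$: fix $r\le c:=c(\sE)$ and $q\in\Z$. Since $\Omega^q_{\G_m}$ is an auto-equivalence of $\SH(k)$, the $\SH(k)$-analogue of \eqref{eqn:SliceIso2} together with Lemma~\ref{lem:SliceIso} yields
\[
\Pi_{r,q}f_n\sE \;\cong\; \pi_r\,f_{n-q}\!\left(\Omega^\infty_T\Omega^q_{\G_m}\sE\right),
\]
where $f_{n-q}$ on the right is the slice truncation in $\SH_{S^1}(k)$. Set $E:=\Omega^\infty_T\Omega^q_{\G_m}\sE$; the adjunction identity $\Pi_{a,b}E=\Pi_{a,b+q}\sE$ for $b\ge 0$ shows that $E$ inherits from $\sE$ the topological $c$-connectivity $\Pi_{a,*}E=0$ for $a\le c$. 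If $n-q\le 0$ then $f_{n-q}=\id$ in $\SH_{S^1}(k)$ and $\pi_r E=0$ immediately. If $n-q>0$, shift $E$ by $[-c-1]$ so that the hypothesis of \cite[proposition 3.2]{LevineGW} is met, apply that proposition to the $S^1$-slice $f_{n-q}$, and shift back to conclude $\pi_r f_{n-q}E=0$.

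The main obstacle is the mismatch between the slice truncation $f_n$ on $\SH(k)$, which is nontrivial for every $n\in\Z$, and its namesake on $\SH_{S^1}(k)$, which is the identity for $n\le 0$; the bridge provided by Lemma~\ref{lem:SliceIso} (combined with a degree shift so that \cite[proposition 3.2]{LevineGW} can be invoked in its stated $(-1)$-connected form) is what handles this uniformly. Beyond these reductions no new input is needed.
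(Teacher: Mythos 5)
Your proof is correct and follows essentially the same route as the paper's. For the $d(f_n\sE)\le d(\sE)$ half, the paper simply asserts $\Pi_{r,m}f_n\sE\cong\Pi_{r,m}\sE$ for $m>d(\sE)\ge n$ without proof; your cofiber/orthogonality argument is exactly the (implicit) justification. For $c(f_n\sE)\ge c(\sE)$, the paper sets $E:=\Omega^\infty_T\Sigma^{-c-1}_{S^1}\Sigma^q_{\G_m}\sE$ all at once and unwinds bi-degrees, whereas you factor the same reduction into two cleaner steps (first $\Omega^q_{\G_m}$ to identify $\Pi_{r,q}f_n\sE\cong\pi_r f_{n-q}E$, then the shift $[-c-1]$ to meet the $(-1)$-connectivity hypothesis of \cite[proposition 3.2]{LevineGW}); the bookkeeping and the underlying inputs — \eqref{eqn:SliceIso2}, Lemma~\ref{lem:SliceIso}, and \cite[proposition 3.2]{LevineGW} — are identical.
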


\begin{proof} We have shown in \cite[proposition 3.2]{LevineGW} that, for $E\in \SH_{S^1}(k)$, if $E$ is topologically $-1$-connected, then $f_pE$ is also topologically $-1$-connected for all $p$. For $E:=\Omega^\infty_T\Sigma^{-c-1}_{S^1}\Sigma^{q}_{\G_m}\sE$, we have
\[
\Pi_{r-c-1, m+q}f_{n+q}E\cong \Pi_{r, m+q}f_{n+q}\Sigma^{q}_{\G_m}\sE
\cong \Pi_{r, m+q}\Sigma^{q}_{\G_m}f_n\sE\cong \Pi_{r, m} f_n\sE
\]
for $m\ge -q$.  Similarly, $\Pi_{r-c-1, m+q}E\cong  \Pi_{r, m} \sE$ for $m\ge-q$. Thus,  if $\Pi_{r,m}\sE=0$ for $r\le c$ and $m\ge -q$,  the same holds for $f_n\sE$.  As $q$ was arbitrary, we see that if $\sE$ is topologically $c$-connected, so is $f_n\sE$ for all $n$, that is, $c(f_n\sE)\ge c(\sE)$.

We have already seen in lemma~\ref{lem:CohFinVan} that $f_n\sE_\Q=0$ for $n> d(\sE)$, hence $f_n\sE$ is in $\SH(k)_\cfin(k)$ and $d(f_n\sE)=-\infty$ for $n>d(\sE)$. For $n\le d(\sE)$, take $m>d(\sE)\ge n$. Then $\Pi_{r,m}(f_n\sE)\cong \Pi_{r,m}(\sE)$, hence $\Pi_{r,m}(f_n\sE)_\Q=0$. Thus $f_n\sE$ is in $\SH(k)_\cfin(k)$  and $d(f_n\sE)\le d(\sE)$.
\end{proof}

In order to proceed further with our study of $\SH_\fin(k)$ and $\SH(k)_\cfin(k)$, we need to recall Morel's decomposition of $\SH(k)[\frac{1}{2}]$. We recall the ring homomorphism \eqref{eqn:HopkinsMorel}
\[
\theta_k:=\theta_0(k):K_0^{MW}(k)\to \End_{\SH(k)}(\mS_k).
\]
 In addition, Morel \cite[lemma 3.10]{MorelA1} has defined an isomorphism of rings $K_0^{MW}(k)\cong \GW(k)$,  with $1+\eta\cdot [u]$ corresponding to the one-dimensional form $ux^2$ if $\Char k\neq2$.

Morel \cite[section 6.1]{MorelLec}  has considered the action of $\Z/2$ on the sphere spectrum $\mS_k$ arising from the exchange of factors
\[
\tau:T\wedge T\to T\wedge T.
\]
We also write $\tau$ for the induced automorphism of the sphere spectrum $\mS_k$. Morel identifies the corresponding element of $\End(\mS_k)$ as
\begin{equation}\label{eqn:Tau}
\tau:=\theta_k(1+\eta\cdot [-1]).
\end{equation}

After inverting 2, the action of $\tau$  decomposes $\mS_k[\frac{1}{2}]$ into its $+1$ and $-1$ eigenspaces
\[
\mS_{k}[{\scriptstyle\frac{1}{2}}]=\mS^+_k[{\scriptstyle\frac{1}{2}}]\oplus \mS^-_k[{\scriptstyle\frac{1}{2}}];
\]
as $\mS_k[\frac{1}{2}]$ is the unit in the tensor category $ \SH(k)[\frac{1}{2}]$, this induces a  decomposition of $\SH(k)[\frac{1}{2}]$ as
\[
 \SH(k)[{\scriptstyle\frac{1}{2}}]=\SH(k)^+\times \SH(k)^-.
\]
This extends to a decomposition of $\SH(k)_\Q$ as $\SH(k)_{\Q}^+\times \SH(k)_{\Q}^-$. For an object $\sE$ of $\SH(k)$, we write the corresponding factors of $\sE_\Q$ as $\sE^+_\Q$, $\sE^-_\Q$. 

\begin{lem}\label{lem:torsion}  Suppose that either
\begin{enumerate}
\item $\Char k=0$ and  $cd_2(k)<\infty$.
\item $\Char k>0$.
\end{enumerate}
Then $2^{N+1}\cdot \eta=0$, where in case (1), $N=\cd_2(k)$, and in case (2), $N=0$ if $\Char k=2$ or if $\Char k=p$ is odd and $p\equiv 1\mod 4$, $N=1$ if $p\equiv 3\mod 4$. Moreover, letting  $I\subset\GW(k)$ be the augmentation ideal, we have $I^n=0$ for $n>\cd_2(k)$, assuming $k$ has characteristic $\neq2$.  
\end{lem}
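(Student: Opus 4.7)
The plan is to exploit the Morel relation $\eta h = 0$ in Milnor-Witt $K$-theory $K^{MW}_*(k)$, where $h = 2 + \eta[-1]$ is the hyperbolic element. Since $\eta$ is central and $\<u\> = 1 + \eta[u]$, this rewrites as $2\eta = -\eta^2[-1]$; iterating gives, for each $M \ge 0$, the identity
\[
2^M \eta = (-1)^M \eta^{M+1}[-1]^M = (-1)^M\, \eta\cdot(\eta[-1])^M = (-1)^M\, \eta\cdot(\<-1\>-1)^M
\]
in $K^{MW}_{-1}(k)$, where the last factor lies in $I(k) \subset \GW(k) = K^{MW}_0(k)$. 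So it suffices in each case to produce an integer $M$ with $(\<-1\>-1)^M = 0$, which is implied by $I^M(k) = 0$.

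I would first prove the second assertion. Assuming $\Char k \ne 2$, Voevodsky's theorem (the Milnor conjecture) gives $I^n(k)/I^{n+1}(k) \cong K^M_n(k)/2 \cong H^n_\et(k, \Z/2)$, which vanishes for $n > \cd_2(k)$. The chain $I^\bullet(k)$ is thus stationary in $W(k)$ past level $\cd_2(k)$; Arason-Pfister \cite{ArasonPfister} forces the stable value to be zero, and the injectivity of $I^n(k) \hookrightarrow W(k)$ for $n \ge 1$ recalled in the proof of proposition~\ref{IntroProp:converge} transports this to $\GW(k)$. Case (1) of the torsion statement follows at once with $M = N+1 = \cd_2(k)+1$.

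For case (2) the argument proceeds by reducing to the prime field $\F_p \subseteq k$. In characteristic $2$, the relation $[1] = [1 \cdot 1] = 2[1] + \eta[1]^2 = [1]\<1\>$ forces $[1] = 0$ in $K^{MW}_1$, so $[-1] = 0$ and $2\eta = 0$. If $\Char k = p \equiv 1 \pmod{4}$, then $-1$ is a square in $\F_p$, so $\<-1\> = 1$ in $\GW(\F_p)$ hence in $\GW(k)$, and again $2\eta = 0$. If $\Char k = p \equiv 3 \pmod{4}$, I would show $[-1]^2 = 0$ already in $K^{MW}_2(\F_p)$, transporting by functoriality of $K^{MW}_*$ to $K^{MW}_2(k)$: the cartesian square from the proof of proposition~\ref{IntroProp:converge} realizes $K^{MW}_2(\F_p)$ as the fibre product of $K^M_2(\F_p)$ and $I^2(\F_p)$ over $I^2(\F_p)/I^3(\F_p)$, and both factors vanish---the former by Bass-Tate, the latter because $\cd_2(\F_p) = 1$ (equivalently, because $-1$ is a sum of two squares in $\F_p$, making the Pfister form $\<\<-1,-1\>\>$ isotropic and hence hyperbolic). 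The identity at $M = 2$ then gives $4\eta = -\eta^3[-1]^2 = 0$.

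The main obstacle is really making sure the inductive identity for $2^M\eta$ is derived correctly: graded-commutativity in $K^{MW}_*$ is twisted by $-\<-1\>$ on degree $+1$ generators, so the rearrangement $\eta^{M+1}[-1]^M = \eta\cdot(\eta[-1])^M$ needs centrality of $\eta$, and the identification $\eta[-1] = \<-1\>-1$ must be carried out in $K^{MW}_0 = \GW$ where the ring is honestly commutative. Once this identity is in place, everything else is a combination of Milnor / Bloch-Kato, Arason-Pfister, $K^M_n(\F_q) = 0$ for $n \ge 2$, and the classical fact that $-1$ is a sum of two squares in any finite field of odd characteristic.
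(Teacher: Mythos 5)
Your argument is correct, and its overall skeleton (Milnor conjecture plus Arason--Pfister for the $I$-power vanishing, reduction to $\F_p$ with an explicit low-degree computation for positive characteristic) matches the paper. The genuine difference is in the packaging of the torsion claim. You derive once and for all the identity $2^M\eta = (-1)^M\,\eta\cdot(\langle -1\rangle - 1)^M$ by iterating $\eta h = 0$ and identifying $\eta[-1]$ with $\langle -1\rangle - 1 \in I(k)$, so that every case reduces to the vanishing of a suitable power of $I$ (or of $[-1]^M$ in $K^{MW}_M$). The paper does not state this uniform identity: in case (1) it instead uses the isomorphism $K^{MW}_{-1}(k)\cong W(k)$ together with the fact that $2W(k)\subset I(k)W(k)$ to conclude $2^{N+1}W(k)=0$, and in case (2) with $p\equiv 3\bmod 4$ it produces the relation $2\eta[-1]=0$ directly in $\GW(\F_p)$ from an explicit Pfister-form computation and then combines it by hand with $\eta h=0$. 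Your route is cleaner and more systematic, at the modest cost of invoking the Morel fibre-product description of $K^{MW}_2(\F_p)$ (or, equivalently, Bass--Tate plus $I^2(\F_p)=0$) where the paper just manipulates forms in $\GW(\F_p)$. Both proofs ultimately rest on the same external inputs; your parenthetical ``$\cd_2(\F_p)=1$, equivalently $\langle\langle -1,-1\rangle\rangle$ hyperbolic'' should really be unpacked slightly (passing from $\cd_2=1$ to $I^2=0$ uses the second part of the lemma, i.e.\ Arason--Pfister, whereas the Pfister-form observation is direct), but either way the step is sound.
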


\begin{proof} Suppose that $\Char k\neq2$.  Sending $(a_1,\ldots, a_n)\in (k^\times)^n$ to the $n$-fold Pfister form $\<\<a_1,\ldots, a_n\>\>$ descends to a well-defined surjective homomorphism $p_n:K^M_n(k)/2\to I^n/I^{n+1}$ \cite[\S 4.1]{Milnor}; in particular $2I^n\subset I^{n+1}$ for all $n\ge0$.    By the Milnor conjecture \cite[theorem 6.6 and theorem 7.4]{VoevMilnor} and \cite[theorem 4.1]{OVV},  $p_n$ is an isomorphism, and induces an isomorphism
\[
I^n/I^{n+1}\cong H^n_\et(k,\mu_2^{\otimes n}).
\]
Thus $I^n=I^{n+1}$ for $n\ge N+1$, where $N:=\cd_2 k$.   By the theorem of Arason-Pfister \cite[Korollar 1]{ArasonPfister}, $\cap_nI^n=\{0\}$, hence $I^{N+1}=0$, and thus $2^{N+1}$ kills the Witt group $W(k)$.

As a $K^{MW}_0(k)$-module, $K_{-1}^{MW}(k)$ is cyclic with generator $\eta$. However, 
\[
K_{-1}^{MW}(k)\cong W(k)
\]
by \cite[lemma 3.10]{MorelA1}, and thus $2^{N+1}\eta=0$. This handles the case (1).

In case $k$ has characteristic $p>0$, then as $\eta$ comes from base extension from the prime field 
$\F_p$, it suffices to show that $2^{N+1}\eta=0$ in $\SH(\F_p)$, with $N$ as in the statement of the lemma.

For $p$ odd, we have $\GW(\F_p)\cong K_0^{MW}(\F_p)$, with $1+\eta[-1]$ corresponding to the form $-x^2$; the hyperbolic form $x^2-y^2$ corresponds to $2+\eta[-1]$. If $p\equiv 1\mod 4$, then $-1$ is a square, hence $-x^2$ and $x^2$ are isometric forms and $1+\eta[-1]=1$ in $K_0^{MW}(\F_p)$. The relation $\eta(2+\eta[-1])=0$ in $K_0^{MW}(\F_p)$ thus simplifies to $2\eta=0$. 

If $p\equiv 3\mod 4$, then $-1$ is a sum of two squares, and hence the quadratic form $x^2+y^2+z^2$ is isotropic. Thus the Pfister form $x^2+y^2+z^2+w^2$ is also isotropic and hence hyperbolic, and hence the form 
 $-x^2-y^2-z^2-w^2$ is hyperbolic as well. Translating this back to $K_0^{MW}(\F_p)$ gives the relation $4(1+\eta[-1])=2(2+\eta[-1])$ or $2\eta[-1]=0$. Combining this with relation $\eta(2+\eta[-1])=0$ yields $4\eta=0$.

In case $k$ has characteristic 2, $-1=+1$. The relation $\eta(2+\eta[-1])=0$ simplifies to $2\eta=0$ (as $[1]=0$ in $K_1^{MW}(F)$ for all fields $F$ \cite[lemma 3.5]{MorelA1}).
\end{proof}

Lemma~\ref{lem:torsion} and \eqref{eqn:Tau}  imply:

\begin{lem} \label{lem:Plus} Suppose that $k$ has finite 2-cohomological dimension or that $\Char k>0$. Then  $\SH(k)[\frac{1}{2}]=\SH(k)^+$.
\end{lem}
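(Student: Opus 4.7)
The plan is to show that under either hypothesis the involution $\tau$ becomes the identity on $\mS_k[\tfrac{1}{2}]$, from which the decomposition $\SH(k)[\tfrac{1}{2}] = \SH(k)^+ \times \SH(k)^-$ collapses with $\SH(k)^- = 0$. The whole argument is really just a direct unwinding of \eqref{eqn:Tau} combined with the torsion bounds already proved in lemma~\ref{lem:torsion}.

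First I would rewrite \eqref{eqn:Tau} as
\[
1 - \tau \;=\; -\theta_k\bigl(\eta\cdot [-1]\bigr)
\]
in $\End_{\SH(k)}(\mS_k)$. The element $\eta\cdot[-1]$ lies in $K^{MW}_0(k)$ since $\eta\in K^{MW}_{-1}(k)$ and $[-1]\in K^{MW}_1(k)$, so the right-hand side is a well-defined endomorphism of $\mS_k$. Since $\theta_k$ is a ring homomorphism, any relation killing $\eta\cdot[-1]$ in $K^{MW}_0(k)$ will kill $1-\tau$.

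Next I would apply lemma~\ref{lem:torsion}: under the stated hypotheses (finite $2$-cohomological dimension when $\Char k = 0$, and arbitrary when $\Char k > 0$) there is an integer $N$ with $2^{N+1}\eta = 0$ in $K^{MW}_{-1}(k)$. Multiplying by $[-1]$ gives $2^{N+1}(\eta\cdot[-1]) = 0$ in $K^{MW}_0(k)$, and applying $\theta_k$ yields $2^{N+1}(1-\tau) = 0$ in $\End_{\SH(k)}(\mS_k)$. After inverting $2$ this forces $1-\tau = 0$, that is, $\tau = \id$ on $\mS_k[\tfrac{1}{2}]$.

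Consequently the idempotent $(1-\tau)/2 \in \End_{\SH(k)[1/2]}(\mS_k)$ defining the minus summand vanishes, so $\mS_k^-[\tfrac{1}{2}] = 0$. Since $\mS_k[\tfrac{1}{2}]$ is the unit of the tensor triangulated category $\SH(k)[\tfrac{1}{2}]$ and the factor decomposition is induced by the splitting of this unit, the vanishing of $\mS_k^-[\tfrac{1}{2}]$ forces $\SH(k)^- = 0$, giving $\SH(k)[\tfrac{1}{2}] = \SH(k)^+$. There is no serious obstacle here; the argument is essentially a two-line consequence of the identity $\tau = \theta_k(\langle-1\rangle)$ and the $2$-primary torsion bound on $\eta$, so the only thing to be careful about is keeping track of the conventions for the sign and the identification $K^{MW}_0(k)\cong \GW(k)$.
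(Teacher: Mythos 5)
Your argument is correct and is exactly the intended one: the paper's proof of this lemma consists of the single remark that lemma~\ref{lem:torsion} and \eqref{eqn:Tau} together imply the statement, and your proposal is precisely a careful unwinding of that implication (the $2$-power torsion bound on $\eta$ kills $\tau - 1 = \theta_k(\eta\cdot[-1])$ after inverting $2$, so the minus idempotent $(1-\tau)/2$ vanishes and $\SH(k)^- = 0$). No discrepancy; you have simply made the one-line proof explicit.
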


\begin{prop}\label{prop:Vanishing1} Take $X\in \Sm/k$. Then\\
1.   $\Pi_{r,n}(\Sigma^\infty_TX_+)=0$ for $r<0$ and   $n\in\Z$\\
2. Suppose $X$ is smooth and projective over $k$ of dimension $d$ and that   $k$ has finite $2$-cohomological dimension or $\Char k>0$. Then $\Pi_{r,n}(\Sigma^\infty_TX_+)_\Q=0$ for $n> d$, $r\in\Z$.\\
3. Suppose  $k$ has finite $2$-cohomological dimension or $\Char k>0$. Then each $\sE$ in $\SH(k)_\fin(k)$ is also in $\SH(k)_\cfin(k)$.
\end{prop}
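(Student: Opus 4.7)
I would address the three parts in order, each feeding into the next.

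For part (1), the assertion is a direct application of Morel's $\A^1$-connectivity theorem. Since $X_+$ is a pointed simplicially-constant $(-1)$-connected motivic space for any $X \in \Sm/k$, the $T$-suspension spectrum $\Sigma^\infty_T X_+$ is topologically $(-1)$-connected, that is $\Pi_{r,n}(\Sigma^\infty_T X_+) = 0$ for all $r<0$ and $n \in \Z$. (Equivalently, one can use $\Pi_{r,n}\sE = \pi_r \Omega^n_{\G_m}\sE$ on the $S^1$-level together with Remark~\ref{rem:ConnectTopConnect} and the fact that $\Sigma_{\G_m}$ and $\Omega_{\G_m}$ preserve topological $(-1)$-connectivity, reducing the $T$-stable claim to Morel's $S^1$-stable connectivity statement.)

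For part (2), the plan is to pass to $\DM(k)_\Q$ via Morel's decomposition and invoke Poincaré duality. Lemma~\ref{lem:Plus} gives $\SH(k)[\tfrac12] = \SH(k)^+$ under our hypothesis on $k$, so $\sE_\Q = \sE_\Q^+$ for every $\sE$. Under the Cisinski--D\'eglise equivalence $\SH(k)^+_\Q \simeq \DM(k)_\Q$, the functor $\Sigma^\infty_T$ sends $X_+$ to the rational motive $M(X)$, and for a finitely generated field extension $F/k$ the stalk $\Pi_{r,n}(\Sigma^\infty_T X_+)_\Q(F)$ is identified with
\[
\Hom_{\DM(k)_\Q}(\1(n)[r+n],\, M(X_F)).
\]
For $X$ smooth projective of dimension $d$, Poincar\'e duality supplies the isomorphism $M(X) \simeq M(X)^\vee(d)[2d]$, which rewrites this Hom-group as the motivic cohomology group $H^{2d-r-n,\,d-n}(X_F, \Q)$. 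By the standard vanishing of motivic cohomology of a smooth variety in strictly negative Tate weight, this group is zero as soon as $d - n < 0$, i.e.\ $n > d$.

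For part (3), both $\SH_\fin(k)$ and $\SH(k)_\cfin(k)$ are thick subcategories of $\SH(k)$, so it suffices to verify that each generator $\Sigma^m_T \Sigma^\infty_T X_+$ (with $X$ smooth projective and $m \in \Z$) lies in $\SH(k)_\cfin(k)$. Parts (1) and (2) give $c(\Sigma^\infty_T X_+) \geq -1$ and $d(\Sigma^\infty_T X_+) \leq \dim X$, and applying $\Sigma^m_T = \Sigma^m_{S^1}\Sigma^m_{\G_m}$ shifts both invariants by $m$ but keeps them finite, so the generator satisfies both conditions (i) and (ii) in the definition of $\SH(k)_\cfin(k)$.

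The main obstacle is clearly part (2): producing a motivic-cohomology reformulation whose weight can be forced negative. This rests on the full chain (a) Morel's $\tau$-decomposition (already packaged as Lemma~\ref{lem:Plus}), (b) the Cisinski--D\'eglise identification $\SH(k)^+_\Q \simeq \DM(k)_\Q$, (c) Poincar\'e duality for smooth projective motives, and (d) vanishing of motivic cohomology in negative weight. Once (2) is in place, parts (1) and (3) are comparatively formal.
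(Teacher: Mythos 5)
Your proposal is correct and follows essentially the same strategy as the paper for all three parts, with the same chain of ingredients (Morel's $S^1$-connectivity theorem, Lemma~\ref{lem:Plus}, the Cisinski--D\'eglise equivalence $\SH(k)^+_\Q \simeq \DM(k)_\Q$, Poincar\'e duality for smooth projective motives, and vanishing of motivic cohomology in negative weight).

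The one noteworthy variation is in part (2). You work at the level of stalks: you identify $\Pi_{r,n}(\Sigma^\infty_T X_+)_\Q(F)$ with $\Hom_{\DM(F)_\Q}(\1(n)[r+n], M(X_F))$ and then apply duality over $F$. This requires identifying the stalk of the homotopy sheaf at $F$ with the corresponding Hom-group in $\DM(F)_\Q$, which involves a continuity/base-change argument that you do not spell out (the stalk is a filtered colimit of Hom-groups $\Hom_{\DM(k)_\Q}(M(U)(n)[r+n], M(X)_\Q)$ over smooth models $U$ of $F$; identifying the colimit with a Hom-group over $F$ is a theorem of its own). The paper sidesteps this by showing the presheaf $U \mapsto [\Sigma^r_{S^1}\Sigma^n_{\G_m}\Sigma^\infty_T U_+, \Sigma^\infty_T X_+]_\Q$ vanishes term by term, using duality over $k$ to rewrite each term as $H^{2d-r-n}(U\times X, \Q(d-n))$, which is zero in weight $d-n < 0$. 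Both routes reach the same conclusion; the presheaf version is slightly cleaner because it avoids the auxiliary continuity step entirely. Similarly, your opening sentence in part (1) asserts the $T$-stable connectivity as though it followed formally from $(-1)$-connectivity of $X_+$, which is exactly what needs proving; your parenthetical repairs this, but the precise reduction from the $T$-stable claim to Morel's $S^1$-stable theorem is most cleanly done via the colimit formula $\Pi_{r,n}(\Sigma^\infty_T X_+) = \colim_b \Pi_{r,n+b}\Sigma^\infty_{S^1}\Sigma^b_{\G_m}X_+$, which the paper cites explicitly.
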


\begin{proof} Noting that  $\SH(k)_\cfin(k)$ is a thick subcategory of $\SH(k)$, and that 
\[
\Pi_{r,n}(\Sigma^m_T\Sigma^\infty_TX_+)=\Pi_{r-m,n-m}(\Sigma^\infty_TX_+),
\] 
we see that (3) follows from (1) and (2).

We first prove (1). We have the $S^1$-spectrum $\Sigma^\infty_{S^1}X_+\in \SH_{S^1}(k)$. By \cite[theorem 10]{Jardine2} we have
\[
\Pi_{r,n}(\Sigma^\infty_TX_+)=\colim_b\Pi_{r,n+b}\Sigma^\infty_{S^1}\Sigma^b_{\G_m}X_+
\]
so it suffices to see that $\Sigma^\infty_{S^1}\Sigma^b_{\G_m}X_+$ is topologically -1 connected for all $b\ge0$. By remark~\ref{rem:ConnectTopConnect}, we need only see that  $\Sigma^\infty_{S^1}\Sigma^b_{\G_m}X_+$ is -1 connected. This follows from  Morel's $S^1$-stable $\A^1$-connectedness theorem \cite[theorem 4.2.10]{MorelLec}.

For (2), lemma~\ref{lem:Plus}  and our assumption on $k$ imply that  $\SH(k)_\Q=\SH(k)_\Q^+$. By a result of  Morel (proved in detail by Cisinski-D\'eglise \cite[theorem 15.2.13]{CisDeg}), there is an equivalence of triangulated categories $\SH(k)_\Q^+\cong \DM(k)_\Q$, and hence
\begin{align*}
\Hom_{\SH(k)}(\Sigma^a_{S^1}\Sigma^b_{\G_m}\Sigma^\infty_TU_+,\Sigma^\infty_TX_+)_\Q&\cong
\Hom_{\SH(k)_\Q^+}((\Sigma^a_{S^1}\Sigma^b_{\G_m}\Sigma^\infty_TU_+)_\Q^+,(\Sigma^\infty_TX_+)_\Q^+) \\ 
&\cong\Hom_{\DM(k)_\Q}(M(U)(b)_\Q[a+b], M(X)_\Q),
\end{align*}
where $M:\Sm/k\to \DM(k)$ is the canonical functor.  

Since $M(U)(b)$ is compact in $\DM(k)$, we have
\[
\Hom_{\DM(k)_\Q}(M(U)(b)_\Q[a+b], M(X)_\Q)\cong
\Hom_{\DM(k)}(M(U)(b)[a+b], M(X))_\Q.
\]
As $X$ is smooth and projective, we may use duality (via, e.g., duality in Chow motives over $k$ and \cite[V, prop. 2.1.4]{VSF})
\begin{align*}
\Hom_{\DM(k)}(M(U)(b)[c], M(X))&\cong \Hom_{\DM(k)}(M(U\times X), \Z(d-b)[2d-c])\\
&=H^{2d-c}(U\times X, \Z(d-b)).
\end{align*}
But for all $Y\in \Sm/k$, $H^p(Y, \Z(q))=0$ for $q<0$ \cite[corollary 2]{VoevChow}, $p\in\Z$. Thus the presheaf
\[
U\mapsto \Hom_{\SH(k)}(\Sigma^a_{S^1}\Sigma^b_{\G_m}\Sigma^\infty_TU_+,\Sigma^\infty_TX_+)_\Q
\]
is zero for $b>d$, and hence the associated sheaf $\Pi_{a,b}(\Sigma^\infty_TX_+)_\Q$ is zero as well.
\end{proof}

\begin{rem} Suppose that $k$ has finite 2-cohomological dimension or that $\Char k>0$, so $\SH_\fin(k)\subset \SH_\cfin(k)$. We do not know if $\SH_\cfin(k)\subset \SH_\fin(k)$.
\end{rem}

\begin{rem} The statement of  the result of Cisinski-D\'eglise \cite[theorem 16.2.13]{CisDeg} cited above is not  the same as given here;  Cisinski-D\'eglise show that the $\A^1$-derived category over a base-scheme $S$, with $\Q$-coefficients, $D_{\A^1,\Q}(S)$, has plus part  $D_{\A^1,\Q}(S)^+$  equivalent to the category of ``Beilinson motives'' $\DM_\text{\textcyr{\tiny{B}}}(S)$. For $S$ geometrically unibranch, they show \cite[theorem 16.1.4]{CisDeg} that $\DM_\text{\textcyr{\tiny{B}}}(S)\cong \DM(S)_\Q$. 

Furthermore, the well-known adjunction between simplicial sets and chain complexes  extends to give an equivalence of $\SH(S)_\Q$ with $D_{\A^1,\Q}(S)$ (see e.g. \cite[\S 5.3.35]{CisDeg}). Putting these equivalences all together gives us the equivalence $\SH(k)_\Q^+\cong \DM(k)_\Q$ we use in the proof of proposition~\ref{prop:Vanishing1}.

Alternatively, one also can repeat the argument used by Cisinski-D\'eglise, replacing $D_{\A^1,\Q}(S)$ with $\SH(k)_\Q$, and $\DM_\text{\textcyr{\tiny{B}}}(S)$ with $\Ho\,M\Z\text{-}\Mod_\Q$, where $M\Z$ is the commutative monoid in symmetric motivic spectra over $k$ defined in \cite{RO}, and representing motivic cohomology in $\SH(k)$. This shows that the forgetful functor $\Ho\,M\Z\text{-}\Mod_\Q\to \SH(k)_\Q$ induces an equivalence  $\Ho\,M\Z\text{-}\Mod_\Q\to \SH(k)_\Q^+$. One can then use the theorem of R\"ondigs-{\O}stv{\ae}r \cite[theorem 1.1, discussion preceding theorem 1.2]{RO}, which gives an equivalence of $\Ho\,M\Z\text{-}\Mod_\Q$ with $\DM(k)_\Q$ for $k$ a perfect field. 
\end{rem}

\section{The proof of the convergence theorem}\label{sec:Finale}
 
The following  result combines with lemma~\ref{lem:Induction} to form the heart of the proof of our main theorem.

\begin{lem}\label{lem:Vanishing2} Let $k$ be a perfect field. Suppose that $k$ has finite cohomological dimension $D_k$. 
Let $F$ be a perfect field extension of $k$ with  $\trdim_kF\le d$. Let $E=\Omega^\infty_T\sE$ for some $\sE\in \SH_\cfin(k)$ and suppose that $\Pi_{a,*}E=0$ for $a<0$. Consider the   map
\[
\rho_n(f_ME):\pi_rf_n(f_ME)(F)\to \pi_rf_ME(F).
\]
Then, for all $r, M\ge0$,  $n>\max(D_k+d, d(\sE))$, we have
 \[
\rho_n(f_ME)(\Fil^{simp}_n(f_ME)\pi_rf_n(f_ME)(F))=0.
\]
\end{lem}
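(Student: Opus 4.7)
The plan is to combine Theorem~\ref{thm:GWSliceRevisited}, applied with $f_M\sE$ in place of $\sE$, with a torsion/divisibility argument to see that the resulting Milnor--Witt filtration vanishes. First, by Lemma~\ref{lem:SliceIso} one has $\Omega^\infty_Tf_M\sE\cong f_M\Omega^\infty_T\sE=f_ME$, and the hypothesis $\Pi_{a,*}E=0$ for $a<0$ propagates to $\Pi_{a,*}(f_ME)=0$ for $a<0$ via \cite[Prop.~3.2]{LevineGW} (as already used in the proof of Lemma~\ref{lem:Induction}). Theorem~\ref{thm:GWSliceRevisited} therefore identifies
\[
\rho_n(f_ME)\bigl(\Fil^{simp}_n(f_ME)\pi_rf_n(f_ME)(F)\bigr)=F^n_{MW^{Tr}}\pi_r(f_ME)(F).
\]
Since $F^n_{MW^{Tr}}$ is generated by transfers $\Tr^*_{F(w)/F}(x)$ with $x\in F^n_{MW}\pi_r(f_ME)(F(w))$ and $w$ a closed point of $\A^n_F$ separable over $F$, it will suffice to prove $F^n_{MW}\pi_r(f_ME)(F(w))=0$ for every such $w$.

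Next I would unwind the definition: $F^n_{MW}\pi_r(f_ME)(F(w))$ is the image of the Hopkins--Morel pairing
\[
\Pi_{r,n}(f_ME)(F(w))\otimes K^{MW}_n(F(w))\longrightarrow\pi_r(f_ME)(F(w)),
\]
and the aim is to see that the left factor is pointwise torsion while the right factor is divisible. Identifying $\Pi_{r,n}(f_ME)$ with $\Pi_{r,n}(f_M\sE)$ by $T$-loops adjunction, Lemma~\ref{lem:CohFinTate} gives $d(f_M\sE)\le d(\sE)<n$, so $\Pi_{r,n}(f_M\sE)_\Q=0$ as a Nisnevich sheaf, and the stalk at the pro-point $\Spec F(w)$ is consequently torsion. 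For the second factor, since $F$ is perfect and $F(w)/F$ is finite separable, $F(w)$ is itself perfect (every algebraic extension of a perfect field is perfect) and satisfies $\cd(F(w))\le\cd(k)+\trdim_kF(w)\le D_k+d<n$.

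With these cohomological bounds in hand, the Milnor conjecture yields $I^n/I^{n+1}(F(w))\cong H^n_\et(F(w),\mu_2^{\otimes n})=0$, so $I^n(F(w))=I^{n+1}(F(w))=\cdots$ and Arason--Pfister forces $I^n(F(w))=0$; Morel's cartesian square then identifies $K^{MW}_n(F(w))$ with $K^M_n(F(w))$. The Bloch--Kato (norm residue) isomorphism shows $K^M_n(F(w))/\ell\cong H^n_\et(F(w),\mu_\ell^{\otimes n})=0$ for every prime $\ell\ne\Char k$, while perfectness of $F(w)$ makes every element of $F(w)^\times$ an $\ell$-th power when $\ell=\Char k$, so $K^M_n(F(w))/\ell=0$ there as well. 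Thus $K^{MW}_n(F(w))$ is divisible, and for any $\alpha\otimes\beta$ contributing to the image, choosing $N$ with $N\alpha=0$ and writing $\beta=N\gamma$ yields $\alpha\cdot\beta=(N\alpha)\cdot\gamma=0$ by bi-additivity of the pairing, completing the proof. The principal obstacle is the divisibility of $K^{MW}_n(F(w))$, which rests squarely on the Bloch--Kato conjecture, in line with the remark following Theorem~\ref{IntroThm:Main}.
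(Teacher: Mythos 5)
Your proof tracks the paper's almost exactly: apply Theorem~\ref{thm:GWSliceRevisited} with $f_M\sE$ in place of $\sE$ to reduce to the vanishing of $F^n_{MW^{Tr}}\pi_r(f_ME)(F)$, observe that $\Pi_{r,n}(f_ME)$ is torsion (via $\Omega^\infty_T f_M\sE\cong f_ME$ from Lemma~\ref{lem:SliceIso} and $d(f_M\sE)\le d(\sE)<n$ from Lemma~\ref{lem:CohFinTate}), and show that $K^{MW}_n$ of each finite separable extension $F(w)$ of $F$ is divisible because $\cd(F(w))\le D_k+d<n$, which kills the Milnor--Witt pairing termwise.

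The one genuine gap is the characteristic-two case. The chain you invoke to identify $K^{MW}_n(F(w))$ with $K^M_n(F(w))$ — the isomorphism $I^n/I^{n+1}\cong H^n_\et(F(w),\mu_2^{\otimes n})$ coming from the Milnor conjecture, the Arason--Pfister theorem $\cap_m I^m=0$, and Morel's cartesian square from \cite{MorelWitt} — is a package of quadratic-form-theoretic facts whose statements (and, in the form cited, proofs) are restricted to $\Char k\neq 2$. The paper splits on the characteristic for exactly this reason: when $\Char k=2$ it replaces this entire block by the elementary observation that, since $F(w)$ is perfect, every unit is a square, so by \cite[proposition 3.13]{MorelA1} the quotient map $K^{MW}_n(F(w))\to K^M_n(F(w))$ is already an isomorphism for $n\ge0$. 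You should add that case distinction; the rest of your argument (the Bloch--Kato vanishing of $K^M_n(F(w))/\ell$ for $\ell\neq\Char k$, and $p$-divisibility of $K^M_n(F(w))$ from perfectness when $p=\Char k>0$) then applies unchanged.
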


\begin{proof}  Let $A= \max(D_k+d, d(\sE))\ge 0$. By theorem~\ref{thm:GWSliceRevisited},  and the definition of the filtration $F^n_{MW^{Tr}}\pi_rf_ME(F)$, it suffices to show that, for every finite extension $F'$ of $F$, the product
\[
\cup: \Pi_{r,n}f_ME(F')\otimes K^{MW}_{n}(F')\to  \Pi_{r,0}f_ME(F')
\]
is zero if $n>A$.

We note that $f_ME\cong\Omega^\infty_Tf_M\sE$ for $M\ge0$ (lemma~\ref{lem:SliceIso}). Thus, by  lemma~\ref{lem:CohFinTate}, $(\Pi_{r,n}f_ME)_\Q=0$ for $n>\max(d(\sE),-1)$ and  the image of $\cup$ is the same as the subgroup generated by the images of the maps
\[
\cup_N:  \Pi_{r,n}f_ME(F')_{N-\text{tor}}\otimes K^{MW}_{n}(F')/N\to  \pi_rf_M E(F');\quad N\in \N.
\]
 
First suppose $\Char k\neq2$. By Morel's theorem \cite[theorem 5.3]{MorelWitt}, we have a cartesian diagram
\[
\xymatrix{
 K^{MW}_{n}(F')\ar[r]\ar[d]&K^M_n(F')\ar[d]\\
 I(F')^n\ar[r]&I(F')^n/I(F')^{n+1}
 }
 \]
But  as  $\trdim_kF'\le d$, we have  $cd_2(F)\le cd(F)\le D_k+d$ \cite[II, \S4.2, proposition 11]{Serre}. Thus, for $n>D_k+d$, lemma~\ref{lem:torsion}  tells us that $K^{MW}_{n}(F')=K^{M}_{n}(F')$. Furthermore, for $N$ prime to the characteristic, the Bloch-Kato conjecture\footnote{Proved by Rost and Voevodsky  \cite{VBK}.  For a presentation of some of Rost's results that go into the proof of Bloch-Kato, see, e.g., \cite{HW, SuslinJouk}.} gives the isomorphism
 \[
 K^{M}_{n}(F')/N\cong H^n_\et(F', \mu^{\otimes n}_N),
 \]
 and hence $K^{M}_{n}(F')/N=0$ for $n>D_k+d$. For $N=p=\Char(k)$, it follows from \cite[theorem 2.1]{BlochKato} that 
$K^{M}_{n}(F')/p=0$, 
since $F'$ is perfect. Thus, for $n>D_k+d$, $K^{MW}_{n}(F')/N=0$, and hence the image of $\cup$ is zero if $n>A$.

If $\Char k=2$, then as $F'$ is perfect,   each $u\in F'$ is a square. Thus, by \cite[proposition 3.13]{MorelA1},   the quotient map $K^{MW}_n(F')\to K^M_n(F')$
is  an isomorphism for $n\ge0$. The remainder of the discussion is the same.
\end{proof}

Relying on lemma~\ref{lem:Vanishing2},  here is the main step in the proof of theorem~\ref{IntroThm:Main}:

\begin{prop}\label{prop:MainInd}  Let $k$ be a perfect field of  finite cohomological dimension and let $p$ be the exponential characteristic of $k$. Then there is a function $N_k:\Z^3\to\Z$, with $N_k(e,r,d)\ge0$ for all $e,d,r$,  such that,  given an integer $e$,  and an $\sE$ in $\SH_\cfin(k)$ with $c(\sE)\ge -1$ and  $d(\sE)\le e$, we have
\[
\Fil_\Tate^{N}\Pi_{r,q}f_M\Omega^\infty_T\sE(F)[1/p]=0
\]
for all $r, M\in \Z$, all $N\ge N_k(e-q,r,d)+q$,  all $q\ge0$, and all field extension $F$ of $k$ with $\trdim_kF\le d$.
\end{prop}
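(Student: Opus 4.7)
\emph{Plan.} The strategy is to reduce first to the case $q=0$ and then induct on $r$, combining Lemma~\ref{lem:Induction} (which collapses $\pi_r f_n f_m E'$ to its simplicial-filtration piece $\Fil^{simp}_n$) with Lemma~\ref{lem:Vanishing2} (which kills that piece under $\rho_n$) to recursively define $N_k$. A key observation is that, by Lemma~\ref{lem:SliceCompat}, $\Fil_\Tate^{N'}\pi_r f_M E'(F)$ equals $\pi_r f_M E'(F)$ when $M\ge N'$, and equals the image of $\pi_r f_{N'} E'(F)\to\pi_r f_M E'(F)$ when $M\le N'$; so the proposition reduces to showing $\pi_r f_{M'}E'(F)[1/p]=0$ uniformly for all $M'\ge N_k$.

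\emph{Reduction to $q=0$.} Setting $\sE':=\Omega^q_{\G_m}\sE$, the identity $\Pi_{r,n}\sE'=\Pi_{r,n+q}\sE$ shows $\sE'\in\SH_\cfin(k)$ with $c(\sE')\ge-1$ and $d(\sE')\le e-q$. Combining Lemma~\ref{lem:DegreeShift}, the commutation relation \eqref{eqn:SliceIso2}, and Lemma~\ref{lem:SliceIso} yields a canonical isomorphism
\[
\Fil_\Tate^N\Pi_{r,q}f_M\Omega^\infty_T\sE(F)\cong \Fil_\Tate^{N-q}\pi_r f_{M-q}\Omega^\infty_T\sE'(F),
\]
so it suffices to treat $q=0$, with $e':=e-q$ an arbitrary integer.

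\emph{Induction on $r$.} For $r<0$, set $N_k(e',r,d):=0$: Lemma~\ref{lem:CohFinTate} gives $c(f_M\sE')\ge-1$, so $\pi_r f_M\Omega^\infty_T\sE'(F)=0$. For $r\ge 0$, assuming $N_k(e'',j,d'')$ is defined for all $j<r$ and all $(e'',d'')$, write $E':=\Omega^\infty_T\sE'$ and put
\[
N_j(d'',q;E'):=N_k(e'-q,j,d'')+q,\qquad 0\le j<r;
\]
the inductive hypothesis, transported via Lemma~\ref{lem:DegreeShift}(2), then furnishes exactly the vanishing required by the hypothesis of Lemma~\ref{lem:Induction}. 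Choose $n_0:=\max(D_k+d,e')+1$ (so Lemma~\ref{lem:Vanishing2} applies at $n_0$), and define
\[
N_k(e',r,d):=\max\Bigl(n_0,\ \max_{0\le j<r}\bigl(N_k(e'-n_0,j,r-j+d)+n_0\bigr)\Bigr),
\]
with the inner max understood to be $0$ when $r=0$.

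\emph{Key step and main obstacle.} The claim is that $\pi_r f_{M'}E'(F)[1/p]=0$ for every $M'\ge N_k(e',r,d)$. Applying Lemma~\ref{lem:Induction} with $m=M'$ and $n=n_0$ (its bound on $m$ is exactly the inner max above, hence $\le M'$) gives
\[
\pi_r f_{n_0}(f_{M'}E')(F)[1/p]=\Fil^{simp}_{n_0}(f_{M'}E')\,\pi_r f_{n_0}(f_{M'}E')(F)[1/p].
\]
Lemma~\ref{lem:Vanishing2}, with $n=n_0>\max(D_k+d,d(\sE'))$, forces $\rho_{n_0}(f_{M'}E')$ to annihilate this group after inverting $p$; since $M'\ge n_0$, Lemma~\ref{lem:SliceCompat} makes $\rho_{n_0}(f_{M'}E')$ itself an isomorphism, and so $\pi_r f_{M'}E'(F)[1/p]=0$. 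Combined with the observation in the plan paragraph, this completes the induction. The delicate point is decoupling the ``large $m$'' threshold of Lemma~\ref{lem:Induction} from the ``large $n$'' threshold of Lemma~\ref{lem:Vanishing2}: pinning $n_0$ just above $\max(D_k+d,e')$ is precisely what lets a single bound $N_k(e',r,d)$ control $\pi_r f_{M'}E'$ uniformly in $M'$. A subsidiary technical point, handled via the norm-map formalism of the first appendix, is the passage from an arbitrary field extension $F$ to its perfection (required by Lemma~\ref{lem:Vanishing2}); this is vacuous in characteristic zero and absorbed into $[1/p]$ in positive characteristic.
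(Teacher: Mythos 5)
Your proof is correct and follows essentially the same inductive strategy as the paper's: reduce to $q=0$, induct on $r$, feed the inductive hypothesis through Lemma~\ref{lem:Induction}, kill the bottom simplicial layer with Lemma~\ref{lem:Vanishing2} at $n_0=\max(D_k+d,e')+1$, and pass to the perfect closure via Proposition~\ref{prop:InvertP}; your recursive formula for $N_k$ agrees with the paper's (noting $N_k(\cdot,-1,\cdot)=0$). The only organizational difference is that you establish the stronger fact $\pi_rf_{M'}E'(F)[1/p]=0$ for all $M'\ge N_k$ directly (subsuming both the $N$- and $M$-indices at once), whereas the paper first proves $F^N_\Tate\pi_rE(F)[1/p]=0$ and then handles general $M$ by substituting $f_M\sE$ for $\sE$; also, you correctly use $\sE'=\Omega^q_{\G_m}\sE$ where the paper writes $\Omega^q_T\sE$ (a typo there, as the paper's subsequent identity $\pi_rE'=\Pi_{r,q}E$ only holds for the $\G_m$-loop).
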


\begin{proof}  For a field $F$, let $F^{per}$ be the perfect closure of $F$. By proposition~\ref{prop:InvertP}, $\Fil_\Tate^N\Pi_{a,b}\sG(F)[1/p]\cong\Fil_\Tate^N\Pi_{a,b}\sG(F^{per})[1/p]$ for all $\sG\in \SH(k)$, so we may replace $F$ with $F^{per}$ whenever necessary.  

   From our hypothesis   $c(\sE)\ge-1$ and lemma~\ref{lem:CohFinTate} follows $c(f_M\sE)\ge-1$, so $\Pi_{r,q}\Omega^\infty_Tf_M\sE=\Pi_{r,q}f_M\Omega^\infty_T\sE=0$ for $r<0$, $q\ge0$.  Thus, taking $N_k(e,r,d)=0$ for $r<0$, and all $d$ and $e$ settles the cases $r<0$, so we may proceed by induction on $r$, defining $N_k(e,r,d)$ recursively.  We omit the $[1/p]$ in the notation, using the convention that we invert  $p$ throughout the proof of this proposition.

Assume $r\ge0$ and that we have  defined  $N_k(e, j,d)$ for $j\le r-1$ (and all $e, d$) so that the proposition holds for $\Fil_\Tate^{N}\Pi_{j,q}f_M\Omega^\infty_T\sE(F)$, $j<r$; we assume in addition that $N_k(e, j,d)=0$ for $j<0$. Define $N_j(d,q; E):=N_k(e-q,j,d)+q$ for $j<r$. Then \eqref{eqn:IndAssump} is satisfied for $q, M\ge 0$, $m\ge N_j(d,q; E)$, $j=0,\ldots, r-1$.  By  lemma~\ref{lem:Induction}, it follows that for all $n\ge0$, all fields $F$ with $\text{tr.\,dim}_kF\le d$ and all 
$m\ge \max_{j=0}^{r-1}N_j(r-j+d, n; E)$ (or $m\ge0$ in case $r=0$), we have
\[
\pi_r(f_nf_mE)(F) =\Fil^{simp}_n(f_mE)\pi_r(f_nf_mE)(F).
\]
As the filtration $\Fil^{simp}_*(G)\pi_r(f_nG)(F)$ is functorial in $G$, this implies that 
\[
f_n(\rho_m(E))(\pi_r(f_nf_mE)(F))\subset \Fil^{simp}_n(E)\pi_r(f_nE)(F)
\]
for $n, m, F$ as above. 

 If  we  take $n\ge \max(D_k+d, d(\sE))+1$, then by lemma~\ref{lem:Vanishing2}  
\[
\im(\Fil^{simp}_n(E)\pi_r(f_nE)(F^{per})\xrightarrow{\rho_n(E)} \pi_r(E)(F^{per}))=0. 
\]
For $m\ge n$, we have $f_mf_n=f_m$ and   $\rho_n(E)\circ f_n(\rho_m(E))=\rho_m(E)$ (lemma~\ref{lem:SliceCompat}). Thus, for 
\[
n= \max(D_k+d, e)+1,\ m\ge \max_{j=-1}^{r-1}N_j(r-j+d, n; E),
\]
we have
\[
0=\rho_n(E)\circ f_n(\rho_m(E))(\pi_r(f_mf_nE)(F^{per}))=\rho_m(\pi_r(f_mE)(F^{per}))\subset  \pi_r E(F^{per}).
\]
Let $n= \max(D_k+d, e)+1$ and define $N_k(e,r,d):=n+ \max_{j=-1}^{r-1}N_k(e-n,j, r-j+d)$. Using the isomorphism $F^N_\Tate \pi_rE(F)\cong F^N_\Tate \pi_rE(F^{per})$, we thus have 
\[
F^N_\Tate \pi_rE(F)=0
\]
for $N\ge N_k(e,r,d)$  and $F$ a field extension of $k$ with $\trdim_kF\le d$.

Fix $q\ge0$ and let $\sE'=\Omega^q_T\sE$, $E'=\Omega^\infty_T\sE'$. Then $\Pi_{a,b}\sE'\cong \Pi_{a,b+q}\sE$ for all $b\in\Z$, hence 
$c(\sE')\ge-1$. Also, $d(\sE')=d(\sE)-q\le e-q$, and we may apply our result with $E'$ replacing $E$, and $e-q$ replacing $e$. Since
 $\pi_rE'=\Pi_{r,q}E$ and $F^{N-q}_\Tate \pi_rE'=F^N_\Tate\Pi_{r,q}E$ (lemma~\ref{lem:DegreeShift}), we thus have 
\[
F^N_\Tate \Pi_{r,q}E(F)=0
\]
for $N\ge N_k(e-q,r,d)+q$, $q\ge0$, and $F$ as above.

Now take an integer $M$. Then    $c(f_M\sE)\ge-1$   and $d(f_M\sE)\le d(\sE)$ (by lemma~\ref{lem:CohFinTate}), so the same result holds for $E_M:=\Omega_T^\infty f_M\sE$. By lemma~\ref{lem:SliceIso}, $E_M\cong f_ME$ , so
\[
F^N_\Tate \Pi_{r,q}f_ME(F)[1/p]=0
\]
for $q\ge0$, $N\ge N_k(e-q,r,d)+q$, and all field extensions $F$ of $k$ with $\trdim_kF\le d$. As $M$ was arbitrary, the induction thus goes through, completing the proof.
\end{proof}

Now for the proof of the main theorem. For $x\in X\in \Sm/k$, and $\sF$ a sheaf on $\Sm/k_\Nis$, we let $\sF_x$ denote the Nisnevich stalk of $\sF$ at $x$.

 \begin{thm}\label{thm:Main} Let $k$ be a perfect  field of finite cohomo\-logical dimension. Let $p$ be the exponential characteristic of $k$, let $x\in X\in \Sm/k$ and let $d=\dim_kX$. Let $\sE$ be in $\SH_\cfin(k)$.  Then for all integers $r, q$, and $M$,  we have
\[
(\Fil_\Tate^n\Pi_{r,q}f_M\sE)_x[1/p]=0
\]
for $n\ge N_k(d(\sE)-q,r-c(\sE)-1,d)+q$, where $N_k$ is the integer-valued function given by proposition~\ref{prop:MainInd}.
 \end{thm}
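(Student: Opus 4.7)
The plan is to combine proposition~\ref{prop:MainInd} with two auxiliary reductions---normalizing the connectivity and the $\G_m$-degree---and then to pass from field values to Nisnevich stalks via Morel's Gersten injectivity for strictly $\A^1$-invariant sheaves.

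First I would reduce to the case $c(\sE)\ge -1$ and $q=0$. Set $c:=c(\sE)$. The triangulated auto-equivalence $\Sigma_{S^1}^{-c-1}$ commutes with $f_M$, so setting $\sE_1:=\Sigma_{S^1}^{-c-1}\sE$ gives an object of $\SH_\cfin(k)$ with $c(\sE_1)\ge -1$, $d(\sE_1)=d(\sE)$, and a natural identification $\Fil_\Tate^n\Pi_{r,q}f_M\sE = \Fil_\Tate^n\Pi_{r-c-1,q}f_M\sE_1$ of subsheaves. Then I would apply lemma~\ref{lem:DegreeShift}(1) with exponent $q$, using the extension of \eqref{eqn:SliceIso2} to all $q\in\Z$ in $\SH(k)$ (namely $\Omega^q_{\G_m}f_M=f_{M-q}\Omega^q_{\G_m}$): setting $\sE_2:=\Omega^q_{\G_m}\sE_1$, remark~\ref{rem:ConnectTopConnect} gives $c(\sE_2)\ge -1$, and a direct computation shows $d(\sE_2)=d(\sE)-q$. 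This produces
\[
\Fil_\Tate^n\Pi_{r,q}f_M\sE \;\cong\; \Fil_\Tate^{n-q}\Pi_{r-c-1,0}f_{M-q}\sE_2.
\]

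Next I would connect this to the $S^1$-setting and invoke proposition~\ref{prop:MainInd}. Since the $\G_m$-index is now $0\ge0$, lemma~\ref{lem:DegreeShift}(2) applied to the $T$-spectrum $f_{M-q}\sE_2$, combined with lemma~\ref{lem:SliceIso} (which gives $\Omega^\infty_Tf_{M-q}\sE_2\cong f_{M-q}\Omega^\infty_T\sE_2$), yields
\[
\Fil_\Tate^{n-q}\Pi_{r-c-1,0}f_{M-q}\sE_2 \;\cong\; \Fil_\Tate^{n-q}\Pi_{r-c-1,0}f_{M-q}\Omega^\infty_T\sE_2.
\]
Proposition~\ref{prop:MainInd} applied to $\sE_2$ with $e=d(\sE_2)=d(\sE)-q$, parameters $(r-c-1,\,0,\,M-q)$, and filtration index $N=n-q$, then gives
\[
\Fil_\Tate^{n-q}\Pi_{r-c-1,0}f_{M-q}\Omega^\infty_T\sE_2(F)[1/p] \;=\; 0
\]
for every field extension $F/k$ with $\trdim_k F\le d$, whenever $n-q\ge N_k(d(\sE)-q,\,r-c(\sE)-1,\,d)$, which is precisely the bound in the theorem.

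Finally I would pass from field values to the Nisnevich stalk at $x$. Let $K$ be the fraction field of the Henselization $\sO^h_{X,x}$; as $X$ is smooth of dimension $d$ over $k$, $K$ is a field extension of $k$ with $\trdim_k K\le d$. By Morel's theorem, the Nisnevich sheaves $\Pi_{a,b}\sF$ for $\sF\in\SH(k)$ are strictly $\A^1$-invariant, so they satisfy Gersten injectivity: the restriction $\sS(\sO^h_{X,x})\hookrightarrow\sS(K)$ is injective for such $\sS$. Since $\Fil_\Tate^n\Pi_{r,q}f_M\sE$ is a subsheaf of $\Pi_{r,q}f_M\sE$ compatibly with restriction, the stalk $(\Fil_\Tate^n\Pi_{r,q}f_M\sE)_x$ injects into $\Fil_\Tate^n\Pi_{r,q}f_M\sE(K)$, and the field-level vanishing established above (preserved by all the natural isomorphisms invoked) forces the stalk to vanish after inverting $p$. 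The main obstacle is the validity of Gersten injectivity in this form---in particular, that it applies to the image subsheaf; this rests on Morel's theory of strictly $\A^1$-invariant Nisnevich sheaves on $\Sm/k$. Once this is granted, the remainder of the argument is careful bookkeeping through the shifts.
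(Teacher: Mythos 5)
Your proof is correct and follows essentially the same route as the paper: reduce to $c(\sE)\ge -1$ via $\Sigma_{S^1}$-shift, normalize the $\G_m$-weight via $\Omega_{\G_m}$ and the $\SH(k)$-extension of \eqref{eqn:SliceIso2}, apply lemma~\ref{lem:SliceIso} and proposition~\ref{prop:MainInd}, and then pass from field values to Nisnevich stalks by Gersten-type injectivity. Two small remarks: (a) the paper only normalizes when $q<0$ (proposition~\ref{prop:MainInd} already handles all $q\ge 0$), while you normalize uniformly to $q=0$ — both work and give the same bound; and (b) your citation of remark~\ref{rem:ConnectTopConnect} for $c(\Omega^q_{\G_m}\sE_1)\ge -1$ is misplaced, since that remark concerns $S^1$-spectra and the statement for $T$-spectra is immediate from $\Pi_{r,b}\Omega_{\G_m}\sE\cong\Pi_{r,b+1}\sE$; also, for the final step you only need Gersten injectivity for the ambient sheaf $\Pi_{r,q}f_M\sE$ (the stalk of the subsheaf then injects by left-exactness of stalks), so the concern you flag about the image subsheaf does not actually arise — the paper obtains the same injectivity directly from Morel's connectivity lemma rather than the full strictly $\A^1$-invariant sheaf machinery.
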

 
 As $\SH(k)_\fin$ is a subcategory of $\SH(k)_\cfin$ in case $k$ has finite cohomological dimension (proposition~\ref{prop:Vanishing1}(3)), our  main theorem~\ref{IntroThm:Main} follows immediately from theorem~\ref{thm:Main}, with $N(\sE, r,d,q):=N_k(d(\sE)-q,r-c(\sE)-1,d)+q$.

\begin{proof}[Proof of theorem~\ref{thm:Main}] As before, we invert $p$ throughout the proof and omit the ``$[1/p]$'' from the notation.

The proof of \cite[lemma 6.1.4]{MorelConn} shows that, for $x\in X\in\Sm/k$,  $X_x:=\Spec \sO_{X,x}$ and $U\subset X_x$ open, the map
$X_x\to X_x/U$
in $\sH(k)$ is equal to the map sending $X_x$ to the base-point of $X_x/U$. This implies that for any $\sF\in \SH(k)$, $a,b\in\Z$, the restriction map
\[
[\Sigma^a_{S^1}\Sigma^b_{\G_m}\Sigma^\infty X_{x+},\sF]_{\SH(k)}\to
[\Sigma^a_{S^1}\Sigma^b_{\G_m}\Sigma^\infty U_+,\sF]_{\SH(k)}
\]
is injective. Passing to the limit over $U$, this shows that the restriction map
\[
\Pi_{a,b}(\sF)_x\to \Pi_{a,b}(\sF)(k(X))
\]
is injective. From this it easily follows that the restriction map 
\[
(\Fil_\Tate^n\Pi_{r,q}f_M\sE)_x\to (\Fil_\Tate^n\Pi_{r,q}f_M\sE)(k(X))
\]
is injective.

Thus,  it suffices to show that $\Fil_\Tate^n\Pi_{r,q}f_M\sE(F)=0$ for $M\in\Z$, for $n\ge N(d(\sE)-q,r-c(\sE)-1,d)+q$, and  for all finitely generated fields $F$ over $k$ with $\trdim_kF\le d$.

As 
\[
\Fil_\Tate^n\Pi_{r+p,q}\Sigma^p_{S^1}\sE=\Fil_\Tate^n\Pi_{r,q}\sE
\]
we may replace $\sE$ with $\Sigma^{-c(\sE)-1}_{S^1}\sE$ and assume that $c(\sE)\ge-1$. Similarly, if $q<0$ we may  replace $\sE$ with $\Sigma^{-q}_{\G_m}\sE$, since 
\[
\Fil_\Tate^{n-q}\Pi_{r,0}f_{M-q}\Sigma^{-q}_{\G_m}\sE=\Fil_\Tate^n\Pi_{r,q} f_M\sE
\]
and $d(\Sigma^{-q}_{\G_m}\sE)=d(\sE)-q$.  This reduces us to the case $q\ge0$ and $c(\sE)\ge -1$. Letting $E=\Omega^\infty_T\sE$, we have 
\[
\Fil_\Tate^n\Pi_{r,q}f_M\sE=\Fil_\Tate^n\Pi_{r,q}\Omega^\infty_Tf_M\sE=\Fil_\Tate^n\Pi_{r,q}f_ME
\]
for all $M\in\Z$, $q\ge 0$, $n\ge 0$, so the result follows from proposition~\ref{prop:MainInd}.
\end{proof}

\begin{cor}\label{cor:Main} Take $k$, $x$, $d$ and $\sE$  as in theorem~\ref{thm:Main}. Then for all $q\in \Z$, $(\Pi_{r,q}f_M\sE)_x[1/p]=0$
for all $M\ge N_k(d(\sE)-q,r-c(\sE)-1,d)+q$.
\end{cor}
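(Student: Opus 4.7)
The plan is to deduce this corollary directly from Theorem~\ref{thm:Main} by specializing the filtration index to coincide with the truncation index. By definition,
\[
\Fil_\Tate^n\Pi_{r,q}f_M\sE = \im\bigl(\Pi_{r,q}f_nf_M\sE \to \Pi_{r,q}f_M\sE\bigr),
\]
where the arrow is induced by $\rho_n(f_M\sE)$. My first step is to observe that this image equals the entire target whenever $n \le M$.

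To see this, I would invoke Lemma~\ref{lem:SliceCompat}, which asserts that $\rho_n(f_N): f_nf_N \to f_N$ is a natural isomorphism whenever $N \ge n$. Applied with $N = M$ and any $n \le M$, it gives $f_nf_M \sE \cong f_M\sE$ via $\rho_n(f_M\sE)$, so the induced map on $\Pi_{r,q}$ is an isomorphism. Hence $\Fil_\Tate^n\Pi_{r,q}f_M\sE = \Pi_{r,q}f_M\sE$ for every $n \le M$; in particular, taking $n = M$,
\[
\Fil_\Tate^M\Pi_{r,q}f_M\sE = \Pi_{r,q}f_M\sE.
\]

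To finish, I would apply Theorem~\ref{thm:Main} to the object $f_M\sE$ itself (or just to $\sE$, treating $M$ as the free variable) at the filtration index $n = M$. The hypothesis $M \ge N_k(d(\sE)-q,\,r-c(\sE)-1,\,d) + q$ is exactly the bound required by the theorem for $n = M$, so
\[
(\Fil_\Tate^M\Pi_{r,q}f_M\sE)_x[1/p] = 0,
\]
and combining with the previous identification yields $(\Pi_{r,q}f_M\sE)_x[1/p] = 0$, as desired. I do not anticipate any real obstacle here: the corollary is essentially a repackaging of Theorem~\ref{thm:Main}, once one notes via Lemma~\ref{lem:SliceCompat} that the Tate filtration on $\Pi_{r,q}f_M\sE$ stabilizes at the whole group at index $M$.
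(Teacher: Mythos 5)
Your proof is correct and takes essentially the same route as the paper: Lemma~\ref{lem:SliceCompat} gives $\rho_n(f_M\sE)$ an isomorphism for $n\le M$, so $\Fil_\Tate^M\Pi_{r,q}f_M\sE=\Pi_{r,q}f_M\sE$, and Theorem~\ref{thm:Main} at filtration index $n=M$ (with $M$ the free truncation variable already present in the theorem's statement) finishes the argument.
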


\begin{proof} Indeed, for $M\ge n$, $f_nf_M\cong f_M$, hence $\Fil_\Tate^n\Pi_{r,q}f_M\sE=\Pi_{r,q}f_M\sE$. The result thus follows from 
theorem~\ref{thm:Main}. 
\end{proof}

\begin{rem} Althoug $N_k(e,r,d)$ is defined  recursively, it has a simple expression: For $D_k+d\ge 0$, 
\[
N_k(e,r,d)=\begin{cases} (r+1)(D_k+d)+\frac{1}{2}(r+1)(r+2)&\text{ if } e\le D_k+d, r\ge0 \\
e+ r(D_k+d)+\frac{1}{2}(r+1)(r+2)&\text{ if } e> D_k+d, r\ge0\\
0&\text{ if } r<0.\end{cases}
\]
In particular, $N_k(e,r,d)$ is an increasing function in each variable (assuming $D_k+d\ge0$). Thus, one can apply theorem~\ref{thm:Main} or corollary~\ref{cor:Main} even if one only has an upper bound for $d(\sE)$ and a lower bound for $c(\sE)$.

For instance, for $Y\in \Sm/k$, we have $d(\Sigma^\infty_TY_+)\le \dim_kY$ and $c(\Sigma^\infty_TY_+)\ge-1$. Thus, for  $x\in X\in \Sm/k$, $r\ge0$, $M\in\Z$, we have   $(F^N_\Tate \Pi_{r,q}f_M\Sigma^\infty_TY_+)_x=0$ for 
\[
N\ge  
\max(\dim_kY-q,  D_k+\dim_kX)+r(D_k+\dim_kX)+\frac{1}{2}(r+1)(r+2)+q. 
\]
We do not know if any of these bounds are sharp. 
\end{rem}

\begin{appendix}
\section{Norm maps}\label{sec:Norm} Suppose our perfect base-field $k$ has characteristic $p>0$. For an abelian group $A$, we write $A'$ for $A\otimes_\Z\Z[1/p]$. Our task in this section is to prove 

\begin{prop}\label{prop:InvertP} 1. Suppose that $k$ is a perfect field of characteristic $p>0$. Take $\sE\in \SH(k)$ and let $\alpha:F\to L$ be a purely inseparable extension of finitely generated fields over $k$. Then the map
\[
\alpha^*:\Pi_{a,b}\sE(F)'\to \Pi_{a,b}\sE(L)'
\]
is injective.
\\
2. Suppose that $k$ is a finite field. Take $\sE\in \SH(k)$ and let $\alpha:k\to k'$ be a finite extension of degree $n$, let $F$ be a field extension of $k$, and let $F'=F\otimes_kk'$. Then the kernel of 
\[
\alpha^*:\Pi_{a,b}\sE(F)\to \Pi_{a,b}\sE(F')
\]
is $n$-torsion.
\end{prop}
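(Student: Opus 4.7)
I would proceed via Frobenius. Since $L/F$ is a finite purely inseparable extension, there exists an integer $N$ with $L^{p^N}\subset F$, and defining $\beta\colon L\to F$ by $\beta(y)=y^{p^N}$ gives compositions $\alpha\circ\beta=\phi^N_L$ and $\beta\circ\alpha=\phi^N_F$, the $N$-fold absolute Frobenius on $L$ and $F$ respectively. Modelling $\Spec F$ and $\Spec L$ as pro-objects in $\Sm/k$, I obtain morphisms $\Spec\alpha$ and $\Spec\beta$ in $\SH(k)$ whose compositions are absolute Frobenius morphisms on smooth pro-schemes. The essential input is the fact that, for $k$ perfect of characteristic $p$, the absolute Frobenius on any smooth $k$-scheme becomes an isomorphism in $\SH(k)[1/p]$; this should follow from Voevodsky's analysis of $DM$ in positive characteristic combined with the $\SH$–$DM$ comparison (or may be cited from Cisinski--D\'eglise). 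Passing to the pro-limit via compact generation, both $\Spec\alpha$ and $\Spec\beta$ then induce isomorphisms in $\SH(k)[1/p]$, so $\alpha^*$ on $\Pi_{a,b}\sE(-)'$ is an isomorphism, hence injective.

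\textbf{Plan for Part 2.} Since $k$ is finite, the extension $k'/k$ is automatically Galois with cyclic group $G$ of order $n$, so $\Spec F'\to\Spec F$ is a finite \'etale $G$-torsor. The plan is to construct a transfer $\tau\colon\Pi_{a,b}\sE(F')\to\Pi_{a,b}\sE(F)$ with $\tau\circ\alpha^*=n\cdot\id$, which immediately yields that $\ker\alpha^*$ is killed by $n$. The natural candidate is the Galois-sum construction: $G$ acts on $\Pi_{a,b}\sE(F')$ through its action on $\Spec F'$ over $\Spec F$, and for any $x\in\Pi_{a,b}\sE(F)$ the element $\alpha^*(x)$ is $G$-invariant, so $\sum_{\sigma\in G}\sigma^*\alpha^*(x)=n\cdot\alpha^*(x)$. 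It remains to factor this sum through $\Pi_{a,b}\sE(F)$, which amounts to a descent identification $\Pi_{a,b}\sE(F')^G\simeq\Pi_{a,b}\sE(F)$ (at least up to $n$-torsion). An alternative is to use a Becker--Gottlieb/Morel transfer for the finite \'etale cover, whose composition with $\alpha^*$ is multiplication by the trace form $\langle\tr_{F'/F}\rangle\in\GW(F)$, then to exploit the specific structure of $\GW(\F_q)$ to control the discrepancy between this trace form and $n\cdot\langle1\rangle$.

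\textbf{The hard part.} For Part 1, the main technical point is the Frobenius-is-iso statement in $\SH(k)[1/p]$ together with its passage to pro-objects; this should be routine given the literature. For Part 2, the delicate step is producing the factorization through $\Pi_{a,b}\sE(F)$ — equivalently, either establishing the requisite Galois-descent identification, or showing that Morel's transfer and the Galois sum agree on the nose (not merely modulo the augmentation ideal $I(F)$). The finiteness of $k$ is essential precisely because it forces $k'/k$ to be cyclic Galois and constrains $I(\F_q)$ to be small enough that the quadratic refinement does not spoil the integer-divisibility required for the $n$-torsion conclusion.
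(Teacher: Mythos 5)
Your Frobenius reduction for Part~1 is conceptually appealing but rests on an input that does not follow from what you cite. The equivalence $\SH(k)_\Q^+\simeq \DM(k)_\Q$ used in the paper is a rational statement about the plus summand only; it says nothing about $\SH(k)[1/p]$, whose minus part and all of whose $\ell$-torsion for $\ell\neq p$ are invisible to $\DM$. Thus ``Frobenius becomes an isomorphism in $\SH(k)[1/p]$'' cannot be transported from the $\DM$-level statement through the comparison functor. It is a genuine theorem in its own right, and the known proofs of it are themselves transfer arguments of precisely the kind the paper carries out here, so as written your route is either circular or relies on an unavailable citation. The paper instead proceeds by a self-contained, one-variable construction: Morel--Voevodsky purity at the closed point $p(a)\subset\A^1_F$ cut out by $X^p-a$ furnishes a transfer $\Tr_{p(a)/F}$, and the decisive trick is the explicit $\A^1$-family $X^p+t(X+1)+(t-1)a$, which is smooth over $\A^1_F$ (because $a\notin(F^\times)^p$) and deforms the purely inseparable point at $t=0$ to the separable, $\F_p$-rational point defined by $X^p+X+1$ at $t=1$. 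The composite endomorphism $\pi_a\circ\Tr_{p(a)/F}$ of $\mS_k\wedge\Spec F_+$ is therefore $\A^1$-homotopic to one pulled back from $\End_{\SH(\F_p)}(\mS_{\F_p})\cong\GW(\F_p)$, where it has rank $p$; since the augmentation ideal of $\GW(\F_p)$ is nilpotent, this is a unit after inverting $p$. No appeal to a general Frobenius-invertibility theorem is needed.

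For Part~2, your Galois-sum alternative is circular: the factorization of $\sum_{\sigma}\sigma^*\alpha^*$ through $\Pi_{a,b}\sE(F)$ is exactly the kind of descent statement that Part~2 is meant to deliver. Your Morel-transfer alternative is essentially the paper's argument, but you stop precisely at what you call ``the hard part.'' The paper builds the transfer by purity at the closed point of $\A^1_k$ cut out by the minimal polynomial of a generator of $k'/k$, identifies the composite in $\End_{\SH(k)}(\mS_k)\cong\GW(k)$ as an element $\gamma$ of rank $n$, and uses that the augmentation ideal $I\subset\GW(k)$ is nilpotent (for $k$ finite, $I^2=0$, since $\cd_2(\F_q)=1$ and $I^m/I^{m+1}\cong H^m_{\et}(\F_q,\mu_2^{\otimes m})$ together with Arason--Pfister) to produce $\beta\in\GW(k)$ with $\beta\gamma$ a multiple of the identity that forces $\ker\alpha^*$ to be $n$-torsion. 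The ``quadratic discrepancy'' you worry about is exactly what this nilpotence controls; you point at the correct structural fact about $\GW(\F_q)$ but do not actually invoke it to close the argument.
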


As one would expect, we construct a quasi-inverse to $\alpha^*$ by constructing transfers. We first discuss (1). Take $a\in F^\times\setminus (F^\times)^p$ and consider the extension $F_a:=F(a^{1/p})$.  We have the corresponding closed point $p(a)$ of $\A^1_F$ defined by the homogeneous ideal $(X^p-a)\subset F[X]$. This gives us the closed point $p(a)$ of $\P^1_F$ via the standard open immersion $x\mapsto (1:x)$ of $\A^1$ into $\P^1$.

Suppose $F=k(U)$ for some finite type $k$-scheme $U$. Since $k$ is perfect, $U$ has a dense open subscheme smooth over $k$; shrinking $U$ and changing notation, we may assume that $U$ is affine and smooth over $k$,  and also that $a$ is a global unit on $U$. Let $V\subset \A^1\times U$ be the closed subscheme defined by $X^p-a$. Then $V$ is reduced and irreducible (since $a\not\in (F^\times)^p$) and is finite over $U$. Shrinking $U$ again, we may assume that $V$ is also smooth and affine over $k$. Let $I_V\subset k[U][t]$ be the ideal defining $V$ in $\A^1\times U$.

Using $X^p-a$ as generator for $I_V/I_V^2$, we have the Morel-Voevodsky purity isomorphism \cite[theorem 3.2.23]{MorelVoev}
\[
\A^1\times U/(\A^1\times U\setminus V)\cong \P^1_V.
\]
Combining with the excision isomorphism $\A^1\times U/(\A^1\times U\setminus V)\cong
\P^1\times U/(\P^1\times U\setminus V)$ and passing to the limit over open subschemes of $U$
 gives us the sequence of maps of  pro-objects in $\sH_\bullet(k)$:
\[
(\P^1_F,\infty)\to \P^1_F/\P^1_F\setminus\{p(a)\}\cong (\P^1_{F_a},\infty);
\]
we denote the composition by $\Tr_{p(a)/F}$. Passing to $\SH(k)$ gives us the morphism
\[
\Tr_{p(a)/F}: \mS_k\wedge\Spec F_+\to  \mS_k\wedge p(a)_+=\mS_k\wedge \Spec F_{a+}
\]
where we consider these objects as pro-objects in $\SH(k)$. Composing with the map induced by the structure morphism $\pi_a:p(a)\to \Spec F$ gives us the endomorphism $\pi_{a}\circ \Tr_{p(a)/F}$ of $\mS_k\wedge\Spec F_+$. 

Proposition~\ref{prop:InvertP}(1) is an immediate consequence of

\begin{lem} After inverting $p$, the  the endomorphism 
\[
\pi_{a}\circ \Tr_{p(a)/F}:\mS_k\wedge\Spec F_+\to \mS_k\wedge\Spec F_+
\]
 is an isomorphism.
\end{lem}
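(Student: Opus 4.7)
The plan is to identify the composition $\pi_a\circ\Tr_{p(a)/F}$, up to the natural motivic equivalences, with the endomorphism of $\mS_k\wedge\Spec F_+$ induced by the $p$-th power self-map of $(\P^1_F,\infty)$, and then to verify that this endomorphism is a unit after inverting $p$.

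Let $\phi\colon\P^1_F\to\P^1_F$ be the $k$-morphism given by $x\mapsto x^p$ on $\A^1_F$ and fixing $\infty$. Since $(X-a)$ is the ideal of $\{a\}\subset\A^1_F$ and $\phi^*(X-a)=X^p-a$, scheme-theoretically $\phi^{-1}(\{a\})=p(a)$; hence $\phi$ restricts to a morphism $\P^1_F\setminus p(a)\to\P^1_F\setminus\{a\}$ and yields a commutative square in $\sH_\bullet(k)$ (after passing to the pro-system used to construct $\Tr_{p(a)/F}$)
\[
\xymatrix{
(\P^1_F,\infty)\ar[r]^-{\phi}\ar[d]&(\P^1_F,\infty)\ar[d]\\
\P^1_F/(\P^1_F\setminus p(a))\ar[r]_-{\bar\phi}&\P^1_F/(\P^1_F\setminus\{a\}).
}
\]
The key observation is that under the Morel--Voevodsky purity isomorphisms built from the conormal generators $X^p-a$ and $X-a$ respectively, $\bar\phi$ corresponds to $T\wedge\pi_a\colon T\wedge p(a)_+\to T\wedge\Spec F_+$: the identity $\phi^*(X-a)=X^p-a$ makes the map on trivialised conormal bundles the identity, while the underlying map of base schemes is $\pi_a$. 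Together with the motivic weak equivalence $(\P^1_F,\infty)\simeq\P^1_F/(\P^1_F\setminus\{a\})$ (from $\A^1$-contractibility of $\P^1_F\setminus\{a\}\cong\A^1_F$), this exhibits $\pi_a\circ\Tr_{p(a)/F}$, as an endomorphism of $\mS_k\wedge\Spec F_+$, as the class of $\phi$.

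By Morel's computation of the motivic Brouwer degree of $x\mapsto x^n$ on $\G_m$ (cf.~\cite{MorelLec}), the class of $\phi$ lies in $\GW(F)\subset\End_{\SH(k)}(\mS_k\wedge\Spec F_+)$ and equals $p_\epsilon:=\sum_{i=1}^p\langle(-1)^{i-1}\rangle$, so it remains to verify that $p_\epsilon$ is a unit in $\GW(F)[1/p]$. For odd $p$ one writes $p_\epsilon=p+\tfrac{p-1}{2}(\H-2)=p\bigl(1+\tfrac{p-1}{2p}(\H-2)\bigr)$ and uses the identity $(\H-2)^2=-2(\H-2)$ in $\GW(F)$ to exhibit $1-\tfrac{p-1}{2}(\H-2)$ as a two-sided inverse of the second factor in $\GW(F)[1/p]$; for $p=2$, the hypothesis $\Char k=2$ forces $\langle-1\rangle=\langle 1\rangle$ in $\GW(F)$, whence $p_\epsilon=2\langle 1\rangle$, which is trivially a unit after inverting $2$.

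The main obstacle is the identification $\bar\phi\simeq T\wedge\pi_a$ under purity: while the formal matching of conormal generators via $\phi^*(X-a)=X^p-a$ is the essential input, tracking this identification through the excision, normal-cone deformation and pro-limit constructions underlying $\Tr_{p(a)/F}$ in the excerpt requires careful bookkeeping, and some attention is needed because in characteristic $p$ the map $\phi$ is itself purely inseparable, so the Morel--Voevodsky purity theorem must be applied in this ramified setting.
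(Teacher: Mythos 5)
Your argument is correct in outline but takes a genuinely different route from the paper. The paper never touches the Frobenius: it instead deforms the \emph{closed subscheme} $p(a)=V(X^p-a)\subset\A^1_F$, via the pencil $V(X^p+t(X+1)+(t-1)a)\subset\A^1_{F[t]}$, to the étale $\F_p$-point $p_0=V(X^p+X+1)$. The resulting $\A^1$-homotopy shows $\pi_a\circ\Tr_{p(a)/F}=\pi_0\circ\Tr_{p_0/F}$, and the latter is pulled back from an element of $\End_{\SH(\F_p)}(\mS_{\F_p})\cong\GW(\F_p)$ of rank $p$; since $I(\F_p)^2=0$, rank $p$ already forces invertibility in $\GW(\F_p)[1/p]$. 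You instead identify $\pi_a\circ\Tr_{p(a)/F}$ with the $p$-th power self-map $\phi$ of $(\P^1_F,\infty)$ via functoriality of Morel–Voevodsky purity, invoke Morel's Brouwer degree $[\phi]=p_\epsilon=\sum_{i=1}^{p}\langle(-1)^{i-1}\rangle$, and invert $p_\epsilon$ explicitly in $\GW(F)[1/p]$ using $(\H-2)^2=-2(\H-2)$. Both routes are legitimate. What the paper's approach buys is that it sidesteps precisely the issue you flag: the functoriality of the purity isomorphism along the purely inseparable, everywhere-ramified $\phi$ is not the smooth-morphism functoriality that appears in \cite{MorelVoev}, and while it does hold (because $\Phi^{-1}(V(X-a))=V(X^p-a)$ scheme-theoretically, so $\Phi$ extends to a map of deformation-to-the-normal-cone spaces whose special fibre is $T\wedge\pi_a$ once one trivialises by the matching generators $X-a\mapsto X^p-a$), it requires exactly the ``careful bookkeeping'' you acknowledge and is not quotable off the shelf. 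The paper's $\A^1$-homotopy replaces that bookkeeping by a one-line smoothness check on the pencil and a reduction to $\GW(\F_p)$, where no explicit inverse formula is needed. What your approach buys is a concrete identification of the transfer composite as the motivic degree $p_\epsilon$, rather than an a priori unidentified rank-$p$ element. One minor point: Morel's computation of the degree of $x\mapsto x^n$ is really a statement about $\G_m$; you should say a word about why the self-map of $(\P^1,\infty)$ you define has the same class (for instance, that $\phi$ preserves the cover $\P^1=\A^1_0\cup\A^1_\infty$ and restricts to $x\mapsto x^p$ on $\G_m$, so under $(\P^1,\infty)\simeq S^1\wedge\G_m$ it is the $S^1$-suspension of the $p$-th power on $\G_m$). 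With that addition and the purity-functoriality verification spelled out, your proof stands.
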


\begin{proof}
We consider a deformation of  $\Tr_{F_a/F}$. Let $P(a)\subset \Spec F[t][X]$ be the closed subscheme defined by the ideal $(X^p+t(X+1)+(t-1)a)$. We have
\[
d(X^p+t(X+1)+(t-1)a)=(X+a+1)dt+tdX\in \Omega_{F[t][X]/F}.
\]
Thus the singular locus of $P(a)$ (over $F$) is given by 
\[
(X+a+1=t=0)\cap(X^p+t(X+1)+(t-1)a=0); 
\]
since $a\not\in (F^\times)^p$, the singular locus is empty, i.e., $P(a)$ is smooth over $F$. Using $X^p+t(X+1)+(t-1)a$ as the generator for $I_{P(a)}/I^2_{P(a)}$, we have as above the map 
\[
\Tr_{P(a)/F[t]}: \P^1_k\wedge\Spec F[t]_+\to  \P^1_k\wedge P(a)_+
\]
of pro-objects in  $\sH_\bullet(k)$, and the endomorphism
\[
\pi_{P(a)/F[t]}\circ \Tr_{P(a)/F[t]}: \mS_k\wedge\Spec F[t]_+\to  \mS_k\wedge\Spec F[t]_+
\]
of pro-objects in $\SH(k)$. 

Setting $t=1$ gives us the closed subscheme  $p_0$ of $\Spec F[X]$ defined by the ideal $(X^p+X+1)$. Clearly $\pi_0:p_0\to \Spec F$ is finite and \'etale of degree $p$. Using $X^p+X+1$ as generator of $I_{p_0}/I^2_{p_0}$ gives us the map 
\[
\Tr_{p_0/F}: (\P^1_k,\infty)\wedge\Spec F_+\to  (\P^1_k,\infty)\wedge p_{0+}
\]
of pro-objects in  $\sH_\bullet(k)$, and the endomorphism
\[
\pi_{0}\circ \Tr_{p_0/F}: \mS_k\wedge\Spec F_+\to  \mS_k\wedge\Spec F_+
\]
of pro-objects in $\SH(k)$. 

The map $\pi_{P(a)/F[t]}\circ \Tr_{P(a)/F[t]}$ thus gives us an $\A^1$-homotopy between the maps $\pi_{a}\circ \Tr_{p(a)/F}$ and $\pi_{0}\circ \Tr_{p_0/F}$. Thus, these maps are equal in $\SH(k)$, and it suffices to show that 
$\pi_{0}\circ \Tr_{p_0/F}$ is an isomorphism after inverting $p$.

As $X^p+X+1$ has coefficients in $\F_p$, $\pi_{0}\circ \Tr_{p_0/F}$ arises as base-extension from the similarly defined map 
\[
\pi_{0}\circ \Tr_{p_0/\F_p}: \mS_{\F_p}\to  \mS_{\F_p}
\]
in $\SH(\F_p)$, that is, from the corresponding element $[\pi_{0}\circ \Tr_{p_0/\F_p}]\in [\mS_{\F_p},\mS_{\F_p}]_{\SH(\F_p)}$. 
 
By Morel's theorem  $[\mS_{\F_p},\mS_{\F_p}]_{\SH(\F_p)}\cong \GW(\F_p)$ (see \cite[lemma 3.10,  corollary 6.41]{MorelA1}). Since $[\F_p(p_0):\F_p]=p$, it follows that the image of $\pi_{0}\circ \Tr_{p_0/\F_p}\in [\mS_{\F_p},\mS_{\F_p}]_{\SH(\F_p)}$ under the rank homomorphism $\GW(\F_p)\to \Z$ is $p$. Since the augmentation ideal $I\subset \GW(\F_p)$ is nilpotent (in fact $I^2=0$), it follows that $\pi_{0}\circ \Tr_{p_0/\F_p}$ is a unit in $\GW(\F_p)[1/p]$, completing the proof of (1).

The proof of proposition~\ref{prop:InvertP}(2) is similar. Write $k'=k(a)$, let $f(x)\in k[x]$ be the minimal polynomial of $a$ and let $p\in \A^1_k$ be the closed point defined by the ideal $(f(x))$. Using $f(x)$ as the generator for $I_p/I_p^2$, the Morel-Voevodsky purity isomorphism \cite[\hbox{\it{loc. cit.}}]{MorelVoev} defines the map
\[
\Tr_{p/k}: (\P^1_k,\infty)\to  (\P^1_k,\infty)\wedge p_+
\]
 in  $\sH_\bullet(k)$, and the endomorphism
\[
\pi_p\circ \Tr_{p/k}: \mS_k\to  \mS_k 
\]
 in $\SH(k)$. We can use Morel's theorem again and identify $\End(\mS_k)$ with $\GW(k)$. Under this identification, $\pi_p\circ \Tr_{p/k}$ corresponds to an element $\gamma\in \GW(k)$, mapping to the degree $n$ under the rank homomorphism $\GW(k)\to \Z$. Since the augmentation ideal $I\subset \GW(k)$ is nilpotent, this implies that there is an element $\beta\in\GW(k)$ with $\beta\cdot \gamma=n$. Viewing $\beta$ as an endomorphism of $\mS_k$, this gives $\pi_p\circ(\Tr_{p/k}\circ\beta)=n\times\id$. As $\alpha^*=\pi_p^*\wedge\id_{\Spec F}$ on $\Pi_{a,b}\sE(F)$, this implies $n\cdot\ker \alpha^*=0$, completing the proof.
\end{proof}

 \section{Inverting integers in a triangulated category} \label{sec:Localization}
 
 We collect some notations and elementary facts concerning the localization of a triangulated category with respect to a multiplicatively closed subset of $\Z\setminus\{0\}$. If every object of the triangulated category is compact, there is no issue about what such a localization is and how to define it, however for ``large" triangulated categories, there are a number of definitions possible, as well as some issues involving extending adjoints. 
 
Although this material is quite elementary,  these facts on localization do not appear to be in the literature in the setting of compactly generated triangulated categories admitting arbitrary coproducts. Some of this material appears in an appendix to S. Kelly's doctoral thesis \cite{KellyThesis}; I am grateful to him for pointing out these results to me. 
 
 In what follows,  $\sT$ will be a triangulated category admitting arbitrary small coproducts.

 \begin{rem}\label{rem:TriangCats}
Suppose $\sR$ is a localizing subcategory of $\sT$ with a set of compact generators. Then (see e.g. \cite[construction 1.6]{NeemanThBousRav}), the Verdier localization $q:\sT\to \sT/\sR$ exists and admits a  right adjoint $r:\sT/\sR\to \sT$. $r$ induces an equivalence of $\sT/\sR$ with $\sR^\perp$, this being the full subcategory of objects $X\in \sT$ such that $\Hom_\sT(A,X)=0$ for all $A\in \sR$. If $R$ is a set of compact generators  for $\sR$, then $\sR^\perp=R^\perp$.

Let $R$ be a set of compact objects in $\sT$ and $\sR$ the localizing subcategory of $\sT$ generated by $R$, that is, the smallest localizing subcategory of $\sT$ containing $R$. We recall from \cite[theorem 2.1]{NeemanGrothDual} that $R$ is a set of generators for $\sR$, that is, if $X$ is an object of $\sR$ such that $\Hom_\sR(A,X)=0$ for all $A\in R$, then $X\cong 0$. Furthermore, the subcategory $\sR^c$ of compact objects in $\sR$ is the thick subcategory of $\sR$ generated by $R$. 
\end{rem}

Let $S$ be a multiplicatively closed subset of $\Z\setminus\{0\}$ containing 1.  Call an object $X$ in $\sT$ {\em $S$-torsion} if $n\cdot\id_X=0$ for some $n\in S$. We let $\sT_{S\text{-tor}}$ be the localizing subcategory of  $\sT$ generated by the compact $S$-torsion objects of $\sT$ and let  $\sT_{S^{-1}\Z}$ denote the Verdier localization $\sT/\sT_{S\text{-tor}}$. If $S$ is the set of powers of some integer $n$, we write $\Z[\frac{1}{n}]$ for $S^{-1}\Z$,  $\sT_{n^\infty\text{-tor}}$ for $\sT_{S\text{-tor}}$ and $\sT[\frac{1}{n}]$ for $\sT_{S^{-1}\Z}$. For an object $A$ of $\sT$ write $A_{S^{-1}\Z}$ or $A[\frac{1}{n}]$ for the image of $A$ in $\sT_{S^{-1}\Z}$ or $\sT[\frac{1}{n}]$; for an abelian group or (pre)sheaf of abelian groups $M$, we write $M_{S^{-1}\Z}$ for $M\otimes_\Z S^{-1}\Z$. For $S=\Z\setminus\{0\}$, we will of course write $\Q$ for $S^{-1}\Z$.

If $\sT$ is a compactly generated tensor  triangulated category such that the tensor product of compact objects is compact,  then $\sT_{S\text{-tor}}$ is a tensor ideal (since $n\cdot\id_{A\otimes B}=(n\cdot\id_A)\otimes\id_B$) so $\sT_{S^{-1}\Z}$ inherits a tensor structure from $\sT$, and the localization functor $\sT\to \sT_{S^{-1}\Z}$ is an exact tensor functor of tensor triangulated categories. 

For $A$ an object in  $\sT$, let $A\otimes^L\Z/n$ denote an object fitting into a distinguished triangle
\[
A\xrightarrow{n\cdot\id}A\to A\otimes^L\Z/n\to A[1];
\]
this defines $A\otimes^L\Z/n$ up to non-unique isomorphism. In addition, $A\otimes^L\Z/n$ is an $n^2$-torsion object. 

\begin{lem} \label{lem:Localization} Let $S$ be a subset of $\Z\setminus\{0\}$ containing 1, and suppose $\sT$ has a set $C$ of compact generators.\\
1. The set $C_S:=\{A\otimes^L\Z/n\ |\ A\in C, n\in S\}$
is a set of compact generators for $\sT_{S\text{-tor}}$.\\
2. For $A$, $X$ objects in $\sT$ with $A$ compact, there is a canonical isomorphism
\[
\Hom_\sT(A,X)_{S^{-1}\Z}\cong \Hom_{\sT_{S^{-1}\Z}}(A_{S^{-1}\Z,} X_{S^{-1}\Z}).
\]
In addition, for $A$ compact in $\sT$, $A_{S^{-1}\Z}$ is compact in $\sT_{S^{-1}\Z}$, and $\sT_{S^{-1}\Z}$ is compactly generated, with $C^S:=\{A_{S^{-1}\Z}, A\in C\}$ a set of compact generators.\\
3. $\sT_{S\text{-tor}}$ is equal to the full subcategory $\sT(S)$ of $\sT$ with objects $X$ such that $\Hom_\sT(A,X)_{S^{-1}\Z}=0$ for all compact $A$ in $\sT$.\\
4. $\SH(k)_{S\text{-tor}}$ is the full subcategory of $\SH(k)$ with objects those $\sK$ such that $\Pi_{a,b}(\sK)_{S^{-1}\Z}=0$ for all $a,b\in\Z$.\\
5.  A map $f:\sE\to \sF$ in $\SH(k)$ becomes an isomorphism in $\SH(k)_{S^{-1}\Z}$ if and only if the induced map on the homotopy sheaves
\[
f_*\otimes\id:\Pi_{a,b}(\sE)_{S^{-1}\Z}\to \Pi_{a,b}(\sF)_{S^{-1}\Z}
\]
is an isomorphism for all $a,b\in\Z$.\\
6. $\SH(k)_{S\text{-tor}}$ is a tensor ideal, hence $\SH(k)_{S^{-1}\Z}$ is a tensor triangulated category and the localization functor $\SH(k)\to \SH(k)_{S^{-1}\Z}$ is an exact tensor functor.
\end{lem}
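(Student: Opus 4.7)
My plan is to prove the six parts in sequence, with (1)--(3) purely triangulated-categorical and (4)--(6) specific to $\SH(k)$; parts (3), (5), and (6) will reduce essentially formally to (1), (2), and (4). For (1), each $A \otimes^L \Z/n$ is the cone of $n\cdot \id_A$ between compact objects, hence compact, and is $n^2$-torsion, so the localizing subcategory $\sR$ generated by $C_S$ lies inside $\sT_{S\text{-tor}}$. For the converse inclusion it suffices, by the definition of $\sT_{S\text{-tor}}$, to place every compact $S$-torsion object $X$ in $\sR$. Choosing $n\in S$ with $n\cdot \id_X = 0$ splits the defining triangle and gives $X \otimes^L \Z/n \cong X \oplus X[1]$, exhibiting $X$ as a retract of $X \otimes^L \Z/n$. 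The full subcategory $\{Y \in \sT : Y \otimes^L \Z/n \in \sR\}$ is thick (use the $3\times 3$ lemma to extend $-\otimes^L \Z/n$ to distinguished triangles) and contains the generators $C$, so contains every compact $Y \in \sT$ by Neeman's theorem; in particular $X \otimes^L \Z/n \in \sR$, hence $X \in \sR$.

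For (2), let $q$ be the Verdier quotient and $r$ its right adjoint, which exists by Neeman's theorem since $\sT_{S\text{-tor}}$ is a localizing subcategory generated by compact objects (as established in (1)); each $X$ then sits in a Bousfield triangle $L(X) \to X \to rqX \to L(X)[1]$ with $L(X) \in \sT_{S\text{-tor}}$ and $rqX \in (\sT_{S\text{-tor}})^\perp$. I first observe that $\Hom_\sT(A, Z)$ is $S$-torsion for any compact $A$ and any $Z \in \sT_{S\text{-tor}}$: the full subcategory of such $Z$ is closed under coproducts (by compactness of $A$) and triangles, and contains the compact $S$-torsion generators of $\sT_{S\text{-tor}}$ directly from $n\cdot \id_Z = 0$. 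Since multiplication by each $n \in S$ is invertible on $\Hom_\sT(A, rqX)$ (apply $\Hom(-,rqX)$ to the cone of $n\cdot\id_A$ and use $rqX \in (\sT_{S\text{-tor}})^\perp$), applying $\Hom_\sT(A, -)_{S^{-1}\Z}$ to the Bousfield triangle yields the desired identification $\Hom_\sT(A, X)_{S^{-1}\Z} \cong \Hom_\sT(A, rqX) \cong \Hom_{\sT_{S^{-1}\Z}}(qA, qX)$. Compactness of $qA$ and generation of $\sT_{S^{-1}\Z}$ by $C^S$ both follow formally from this identification together with the fact that $q$ preserves coproducts (having a right adjoint). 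Part (3) is then an immediate corollary of (2) applied to the generators $C$, and part (5) follows from (4) applied to the cone of $f$ via the long exact sequence of bigraded homotopy sheaves.

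The main obstacle is (4). One direction is formal: by (3), $\sK \in \SH(k)_{S\text{-tor}}$ implies that the presheaves $U \mapsto \Hom_{\SH(k)}(\Sigma^a_{S^1}\Sigma^b_{\G_m}\Sigma^\infty_T U_+, \sK)$ become zero after tensoring with $S^{-1}\Z$, and Nisnevich sheafification commutes with this flat extension, so $\Pi_{a,b}(\sK)_{S^{-1}\Z} = 0$ for all $a, b$. For the converse I will use the Nisnevich hyperdescent spectral sequence of the form $E_2^{p,q} = H^p_{\Nis}(U, \Pi_{a+p, b}\sK) \Rightarrow \Hom_{\SH(k)}(\Sigma^a_{S^1}\Sigma^b_{\G_m}\Sigma^\infty_T U_+, \sK)$ arising from a Nisnevich-fibrant replacement of the $S^1$-spectrum $\Omega^b_{\G_m}\Omega^\infty_T\sK$; strong convergence is available since $U \in \Sm/k$ has Nisnevich cohomological dimension at most $\dim U$. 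Because $S^{-1}\Z$ is flat over $\Z$, tensoring commutes with Nisnevich cohomology, so the hypothesis $\Pi_{*,*}(\sK)_{S^{-1}\Z} = 0$ annihilates the $E_2$ page and hence the abutment; (3) then places $\sK$ in $\SH(k)_{S\text{-tor}}$.

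Finally, (6) is proved directly by a two-step density argument, independently of (4): for $X$ compact $S$-torsion and $Y$ a compact generator $\Sigma^n_{S^1}\Sigma^m_T\Sigma^\infty Z_+$, the tensor product $X \otimes Y$ is again compact (the tensor of two such generators is another such generator) and $S$-torsion (from $n\cdot\id_X=0$), hence in $\SH(k)_{S\text{-tor}}$; the localizing subcategory $\{X : X \otimes Y \in \SH(k)_{S\text{-tor}}\}$ therefore contains all of $\SH(k)_{S\text{-tor}}$, and then the analogous localizing subcategory in the $Y$ variable contains all compact generators, hence all of $\SH(k)$. The remaining statements about the induced tensor structure on $\SH(k)_{S^{-1}\Z}$ and exactness of $q$ as a tensor functor are the standard consequences of Verdier localization at a tensor ideal.
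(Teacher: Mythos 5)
Your proof is correct and follows the same overall architecture as the paper (parts (1)--(3) purely triangulated, (4)--(6) specific to $\SH(k)$, with (3), (5) and (6) reduced formally to the others), but you deviate from the paper's argument in two noteworthy places. For (2), the paper computes $\Hom_{\sT_{S^{-1}\Z}}(qA,qX)$ directly as a calculus-of-fractions colimit, showing that the maps $n\cdot\id_A$ ($n\in S$) are cofinal among denominators $Y\to A$ with cone in $\sT_{S\text{-tor}}$, whence the colimit is $\Hom_\sT(A,X)_{S^{-1}\Z}$; you instead pass through the Bousfield localization triangle $LX\to X\to rqX$ and observe that $\Hom(A,LX)$ is $S$-torsion while $\Hom(A,rqX)$ is already an $S^{-1}\Z$-module. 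Both arguments rest on the same observation, which you spell out more explicitly than the paper does, namely that $\Hom_\sT(A,Z)$ is $S$-torsion for $A$ compact and $Z\in\sT_{S\text{-tor}}$ (the paper silently uses this to assert $n\cdot u=0$ for some $n\in S$). The roof argument is a bit more self-contained since it never invokes the Bousfield decomposition; your version is more structural. For (4), the paper uses the Gersten--Quillen coniveau spectral sequence to relate $[\Sigma^a_{S^1}\Sigma^b_{\G_m}\Sigma^\infty_T U_+,\sK]$ to the stalks $\Pi_{*,*}\sK(k(u))$, while you use the Nisnevich descent spectral sequence with $E_2=H^p_\Nis(U,\Pi_{*,*}\sK)$; both are concentrated in the range $0\le p\le\dim U$ and hence strongly convergent, and both make the $S$-torsion property of $\Pi_{*,*}\sK$ propagate to the abutment, so the choice is a matter of taste. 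Finally, in (1) and (6) you make explicit the thick-subcategory and two-step density arguments (running first over the compact variable using Neeman's description of compact objects, then over the general variable using the localizing property) that the paper compresses into ``which suffices to prove (6)''; these expansions are exactly what the paper intends and are welcome.
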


\begin{proof} For (1), clearly $C_S$ consists of compact objects of $\sT_{S\text{-tor}}$. If $\sX$ is $n$-torsion in  $\sT$, then $\sX$ is a summand of $\sX\otimes^L\Z/n$, hence the localizing subcategory of $\sT$ generated by  $C_S$ contains all the compact objects in $\sT_{S\text{-tor}}$, hence is equal to $\sT_{S\text{-tor}}$, proving (1). 

To prove (2), let $Y\xrightarrow{v} A\xrightarrow{u}Z\to Y[1]$ be a distinguished triangle in $\sT$ with $A$ compact and $Z$ in $\sT_{S\text{-tor}}$. Then $n\cdot u=0$ for some $n\in S$, so there is a map $w:A\to Y$ with $v\circ w=n\cdot\id_A$. Thus the category of maps $\{n\cdot\id:A\to A\}$ is cofinal in the category of maps $Y\to A$ with cone in $\sT_{S\text{-tor}}$, from which the isomorphism in (2) follows directly.  

As the localization functor $q:\sT\to \sT_{S^{-1}Z}$ admits a right adjoint, $q$ preserves all small coproducts; the isomorphism we have just proved shows that $A_{S^{-1}\Z}$ is compact if $A$ is compact. If $\Hom_\sT(A,X)_{S^{-1}\Z}=0$ for all $A\in C$, then clearly $X$ is in $\sT_{S\text{-tor}}$ hence $X_{S^{-1}\Z}=0$, completing the proof of (2). (3) follows immediately from (2).
 
 Clearly (5) follows from (4). To prove (4), let $\SH(k)_{S\text{-tor}}^*\subset \SH(k)$ be the full subcategory of objects $\sK$ as in (4). We recall that $\SH(k)$ has the set   of compact generators $\{\Sigma^a_{S^1}\Sigma^b_T\Sigma^\infty_TX_+\ |\ a,b\in \Z, X\in \Sm/k\}$.

 Take $\sK\in \SH(k)_{S\text{-tor}}^*$, $X\in \Sm/k$, and let $f:\Sigma^a_{S^1}\Sigma^b_T\Sigma^\infty_TX_+\to \sK$ be a morphism. Using the Gersten spectral sequence on $X$ and the assumption that $\sK$ is in $\SH(k)_{S\text{-tor}}^*$, we see there is an $n\in S$ such that $n\cdot f=0$. Thus $\SH(k)_{S\text{-tor}}^* \subset \SH(k)(S)$; the reverse inclusion follows from the definition of the homotopy sheaf $\Pi_{a,b}(\sK)$ as the Nisnevich sheaf associated to the presheaf
 \[
 X\mapsto [\Sigma^{a-b}_{S^1}\Sigma^b_T\Sigma^\infty_TX_+, \sK]_{\SH(k)}.
 \]
 By (3), this shows that $\SH(k)_{S\text{-tor}}^*=\SH_{S\text{-tor}}$.
 
For (6), we note that $\Sigma^\infty_TX_+\wedge \Sigma^\infty_TY_+\cong \Sigma^\infty_TX\times Y_+$. As the subcategory of compact objects of $\SH(k)$ is the thick subcategory generated by $C$ (see remark~\ref{rem:TriangCats}), it follows that the $\wedge$-product of compact objects is compact, which suffices to prove (6).
\end{proof}

\begin{rem} Lemma~\ref{lem:Localization}(4)-(6) hold with $\SH(k)$ replaced by $\SH_{S^1}(k)$, with the obvious modification in the statements.
\end{rem}

\begin{lem}\label{lem:LocAdjoint}  Let $L:\sT_1\to \sT_2$ be an exact functor of compactly generated triangulated categories, with right adjoint $R:\sT_2\to \sT_1$; we assume that $\sT_1$ and $\sT_2$ admit arbitrary small coproducts. Suppose that $L(A)$ is compact for $A$ in $\sT_1$ compact. Then\\
1. $R$ is compatible with small coproducts.\\
2. Let $S\subset \Z\setminus\{0\}$ be a multiplicatively closed subset. Then $(L, R)$ descends to an adjoint pair $L_{S^{-1}\Z}:\sT_{1S^{-1}\Z}\xymatrix{\ar@<3pt>[r]&\ar@<3pt>[l]}\sT_{2S^{-1}\Z}:R_{S^{-1}\Z}$.\\
3. Suppose that $L$ is the inclusion functor for a full triangulated subcategory $\sT_1$ of $\sT_2$. Then $L_{S^{-1}\Z}:\sT_{1S^{-1}\Z}\to \sT_{2S^{-1}\Z}$ is an isomorphism of $\sT_{1S^{-1}\Z}$ with its image in $\sT_{2S^{-1}\Z}$.
\end{lem}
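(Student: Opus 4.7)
For (1), the plan is to test the canonical map $\coprod_i R(X_i)\to R(\coprod_iX_i)$ against a set $C_1$ of compact generators of $\sT_1$. For $A\in C_1$ we compute
\[
\Hom_{\sT_1}(A,\coprod_i R(X_i))=\coprod_i\Hom_{\sT_1}(A,R(X_i))=\coprod_i\Hom_{\sT_2}(L(A),X_i),
\]
using the compactness of $A$ and the adjunction. Since $L(A)$ is compact in $\sT_2$ by hypothesis, this equals $\Hom_{\sT_2}(L(A),\coprod_iX_i)=\Hom_{\sT_1}(A,R(\coprod_iX_i))$. As the compact generators detect isomorphisms, this gives (1).

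For (2), the main point is that both $L$ and $R$ preserve the $S$-torsion subcategories. For $L$: it is a left adjoint, hence preserves coproducts; also, for compact $A\in\sT_1$ and $n\in S$, applying $L$ to the defining triangle of $A\otimes^L\Z/n$ identifies $L(A\otimes^L\Z/n)$ with $L(A)\otimes^L\Z/n$, which is $S$-torsion. Since $\sT_{1,S\text{-tor}}$ is the localizing subcategory generated by objects of the form $A\otimes^L\Z/n$ (by lemma~\ref{lem:Localization}(1)), it follows that $L$ sends $\sT_{1,S\text{-tor}}$ into $\sT_{2,S\text{-tor}}$. For $R$: using the characterization of the $S$-torsion subcategory in lemma~\ref{lem:Localization}(3), given $Y\in\sT_{2,S\text{-tor}}$ and a compact $A\in\sT_1$,
\[
\Hom_{\sT_1}(A,RY)_{S^{-1}\Z}\cong\Hom_{\sT_2}(LA,Y)_{S^{-1}\Z}=0,
\]
since $LA$ is compact and $Y$ is in $\sT_{2,S\text{-tor}}$. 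Hence $RY\in\sT_{1,S\text{-tor}}$. This gives the induced functors $L_{S^{-1}\Z}$ and $R_{S^{-1}\Z}$ between the Verdier quotients. To extend the adjunction, I would first use lemma~\ref{lem:Localization}(2) and the original adjunction to produce a natural isomorphism on compact sources: for $A\in\sT_1$ compact and any $Y\in\sT_2$,
\[
\Hom_{\sT_{2,S^{-1}\Z}}(L_{S^{-1}\Z}A_{S^{-1}\Z},Y_{S^{-1}\Z})\cong\Hom_{\sT_1}(A,RY)_{S^{-1}\Z}\cong\Hom_{\sT_{1,S^{-1}\Z}}(A_{S^{-1}\Z},R_{S^{-1}\Z}Y_{S^{-1}\Z}).
\]
By lemma~\ref{lem:Localization}(2), the objects $A_{S^{-1}\Z}$ with $A$ compact form a set of compact generators of $\sT_{1,S^{-1}\Z}$; since both sides of the candidate adjunction isomorphism are cohomological in the first variable and take coproducts to products, the isomorphism extends from compact generators to all objects of $\sT_{1,S^{-1}\Z}$.

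For (3), if $L$ is a fully faithful inclusion, then for compact $A,B\in\sT_1$, lemma~\ref{lem:Localization}(2) yields
\[
\Hom_{\sT_{2,S^{-1}\Z}}(L_{S^{-1}\Z}A_{S^{-1}\Z},L_{S^{-1}\Z}B_{S^{-1}\Z})\cong\Hom_{\sT_2}(LA,LB)_{S^{-1}\Z}\cong\Hom_{\sT_1}(A,B)_{S^{-1}\Z}\cong\Hom_{\sT_{1,S^{-1}\Z}}(A_{S^{-1}\Z},B_{S^{-1}\Z}),
\]
so $L_{S^{-1}\Z}$ is fully faithful on the compact generators of $\sT_{1,S^{-1}\Z}$. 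Fixing $Y\in\sT_{1,S^{-1}\Z}$ and varying $X$, both $\Hom_{\sT_{1,S^{-1}\Z}}(X,Y)$ and $\Hom_{\sT_{2,S^{-1}\Z}}(L_{S^{-1}\Z}X,L_{S^{-1}\Z}Y)$ are cohomological in $X$ and convert coproducts to products (the latter because $L_{S^{-1}\Z}$ is a left adjoint by part (2), hence preserves coproducts). The usual devissage argument---first take the thick subcategory of $\sT_{1,S^{-1}\Z}$ on which the map is an isomorphism, then take its localizing closure---shows that equality on compact generators propagates to all of $\sT_{1,S^{-1}\Z}$. This gives full faithfulness of $L_{S^{-1}\Z}$, which identifies $\sT_{1,S^{-1}\Z}$ with its essential image in $\sT_{2,S^{-1}\Z}$.

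The most delicate step will be part (2), specifically showing that $L$ and $R$ respect the $S$-torsion subcategories; the remainder is formal manipulation with adjunctions and standard devissage. For $R$, the clean formulation using lemma~\ref{lem:Localization}(3) is essential, as one cannot argue directly from generators on the right.
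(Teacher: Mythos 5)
Parts (1) and (3) are correct and close to the paper's own arguments: (1) is the standard "test against compact generators" verification of the coproduct/compactness duality that the paper only sketches, and (3) is the same devissage (fully faithful on compact generators, then the fixed-$Y$ subcategory where $L_{S^{-1}\Z}$ is fully faithful is localizing and contains the compact generators). In (2), your observation that $R$ preserves the $S$-torsion subcategory via lemma~\ref{lem:Localization}(3) --- $\Hom_{\sT_1}(A,RY)_{S^{-1}\Z}\cong\Hom_{\sT_2}(LA,Y)_{S^{-1}\Z}=0$ --- is a tidy alternative to the paper's route (which instead uses $R(A\otimes^L\Z/n)\cong R(A)\otimes^L\Z/n$ on $C_{2S}$ plus the coproduct-compatibility from part (1)).

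However, the last step of your proof of (2) has a genuine gap. You produce, for each compact $A$ and each $Y$, a composite isomorphism
\[
\Hom_{\sT_{2,S^{-1}\Z}}(L_{S^{-1}\Z}A_{S^{-1}\Z},Y_{S^{-1}\Z})\cong\Hom_{\sT_{1,S^{-1}\Z}}(A_{S^{-1}\Z},R_{S^{-1}\Z}Y_{S^{-1}\Z}),
\]
and then assert that, because both sides are cohomological in the first variable and convert coproducts to products, "the isomorphism extends from compact generators to all objects." The devissage you are invoking applies to a \emph{natural transformation} between cohomological functors --- one shows the full subcategory on which it is an isomorphism is localizing --- but you have not exhibited any natural transformation between the two functors on all of $\sT_{1,S^{-1}\Z}$, only a family of object-wise isomorphisms defined via lemma~\ref{lem:Localization}(2), which is available only when the source object is compact. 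To repair this you would need to first descend the counit $\epsilon:LR\to\id$ to $\epsilon_{S^{-1}\Z}:L_{S^{-1}\Z}R_{S^{-1}\Z}\to\id$, use it to define the candidate adjunction map $g\mapsto\epsilon_{S^{-1}\Z}\circ L_{S^{-1}\Z}(g)$ on all objects, check it agrees with your ad hoc isomorphisms on compacts, and then run the devissage. The paper avoids this issue entirely by identifying $\sT_{i,S^{-1}\Z}$ with the full subcategory $C_{iS}^\perp\subset\sT_i$, observing that $R$ restricts to a functor $C_{2S}^\perp\to C_{1S}^\perp$ while $L_{S^{-1}\Z}$ is the composite of $L$ with the left adjoint $f$ to the inclusion $C_{2S}^\perp\hookrightarrow\sT_2$, and then verifying the adjunction formula in one line for \emph{all} $X\in C_{1S}^\perp$, $Y\in C_{2S}^\perp$ via $\Hom_{C_{2S}^\perp}(fL(i_1X),Y)\cong\Hom_{\sT_2}(Li_1X,i_2Y)\cong\Hom_{\sT_1}(i_1X,Ri_2Y)=\Hom_{C_{1S}^\perp}(X,R_SY)$. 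This route requires no extension argument and is the cleaner way to finish.
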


\begin{proof} For (1),  it is easy to show that $L$ sends compact objects of $\sT_1$ to compact objects of $\sT_2$ if and only if $R$ preserves small coproducts; we leave the details of the proof to the reader.

For (2), if $C_2$ is a set of compact generators for $\sT_2$, and $A$ is in $C_2$, then for $n\in S$,  $R(A)\otimes^L\Z/n\cong R(A\otimes^L\Z/n)$, hence $R$ maps $C_{2S}$ to $\sT_{1S\text{-tor}}$. As $R$ is compatible with small coproducts, it follows that $R(\sT_{2S\text{-tor}})\subset \sT_{1S\text{-tor}}$. As $L$ is a left adjoint, $L$ is compatible with small coproducts, so the same argument shows that $L(\sT_{1S\text{-tor}})\subset \sT_{2S\text{-tor}}$, giving the induced functors on the localizations
\[
L_{S^{-1}\Z}:\sT_{1S^{-1}\Z}\xymatrix{\ar@<3pt>[r]&\ar@<3pt>[l]}\sT_{2S^{-1}\Z}:R_{S^{-1}\Z}.
\]
To show that $(L_{S^{-1}\Z}, R_{S^{-1}\Z})$ is an adjoint pair, we may replace $\sT_{iS^{-1}\Z}$ with the equivalent full subcategory $C_{iS}^\perp$ of $\sT_i$, $i=1,2$. $R$ maps $C_{2S}^\perp$ to $C_{1S}^\perp$, and $R_{S^{-1}\Z}$ is identified with the functor $R_S:C_{2S}^\perp\to C_{1S}^\perp$ induced from $R$. Letting $f:\sT_2\to C_{2S}^\perp$ be the left adjoint to the inclusion $i_2:C_{2S}^\perp\to \sT_2$, $L_{S^{-1}\Z}$ is identified with the restriction $L_S$ of $f\circ L$ to $C_{1S}^\perp$. Letting $i_1: C_{1S}^\perp\to \sT_1$ be the inclusion, we have
\begin{multline*}
\Hom_{C_{2S}^\perp}(L_S(X), Y)\cong \Hom_{\sT_2}(L(i_1X), i_2Y)\\
\cong \Hom_{\sT_1}(i_1X, R(i_2Y))=\Hom_{C_{1S}^\perp}(X, R_S(Y))
\end{multline*}
for $X\in C_{1S}^\perp$, $Y\in C_{2S}^\perp$.

For (3), as $L$ and $L_{S^{-1}\Z}$ are the same on objects, we need only show that $L_{S^{-1}\Z}$ is fully faithful. Let $X$, $Y$ be objects of $\sT_1$. If $X$ is compact,  then $L(X)$ is compact by assumption, and thus
\begin{multline*}
\Hom_{\sT_{2S^{-1}\Z}}(L_{S^{-1}\Z}(X_{S^{-1}\Z}), L_{S^{-1}\Z}(Y_{S^{-1}\Z}))= \Hom_{\sT_{2S^{-1}\Z}}(L(X)_{S^{-1}\Z}, L(Y)_{S^{-1}\Z})\\
\cong \Hom_{\sT_2}(L(X), L(Y))_{S^{-1}\Z}
\cong  \Hom_{\sT_1}(X, Y)_{S^{-1}\Z}\cong \Hom_{\sT_{1S^{-1}\Z}}(X_{S^{-1}\Z}, Y_{S^{-1}\Z}).
\end{multline*}
Furthermore, as $L_{S^{-1}\Z}$  is a left adjoint, it preserves small coproducts, hence for fixed $Y$, the full subcategory $\sT_{1S^{-1}\Z}^Y$ of $X_{S^{-1}\Z}$ in $\sT_{1S^{-1}\Z}$ such that 
\[
\Hom_{\sT_{2S^{-1}\Z}}(L_{S^{-1}\Z}(X_{S^{-1}\Z}), L_{S^{-1}\Z}(Y_{S^{-1}\Z}))\cong \Hom_{\sT_{1S^{-1}\Z}}(X_{S^{-1}\Z}, Y_{S^{-1}\Z})
\]
is a localizing subcategory of $\sT_{1S^{-1}\Z}$ containing the objects $A_{S^{-1}\Z}$ for $A$ a compact object of $\sT_1$. As these form a set of compact generators for $\sT_{1S^{-1}\Z}$, we see that 
$\sT_{1S^{-1}\Z}^Y=\sT_{1S^{-1}\Z}$, hence $L_{S^{_1}\Z}$ is fully faithful.
\end{proof}

\begin{ex} \label{ex:LocSlice} We consider the example of $i_n:\Sigma^n_T\SH^\eff(k)\xymatrix{\ar@<3pt>[r]&\ar@<3pt>[l]}\SH(k):r_n$. Lemma~\ref{lem:LocAdjoint} gives us the full subcategory $\Sigma^n_T\SH^\eff(k)_{S^{-1}\Z}$ of $\SH(k)_{S^{-1}\Z}$ with inclusion functor $i_{nS^{-1}\Z}$,   the adjoint pair of functors 
\[
 i_{nS^{-1}\Z}:\Sigma^n_T\SH^\eff(k)_{S^{-1}\Z}\xymatrix{\ar@<3pt>[r]&\ar@<3pt>[l]}\SH(k)_{S^{-1}\Z}:r_{nS^{-1}\Z},
 \]
the truncation functor $f_{nS^{-1}\Z}:=i_{nS^{-1}\Z}\circ r_{nS^{-1}\Z}$, and for $\sE\in \SH(k)$, the canonical isomorphism
\[
(f_n\sE)_{S^{-1}\Z}\cong f_{nS^{-1}\Z}(\sE_{S^{-1}\Z}).
\]
In addition, by lemma~\ref{lem:Localization}, $\Sigma^n_T\SH^\eff(k)_{S^{-1}\Z}$ is equal to the localizing subcategory of $\SH^\eff(k)_{S^{-1}\Z}$ generated by the set of compact objects $(\Sigma^q_T\Sigma^\infty_TX_+)_{S^{-1}\Z}$, for $q\ge n$ and $X\in \Sm/k$. The analogous results hold for $\SH_{S^1}(k)$. 
\end{ex}
\end{appendix}

\end{document}